\newtheorem{theorem}{Theorem}[section]
\newtheorem{lemma}[theorem]{Lemma}
\newtheorem{proposition}[theorem]{Proposition}
\newtheorem{corollary}[theorem]{Corollary}
\theoremstyle{definition}
\newtheorem{definition}[theorem]{Definition}
\newtheorem{example}[theorem]{Example}
\newtheorem{remark}[theorem]{Remark}
\def\cc{{\mathbb C}}
\def\zz{{\mathbb Z}}
\def\pp{{\mathbb P}}
\def\Osh{{\mathcal O}}
\def\R{{\mathcal R}}
\def\Sym{\operatorname{Sym}}
\def\Div{\operatorname{Div}}
\def\Cl{\operatorname{Cl}}
\def\Pic{\operatorname{Pic}}
\def\rk{\mbox{rank}}
\renewcommand{\div}{\operatorname{div}}
\begin{document}

\title{Cox rings of extremal rational elliptic surfaces}

\author{Michela Artebani}
\address{
Departamento de Matem\'atica, \newline
Universidad de Concepci\'on, \newline
Casilla 160-C,
Concepci\'on, Chile}
\email{martebani@udec.cl}

\author{Alice Garbagnati}
\address{Dipartimento di Matematica\newline
Universit\`a Statale degli Studi di Milano\newline
via Saldini, 50, I-20133 Milano, Italy}
\email{alice.garbagnati@unimi.it} 

\author{Antonio Laface}
\address{
Departamento de Matem\'atica, \newline
Universidad de Concepci\'on, \newline
Casilla 160-C,
Concepci\'on, Chile}
\email{alaface@udec.cl}

\subjclass[2000]{14J26, 14C20}
\keywords{Cox rings, rational elliptic surface} 
\thanks{The first author has been partially 
supported by Proyecto FONDECYT Regular N. 1110249.
The third author has been partially supported 
by Proyecto FONDECYT Regular N. 1110096}

\begin{abstract}
In this paper we determine a minimal set of generators 
for the Cox rings of extremal rational elliptic surfaces.
Moreover, we develop a technique for computing the 
ideal of relations between them which allows, in all but three 
cases, to provide a presentation of the Cox ring.
\end{abstract}
\maketitle

\section*{Introduction}
Let $X$ be a smooth projective 
rational surface 
over $\cc$ and $\pi: X\to\pp^1$ 
be a jacobian elliptic fibration on $X$.
%which admits a section,
%that is a morphism 
%$\sigma:\pp^1\to X$ such that
%$\pi\circ\sigma=\id$.
In this paper we are interested 
in the Cox ring of  $X$,
which is the $\Pic(X)$-graded ring 
defined as follows (see section 1):
\[
\R(X):=\bigoplus _{[D]\in \Pic(X)}H^0(X,\mathcal O_X(D)).
\]
By~\cite{ArLa} the Cox ring
$\R(X)$ is a finitely generated 
algebra if and only if the Mordell-Weil
group of $\pi$ is finite, which means that $\pi$ admits
just a finite number of 
sections. The elliptic fibrations
with such property 
are called {\em extremal}, after
the work of Miranda and Persson
who classified them in~\cite{MiPe}.
The following table describes 
the Kodaira type of the singular fibers  
and the structure of the Mordell-Weil 
group for each such surface.

\begin{table}[h]
\begin{center}
\begin{tabular}{l|l|l||l|l|l}
 Surface & Type &  ${\rm MW}(\pi)$ & Surface & Type & ${\rm MW}(\pi)$\\
 \midrule
 $X_{22}$ & ${\rm II}^*\, {\rm II}$ & $\{0\}$ 
 & $X_{431}$ & ${\rm IV}^*\, {\rm I_3}\, {\rm I_1}$ & $\zz/3\zz$\\
 $X_{211}$ & ${\rm II}^*\, {\rm 2I_1}$ & $\{0\}$ 
 & $X_{222}$ & ${\rm I}_2^*\, {\rm 2I_2}$ & $(\zz/2\zz)^{\oplus 2}$\\
 $X_{411}$ & ${\rm I}_4^*\, {\rm 2I_1}$ & $\zz/2\zz$ 
 & $X_{141}$ & ${\rm I}_1^*\, {\rm I_4}\, {\rm I_1}$ & $\zz/4\zz$\\
 $X_{9111}$ & ${\rm I_9}\, {\rm 3I_1}$ & $\zz/3\zz$ 
 & $X_{6321}$ & ${\rm I_6}\, {\rm I_3}\, {\rm I_2}\, {\rm I_1}$ & $\zz/6\zz$\\
 $X_{33}$ & ${\rm III}^*\, {\rm III}$ & $\zz/2\zz$ 
 & $X_{11}(a)$ & ${\rm 2I}_0^*$ & $(\zz/2\zz)^{\oplus 2}$ \\
 $X_{321}$ & ${\rm III}^*\, {\rm I_2}\, {\rm I_1}$ & $\zz/2\zz$ 
 & $X_{5511}$ & ${\rm 2I_5}\, {\rm 2I_1}$ & $\zz/5\zz$\\
 $X_{8211}$ & ${\rm I_8}\, {\rm I_2}\, {\rm 2I_1}$ & $\zz/4\zz$ 
 & $X_{4422}$ & ${\rm 2I_4}\, {\rm 2I_2}$ & $\zz/4\zz\oplus \zz/2\zz$\\
 $X_{44}$ & ${\rm IV}^*\, {\rm IV}$ & $\zz/3\zz$ 
 & $X_{3333}$ & ${\rm 4I_3}$ & $(\zz/3\zz)^{\oplus 2}$
\end{tabular}
\end{center}
\vspace{3mm}
\caption{Extremal rational elliptic surfaces.}\label{extremal}
\end{table}
\vspace{-0.3cm}
The aim of this paper is to give an explicit presentation 
for the Cox ring of extremal elliptic surfaces, i.e. to provide 
a minimal set of generators for $\R(X)$ and to describe the ideal 
of relations between them.

We now give a short description of the content of the paper.

In the first section we recall the basic properties of 
elliptic fibrations and some preliminary 
results about Cox rings.

In the second section we provide a set of 
generators for the Cox ring 
of any extremal rational elliptic surface.
The elements of such set are the distinguished sections 
introduced in Definition \ref{dis}.
%Such generators are defining sections 
%of: $(-1)$-curves, $(-2)$-curves, 
%smooth fibers of conic bundles having a unique 
%reducible fiber, a smooth fiber of $\pi$ 
%if it has a unique 
%reducible fiber and finally pull-backs 
%of lines through morphisms $X\to \pp^2$ 
%which contract $9$ irreducible curves to one point.
%{\bf a me sembra che nel caso $X_{911}$ le 3-sezioni che generano l'anello di Cox non siano pull back di rette, ma di una curva razionale di grado 4!}

The third section deals with elliptic surfaces 
of complexity one, i.e. carrying 
an action of $\cc^*$. 
These are the surfaces $X_{22}, X_{33}, X_{44}$ 
and $X_{11}(a), a\in \cc-\{0\}$.
The Cox ring of such surfaces 
is computed by means of a technique 
developed in~\cite[\S 3.3]{Hau}.

In the fourth section we provide a 
method for computing the 
ideal of relations of $\R(X)$.
This allows to compute explicitely 
the Cox ring of all extremal rational elliptic 
surfaces except for $X_{8211}, X_{9111}, X_{4422}$.

Finally, in section five we apply the previous results 
to compute the Cox ring of certain generalized del Pezzo surfaces 
which are dominated by extremal rational elliptic surfaces.
\vspace{0.2cm}

\noindent {\em Aknowledgments.} We thank Damiano Testa for several useful discussions.

\section{Basic setup}
\subsection{Elliptic surfaces}
We briefly recall some well known facts about elliptic fibrations, 
a good and recent reference for these results is \cite{SS}.

Let $X$ be a smooth projective surface over $\cc$ and 
$\pi: X\to\pp^1$ be an  {\em elliptic fibration},
that is a morphism whose general fiber  
is a smooth curve of genus one.  
In what follows we will assume $\pi$ to be jacobian, 
i.e. $\pi$ admits a section.
Given a zero section $P_0$, the set of sections 
of  $\pi$ forms a finitely generated abelian group, 
the {\em Mordell-Weil group} of $\pi$, which will be 
denoted by $MW(\mathcal{\pi})$.
%This will be called the zero section in what follows. 
%Fixed one section as a zero, the set of sections 
%is a finitely generated abelian group,
%the {\em Mordell-Weil group} of $\pi$.
%Every elliptic fibration can be regarded as an elliptic curve over the function field of the basis. We will assume $D\simeq \mathbb{P}^1$ with homogeneous coordinate $(\tau:\sigma)$. Under this assumption each elliptic fibration admits a minimal Weierstrass equation of the form
%$$y^2=x^3+A(\tau,\sigma)x+B(\tau,\sigma),\ \ \ A(\tau,\sigma),\ B(\tau,\sigma)\in \C[\tau,\sigma]_{hom},\ \ \deg A(\tau,\sigma)=4m,\ \deg B(\tau,\sigma)=6m
%$$
%for a certain $m\in \N_{>0}$ and where there exists no polynomials $C(\tau,\sigma)$ such that $C(\tau,\sigma)^4|A(\tau,\sigma)$ and $C(\tau,\sigma)^6|B(\tau,\sigma)$. If $S$ is a K3 surface, then $m=2$, i.e. $\deg(A(\tau,\sigma))=8$, $\deg(B(\tau,\sigma))=12$.
%We use the notation $A(\tau)$ and $B(\tau)$ to indicate the polynomials $A(\tau,1)$ and $B(\tau,1)$.\\
%There are a finite number of singular fibers, which are the fibers over the points $\overline{\tau}\in\mathbb{P}^1$ where $\Delta(\overline\tau)=-16(4A^3(\overline\tau)+27B^2(\overline\tau))$ is zero. For each singular fiber $F_{\overline{\tau}}$ of the fibration we will denote by $\delta(F_{\overline{\tau}})$ the multiplicity of zero of $\Delta$ in $\overline{\tau}$.

The singular fibers of an elliptic fibration  
have been classified by Kodaira \cite{K}
according to the types $
{\rm I}_n\ (n\geq 1),  {\rm II}, {\rm III}, {\rm IV}, {\rm II}^*, 
{\rm III}^*, {\rm IV}^*, {\rm I}^*_n\ (n\geq 0).$
The intersection graph of some of the reducible fibers is 
represented in Figure \ref{sing},
where each vertex represents a $(-2)$-curve and 
the number near to it is its multiplicity.
 \begin{figure}[h!!]
 \definecolor{qqqqff}{rgb}{0,0,1}
\definecolor{cqcqcq}{rgb}{1,1,1}
\begin{tikzpicture}[line cap=round,line join=round,>=triangle 45,x=1.0cm,y=1.0cm, scale=0.54]
\draw [color=cqcqcq,dash pattern=on 3pt off 3pt, xstep=2.0cm,ystep=2.0cm] (-7,-8) grid (16,4);
\clip(-7,-8) rectangle (16,4);
\draw (-0.5,2.7) node[anchor=north west] {\tiny{$ \Theta_7$}};
\draw (-0.3,2) node[anchor=north west] {\tiny{$2$}};
\draw (0.6,2.7) node[anchor=north west] {\tiny{$ \Theta_6 $}};
\draw (0.7,2) node[anchor=north west] {\tiny{$4$}};
\draw (1.6,2.7) node[anchor=north west] {\tiny{$ \Theta_5 $}};
\draw (1.4,2) node[anchor=north west] {\tiny{$6$}};
\draw (2.6,2.7) node[anchor=north west] {\tiny{$ \Theta_4 $}};
\draw (2.7,2) node[anchor=north west] {\tiny{$5$}};
\draw (3.6,2.7) node[anchor=north west] {\tiny{$ \Theta_3 $}};
\draw (3.7,2) node[anchor=north west] {\tiny{$4$}};
\draw (4.6,2.7) node[anchor=north west] {\tiny{$ \Theta_2 $}};
\draw (4.7,2.) node[anchor=north west] {\tiny{$3$}};
\draw (5.6,2.7) node[anchor=north west] {\tiny{$ \Theta_1 $}};
\draw (5.7,2) node[anchor=north west] {\tiny{$2$}};
\draw (6.7,2.7) node[anchor=north west] {\tiny{$ \Theta_0 $}};
\draw (6.7,2) node[anchor=north west] {\tiny{$1$}};
\draw (2,1.4) node[anchor=north west] {\tiny{$ \Theta_8 $}};
\draw (1.4,1.4) node[anchor=north west] {\tiny{$3$}};
\foreach \x in {0,1,2,3,4,5,6}
 { \draw [line width=1pt] (\x.2,2)-- (\x.8,2); }
\draw [line width=1pt] (2,1.8)-- (2,1.2);
\draw (-5,2.7) node[anchor=north west] {$ {\rm II}^*\ (E_8)$};
\foreach \x in {0,1,2,3,4,5}
 { \draw [line width=1pt] (\x.2,0)-- (\x.8,0); }
\draw [line width=1pt] (3,-.2)-- (3,-.8);
\draw (-0.4,0.7) node[anchor=north west] {\tiny{$\Theta_0$}};
\draw (-0.3,0) node[anchor=north west] {\tiny{$1$}};
\draw (0.6,0.7) node[anchor=north west] {\tiny{$\Theta_1$}};
\draw (0.7,0) node[anchor=north west] {\tiny{$2$}};
\draw (1.6,0.7) node[anchor=north west] {\tiny{$\Theta_2$}};
\draw (1.7,0) node[anchor=north west] {\tiny{$3$}};
\draw (2.6,0.7) node[anchor=north west] {\tiny{$\Theta_3$}};
\draw (2.4,0) node[anchor=north west] {\tiny{$4$}};
\draw (3.6,0.7) node[anchor=north west] {\tiny{$\Theta_4$}};
\draw (3.7,0) node[anchor=north west] {\tiny{$3$}};
\draw (4.6,0.7) node[anchor=north west] {\tiny{$\Theta_5$}};
\draw (4.7,0) node[anchor=north west] {\tiny{$2$}};
\draw (5.6,0.7) node[anchor=north west] {\tiny{$\Theta_6$}};
\draw (5.7,0) node[anchor=north west] {\tiny{$1$}};
\draw (3,-0.6) node[anchor=north west] {\tiny{$\Theta_7$}};
\draw (2.4,-0.6) node[anchor=north west] {\tiny{$2$}};
\draw (-5,0.7) node[anchor=north west] {${\rm III}^*\ (E_7)$};
\foreach \x in {0,1,2,3}
 { \draw [line width=1pt] (\x.2,-2)-- (\x.8,-2); }
\foreach \x in {-3,-2}
 { \draw [line width=1pt] (2,\x.2)-- (2,\x.8); }
%\draw [line width=1pt] (2,-2)-- (2,-4);
\draw (-0.4,-1.3) node[anchor=north west] {\tiny{$ \Theta_0 $}};
\draw (-0.3,-2) node[anchor=north west] {\tiny{$1$}};
\draw (0.6,-1.3) node[anchor=north west] {\tiny{$ \Theta_1 $}};
\draw (0.7,-2) node[anchor=north west] {\tiny{$ 2 $}};
\draw (1.6,-1.3) node[anchor=north west] {\tiny{$ \Theta_2 $}};
\draw (1.4,-2) node[anchor=north west] {\tiny{$3$}};
\draw (2.6,-1.3) node[anchor=north west] {\tiny{$ \Theta_3 $}};
\draw (2.7,-2) node[anchor=north west] {\tiny{$ 2$}};
\draw (3.6,-1.3) node[anchor=north west] {\tiny{$ \Theta_4 $}};
\draw (3.7,-2) node[anchor=north west] {\tiny{$1$}};
\draw (2,-2.7) node[anchor=north west] {\tiny{$\Theta_5$}};
\draw (1.4,-2.7) node[anchor=north west] {\tiny{$2$}};
\draw (2,-3.6) node[anchor=north west] {\tiny{$ \Theta_6 $}};
\draw (1.4,-3.6) node[anchor=north west] {\tiny{$1$}};
\draw (-5,-1.2) node[anchor=north west] {${\rm IV}^*\ (E_6)$};
\draw (-2,-4.6) node[anchor=north west] {\tiny{$\Theta_0$}};
\draw (-1,-4.6) node[anchor=north west] {\tiny{$1$}};
\draw (-1.1,-5.6) node[anchor=north west] {\tiny{$\Theta_2$}};
\draw (-0.3,-6) node[anchor=north west] {\tiny{$2$}};
\draw (0.6,-5.3) node[anchor=north west]{};
\draw (0.7,-6) node[anchor=north west] {\tiny{$2$}};
\draw (2.2,-5.3) node[anchor=north west] {};
\draw (2.7,-6) node[anchor=north west] {\tiny{$2$}};
\draw (4.1,-5.6) node[anchor=north west] {\tiny{$\Theta_{n+2}$}};
\draw (3.7,-6) node[anchor=north west] {\tiny{$2$}};
\draw (5,-4.6) node[anchor=north west] {\tiny{$\Theta_{n+3}$}};
\draw (4.4,-4.6) node[anchor=north west] {\tiny{$1$}};
\draw (-5.2,-4) node[anchor=north west] {${\rm I}^*_n\ (D_{n+4})$};
\draw [line width=1pt] (-.8,-5.2)-- (-0.2,-5.8);
\draw [line width=1pt] (0.2,-6)-- (.8,-6);
\draw [line width=1pt] (3.2,-6)-- (3.8,-6);
\draw [line width=1pt] (4.2,-5.8)-- (4.8,-5.2);
\draw [line width=1pt] (4.2,-6.2)-- (4.8,-6.8);
\draw [line width=1pt] (-.8,-6.8)-- (-.2,-6.2);
\draw [line width=1pt,dash pattern=on 5pt off 5pt] (1.2,-6)-- (2.8,-6);
\draw (-1.9,-6.7) node[anchor=north west] {\tiny{$\Theta_1$}};
\draw (-1,-6.7) node[anchor=north west] {\tiny{$1$}};
\draw (5,-6.7) node[anchor=north west] {\tiny{$\Theta_{n+4}$}};
\draw (4.4,-6.7) node[anchor=north west] {\tiny{$1$}};
\draw [line width=1pt] (7.2,-2)-- (7.8,-2);
\draw [line width=1pt] (7,-2.2)-- (7,-3.8);
\draw [line width=1pt] (7.2,-4)-- (7.8,-4);
\draw [line width=1pt] (10,-2.2)-- (10,-3.8);
\draw [line width=1pt] (7,-3.8)-- (7,-2.2);
\draw [line width=1pt,dash pattern=on 5pt off 5pt] (8.2,-2)-- (9.8,-2);
\draw [line width=1pt,dash pattern=on 5pt off 5pt] (8.2,-4)-- (9.8,-4);
\draw (6.4,-1.3) node[anchor=north west] {\tiny{$\Theta_0$}};
\draw (7.6,-1.3) node[anchor=north west] {\tiny{$\Theta_1$}};
%\draw (9.7,-1.2) node[anchor=north west] {\small{$\Theta_i$}};
%\draw (10,-3.8) node[anchor=north west] {\small{$\Theta_{i+1}$}};
\draw (6.2,-3.8) node[anchor=north west] {\tiny{$\Theta_{n-1}$}};
\draw (10.2,-1) node[anchor=north west] {${\rm I}_n\ (A_{n-1})$};
\begin{scriptsize}
\fill [color=black] (0,2) circle (3.0pt);
\fill [color=black] (1,2) circle (3.0pt);
\fill [color=black] (2,2) circle (3.0pt);
\fill [color=black] (3,2) circle (3.0pt);
\fill [color=black] (4,2) circle (3.0pt);
\fill [color=black] (5,2) circle (3.0pt);
\fill [color=black] (6,2) circle (3.0pt);
\fill [color=black] (7,2) circle (3.0pt);
\fill [color=black] (2,1) circle (3.0pt);
\fill [color=black] (0,0) circle (3.0pt);
\fill [color=black] (1,0) circle (3.0pt);
\fill [color=black] (2,0) circle (3.0pt);
\fill [color=black] (3,0) circle (3.0pt);
\fill [color=black] (4,0) circle (3.0pt);
\fill [color=black] (5,0) circle (3.0pt);
\fill [color=black] (6,0) circle (3.0pt);
\fill [color=black] (3,-1) circle (3.0pt);
\fill [color=black] (0,-2) circle (3.0pt);
\fill [color=black] (1,-2) circle (3.0pt);
\fill [color=black] (2,-2) circle (3.0pt);
\fill [color=black] (3,-2) circle (3.0pt);
\fill [color=black] (4,-2) circle (3.0pt);
\fill [color=black] (2,-3) circle (3.0pt);
\fill [color=black] (2,-4) circle (3.0pt);
\fill [color=black] (5,-5) circle (3.0pt);
\fill [color=black] (4,-6) circle (3.0pt);
\fill [color=black] (5,-7) circle (3.0pt);
\fill [color=black] (3,-6) circle (3.0pt);
\fill [color=black] (1,-6) circle (3.0pt);
\fill [color=black] (0,-6) circle (3.0pt);
\fill [color=black] (-1,-5) circle (3.0pt);
\fill [color=black] (-1,-7) circle (3.0pt);
\fill [color=black] (7,-2) circle (3.0pt);
\fill [color=black] (7,-4) circle (3.0pt);
\fill [color=black] (8,-2) circle (3.0pt);
\fill [color=black] (8,-4) circle (3.0pt);
\fill [color=black] (10,-2) circle (3.0pt);
\fill [color=black] (10,-4) circle (3.0pt);
\end{scriptsize}
\end{tikzpicture}
\caption{Some reducible fibers of an elliptic fibration}\label{sing}
\end{figure}
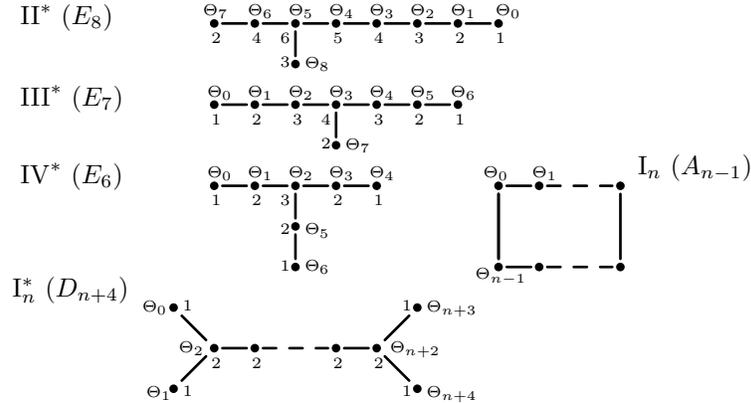

A smooth projective rational surface with
a jacobian elliptic fibration $\pi:X\to \pp^1$ 
is known to be isomorphic to the blow-up 
of the projective plane at nine points, 
eventually infinitely near. 
%(\cite[Theorem 5.6.1]{CD}). 
In fact, $X$ carries a unique elliptic fibration, induced 
by the pencil of cubics in $\pp^2$ through the nine points. 
%induces a unique elliptic fibration $\pi$ on the  
%blow up $b:X\to \pp^2$ of $\mathbb{P}^2$ at the nine base points of the pencil. 
%The elliptic fibration $\pi$ is indeed induced by the anti-canonical divisor 
%$-K_X$, whose class is $3e_0-\sum_{i=1}^9e_i$. 
%In particular, $-K_X$ is the class of the fiber of the elliptic fibration $\pi$.  
%A rational curve on $X$ is a section of the fibration if and only if it is a $(-1)$-curve. 
%Fixed a blow-down morphism $X\to \pp^2$, 
%a natural basis for the Picard group of $X$ 
%is given by $e_0,e_1,\dots,e_9$,
%where $e_0$ is the class of the pull-back  
%of a line in $\pp^2$ and the $e_i$'s are the classes of the exceptional 
%divisors satisfying $e_i\cdot e_j=-\delta_{ij}$
%(observe that $e_1,\dots,e_9$ are not necessarily irreducible 
%if some of the points are infinitely near).
Since the class of a fiber of $\pi$ is the anticanonical class,
the sections of $\pi$ are exactly the $(-1)$-curves.
%The irreducible $e_i$'s, $1\leq i\leq 9$, correspond to $(-1)$-curves, 
%thus to sections of the fibration. 
%The others $e_i$'s, $1\leq i\leq 9$, are sums of $(-2)$-curves, 
%which are irreducible components of reducible fibers, with a $(-1)$-curve.

%\begin{example} Let us consider the pencil of cubics 
%\[
%C_t:\ (x_0-x_2)(x_0-2x_1+x_2)(x_0+2x_1+x_2)+tx_0x_1x_2=0.
%\] 
%The cubic $C_0$ (resp. $C_{\infty}$), corresponding to $t=0$ (resp. $t=\infty$), 
%consists of three lines and has three double points. 
%We observe that  a singular point of the cubic $C_0$ lies on 
%the cubic $C_{\infty}$ and viceversa, 
%so we have two pairs of two infinitely near points. 
%Thus, the elliptic fibration induced by $\mathcal{P}$ 
%has two fibers of type ${\rm I}_4$ over the points $t=0$ and $t=\infty$. 
%The four components of each ${\rm I}_4$ fiber are two $(-2)$-curves 
%coming from the blow up of the infinitely near points and the strict transform of two lines. 
%There are two other reducible fibers, of type ${\rm I}_2$, which correspond to $t=\pm 1/8$, 
%indeed the cubics $C_{\pm 1/8}$ are the union of a conic and a line. 
%Thus the elliptic rational surface associated to the pencil $\mathcal{P}$ is $X_{4422}$. 
%This fibration has 8 sections: 7 of them are the exceptional divisors of the blow up of the 
%base points of the pencil and one of them is the strict transform of the line through 
%the singular point of $C_0$ which lies on $C_\infty$ and the singular point 
%of $C_\infty$ which lies on $C_0$. \end{example}

Rational elliptic surfaces with finite 
Mordell-Weil group 
have been classified by Miranda 
and Persson in~\cite{MiPe}.
The result of the classification 
is given in Table \ref{extremal}. 
In Table \ref{equa} we give 
the equation of a pencil of cubic 
curves giving rise to any such surface. 
%Some of these models are taken from~\cite{CD},
%except for ..
\begin{table}[h!] 
\begin{center}
\begin{tabular}{l|l}
Surface & Pencil of cubics \\
\midrule
$X_{22}$ & $(x_1^3+x_0^2x_2)+t x_2^3=0$\\
$X_{211}$ & $(x_1^3+x_0^2x_2+x_1^2x_2)+tx_2^3=0$\\
$X_{411}$	 & $(x_0^2x_1+x_2^3+x_1^2x_2)+tx_1x_2^2=0$\\
$X_{9111}$	 & $(x_0^2x_1+x_1^2x_2+x_2^2x_0)+tx_0x_1x_2=0$\\
$X_{33}$	 &	$x_0(x_0x_2-x_1^2)+tx_2^3=0$\\
$X_{321}$	 &	$x_0(x_0x_2-x_1^2+x_1x_2)+tx_2^3=0$\\
$X_{8211}$	 &	$(x_0-x_1)(x_0x_1-x_2^2)+tx_0x_1x_2=0$\\
$X_{44}$	 &	$x_1x_2(x_1-x_2)+tx_0^3=0$\\
$X_{431}$	 &	$x_1x_2(x_0+x_1+x_2)+tx_0^3=0$\\
$X_{222}$	 &	$x_0x_1(x_0-x_1)+t(x_1^3+2x_0x_1x_2-2x_1^2x_2-x_0x_2^2+x_1x_2^2)=0$\\
$X_{141}$	 &	$x_2(x_0x_1-x_1^2+x_0x_2)+tx_0x_1(x_0-x_1)=0$\\
$X_{6321}$ &	$(x_0+x_1)(x_1+x_2)(x_0+x_2)+tx_0x_1x_2=0$\\
$X_{11}(a)$ &	$x_1x_2(x_1-x_2)+t(x_1-ax_2)x_0^2=0,\	a \in \cc-\{0,1\}$\\
$X_{5511}$  &	$(x_1+x_2)(x_0+x_1)(x_0+x_1+x_2)+tx_0x_1x_2=0$\\
$X_{4422}$  &	$(x_0-x_2)(x_0-2x_1+x_2)(x_0+2x_1+x_2)+tx_0x_1x_2=0$\\
%$(x_1+x_2)(x_0^2+x_1^2+x_2^2-2x_0x_1+2x_0x_2+2x_1x_2)+tx_0x_1x_2=0$\\
$X_{3333}$  &	$(x_0^3+x_1^3+x_2^3)+tx_0x_1x_2=0$.
\end{tabular}
\end{center}
\vspace{0.3cm}
\caption{Pencils of cubics inducing extremal elliptic surfaces}\label{equa}
\end{table}

In what follows, it will be useful to know the intersection 
matrix for the curves with negative intersection on $X$.
We recall that, in case $MW(\pi)$ 
is finite, two sections of $\pi$ 
do not intersect by \cite[Proposition VII.3.2.]{M}.
 The intersections between the torsion sections 
 and the reducible fibers of extremal rational 
 elliptic surfaces, which can be computed by means of the 
 height formula \cite{Shio}, are summarized
 in Table \ref{int}. 
 In the following we will denote by $\Theta_i^j$ the $i$-th component,
with the notation in Figure \ref{sing}, of the $j$-th reducible fiber 
 (once a numbering of the reducible fibers is chosen). 
In particular $\Theta_0^j$ will denote the component intersecting the zero section.
 If the Mordell--Weil group is isomorphic to 
 $\zz/n\zz\oplus \zz/m\zz$, $n\geq m$, 
 we denote by $P_1$ a generator of $\zz/n\zz$ 
 and by $Q_1$ a generator of $\zz/m\zz$. 
 Moreover,  $P_j$ denotes the section which is $j$ times $P_1$ 
 with respect to the group law of the Mordell--Weil group.
 \begin{table}[h]
\[ 
\begin{array}{c|c|c}
%\hline
\mbox{ Surface } & MW(\pi) &\mbox{ Intersection properties } \\
%\hline
%X_{22}, X_{211}&  \{0\}&\\
\hline
X_{411}& \zz/2\zz&P_1\cdot\Theta_8=1\\
\hline
X_{9111}& \zz/3\zz& P_1\cdot\Theta_3=1 \\
\hline
X_{33}, X_{321}& \zz/2\zz&P_1\cdot\Theta_6^1=P_1\cdot\Theta_1^2=1 \\
\hline
X_{8211}& \zz/4\zz& P_1\cdot\Theta_2^1=P_1\cdot \Theta_1^2=1\\
\hline
X_{44}, X_{431}& \zz/3\zz&P_1\cdot\Theta_6^1=P_1\cdot \Theta_1^2=1\\
\hline
X_{222}& (\zz/2\zz)^{\oplus 2}&P_1\cdot\Theta_1^1=P_1\cdot\Theta_1^2=P_1\cdot\Theta_1^3=1\\
&&Q_1\cdot\Theta_6^1=Q_1\cdot\Theta_1^1=Q_1\cdot\Theta_0^3=1\\
\hline
X_{141}& \zz/4\zz&P_1\cdot\Theta_5^1=P_1\cdot\Theta_1^2=1 \\
\hline
X_{6321} &\zz/6\zz& P_1\cdot \Theta_1^i=1,\ i=1,2,3\\
\hline
X_{11}(a) &(\zz/2\zz)^{\oplus 2}&P_1\cdot\Theta_1^1=P_1\cdot\Theta_1^2=1\\
&&Q_1\cdot\Theta_3^1=Q_1\cdot\Theta_3^2=1\\
\hline
X_{5511}& \zz/5\zz& P_1\cdot\Theta_1^1=P_1\cdot\Theta_2^2=1\\
\hline
X_{4422}& \zz/4\zz\oplus \zz/2\zz& P_1\cdot \Theta_1^i=P_1\cdot\Theta_0^4=1,\ i=1,2,3\\ 
&& Q_1\cdot\Theta_2^1=Q_1\cdot\Theta_0^2=Q_1\cdot\Theta_1^j=1,\ j=3,4\\
\hline
X_{3333}& (\zz/3\zz)^{\oplus 2}&P_1\cdot \Theta_1^i=P_1 \cdot\Theta_0^4=1,\ i=1,2,3\\
&&Q_1\cdot \Theta_1^i=Q_1\cdot \Theta_2^2=Q_1\cdot \Theta_0^3=1,\ i=1,4\\

%\hline
\end{array}
\]
\vspace{0.1cm}
\caption{Intersections between sections and reducible fibers}\label{int}
\end{table}

\subsection{Cox rings}
The Cox ring of a complete 
normal variety with finitely generated 
and free class group $\Cl(X)$ can be defined 
as follows:
\[
\R(X):=\bigoplus_{D\in K} H^0(X,\mathcal O_X(D)),
\]
where $K$ is a subgroup of $\Div(X)$ 
such that the class map $K\to \Cl(X)$ 
is an isomorphism (see \cite[]{ADHL}).
Given a homogeneous element $f$ of $\R(X)$,
we will denote its {\em degree} with $\deg(f)\in \Cl(X)$
and we will say that $f$ is the {\em defining section} 
of a divisor $E$ if  $E=\div(f)+D$, where $D\in K$ is the 
representative for the class $\deg(f)$.
Moreover, we will say that 
$\R(X)$ is {\em generated in degree} $[D]\in \Cl(X)$
if any minimal set of generators 
of $\R(X)$ contains an element of such degree.

If $\R(X)$ is a finitely generated algebra,
then $X$ is called a {\em Mori dream space}.
By \cite[Theorem 4.2, Corollary 5.4]{ArLa} a 
rational elliptic surface $\pi:X\to \pp^1$ 
is a Mori dream space if and only if the Mordell-Weil group 
of $\pi$ is finite, or equivalently if 
the effective cone of $X$ is rational polyhedral.
%Observe that the effective cone 
%of a Mori dream space is the polyhedral cone 
%generated by the degrees of a set of generators 
%of $\R(X)$.
We observe that if $X$ is an extremal elliptic surface 
then the effective cone is generated by the 
classes of the negative curves of $X$,  
i.e. $(-1)$- and $(-2)$-curves, by \cite[Proposition 1.1]{ArLa}.

We now recall a standard result
for proving that $\R(X)$ is not generated 
in degree $[D]$, where $D$ is an effective divisor on $X$.
Let $E_1,E_2$ be two disjoint curves of $X$ 
not linearly equivalent to $D$
and $x_1,x_2\in \R(X)$ be their defining sections.
Then there is 
%an exact sequence of sheaves:
%\[
 % \xymatrix@1@C16pt{
 % 0\ar[r] & \mathcal O_X(D-E_1-E_2)\ar[r]^{f\qquad} & 
 % \mathcal O_X(D-E_1)\oplus \mathcal O_X(D-E_2)\ar[r]^-{g }& 
 % \mathcal O_X(D)\ar[r] & 0,
 % }
%\]
%where $f(s)=(sx_2,-sx_1)$ and $g(s_1,s_2)=s_1x_1+s_2x_2$.
%This induces the 
an exact cohomology sequence:
\[
  \xymatrix@1@C16pt{
  H^0(X, D-E_1)\oplus H^0(X,D-E_2)\ar[r]^-{g^*} & 
 H^0(X,D)\ar[r] & H^1(X, D-E_1-E_2),
%  \hspace{4cm}(s,t)\ar@{|->}[r] & sx_i+tx_j \\
 }
\]
where $g^*(s,t)=sx_1+tx_2$.
\begin{lemma}\label{h1}
If $g^*$ is surjective, then $\R(X)$ 
is not generated in degree $[D]$.
In particular this holds if $h^1(X,D-E_1-E_2)=0$.
\end{lemma}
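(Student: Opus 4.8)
The plan is to read off the statement directly from the exact cohomology sequence displayed just above Lemma \ref{h1}. The key observation is that the image of $g^*$ consists precisely of the sections of $\Osh_X(D)$ that are divisible (inside $\R(X)$) by one of the defining sections $x_1$ or $x_2$; since $E_1,E_2$ are negative curves not linearly equivalent to $D$ (in particular $x_1,x_2$ are non-constant and are not scalar multiples of a degree-$[D]$ section), no element of $H^0(X,D)$ in the image of $g^*$ can be a new generator of the Cox ring in degree $[D]$. So if $g^*$ is surjective, every section in degree $[D]$ already lies in the subalgebra generated by lower-degree elements together with $x_1,x_2$, whence $\R(X)$ is not generated in degree $[D]$.

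Concretely, I would argue as follows. First I would recall that $\R(X)$ being generated in degree $[D]$ means (by the definition given in the Cox rings subsection) that some minimal set of generators contains an element of degree $[D]$; equivalently, $H^0(X,D)$ is \emph{not} spanned by products $fg$ with $f,g$ homogeneous of strictly smaller effective degree (here ``smaller'' in the sense that both $\deg f$ and $\deg g$ are nonzero effective classes whose sum is $[D]$). Next I would note that the classes $[E_1]=\deg(x_1)$ and $[E_2]=\deg(x_2)$ are nonzero effective classes strictly smaller than $[D]$: they are nonzero because $E_i$ is a curve, effective because $E_i$ is an effective divisor, and strictly smaller because $E_i\not\sim D$ while $D-E_i$ is effective whenever $H^0(X,D-E_i)\neq 0$ — and if $H^0(X,D-E_i)=0$ then that summand of the source of $g^*$ vanishes and contributes nothing. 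Then surjectivity of $g^*$ says exactly that every $s\in H^0(X,D)$ can be written as $s=s_1x_1+s_2x_2$ with $s_i\in H^0(X,D-E_i)$, i.e. as a sum of products of homogeneous elements of strictly smaller degree; hence no minimal generating set needs an element of degree $[D]$.

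For the ``in particular'' clause I would simply invoke the exact sequence: the cokernel of $g^*$ injects into $H^1(X,D-E_1-E_2)$, so if the latter vanishes then $g^*$ is automatically surjective and the previous paragraph applies. The disjointness hypothesis $E_1\cap E_2=\emptyset$ is what guarantees that the divisor $D-E_1-E_2$ is the right object and that the Koszul-type sequence $0\to \Osh_X(D-E_1-E_2)\to \Osh_X(D-E_1)\oplus\Osh_X(D-E_2)\to\Osh_X(D)$ (tensored from the resolution of the structure sheaf of $E_1\sqcup E_2$) is exact, giving the displayed long exact cohomology sequence with the stated map $g^*(s,t)=sx_1+tx_2$; I would treat this sequence as given in the excerpt.

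I do not anticipate a serious obstacle here: the content is essentially the tautology ``image of $g^*$ = degree-$[D]$ part of the ideal generated by $x_1$ and $x_2$,'' combined with the bookkeeping that $[E_1],[E_2]$ are legitimate strictly-smaller effective degrees. The only point requiring a little care is the degenerate case where $D-E_i$ fails to be effective for one (or both) $i$, so that the corresponding $H^0$ vanishes; but then surjectivity of $g^*$ forces $H^0(X,D)$ to be spanned by multiples of the remaining $x_j$ (or to be zero), and the conclusion still holds. Writing this out carefully is the ``hard part,'' though it is quite short.
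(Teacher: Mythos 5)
Your argument is correct and is exactly the standard one the paper has in mind: the paper states Lemma~\ref{h1} as a recalled result and gives no proof, so there is nothing to diverge from. Your fleshing-out --- surjectivity of $g^*$ exhibits every element of $H^0(X,D)$ as a sum of products of homogeneous elements of strictly smaller nonzero effective degree (using $E_i\not\sim D$), so no minimal generating set needs an element in degree $[D]$, and the vanishing of $h^1(X,D-E_1-E_2)$ forces surjectivity via the displayed exact sequence --- is precisely what is intended, including the careful handling of the case $H^0(X,D-E_i)=0$.
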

%\begin{proof}
%If $g^*$ is surjective, then any element of $H^0(X,D)$ 
%can be written as a polynomial in $x_1,x_2$ and sections 
%in $H^0(X,D-E_i)$, $i=1,2$. Thus we can choose a minimal 
%generating set of $\R(X)$ which does not contain any element
%of degree $[D]$.
%\end{proof}
We finally state a proposition which is an immediate consequence of 
a result by Harbourne ~\cite[Theorem III.1]{Har}.
Observe that a nef divisor on a rational elliptic 
surface is linearly equivalent to an 
effective divisor by Riemann-Roch theorem.
Moreover, as a consequence of the following result,
the linear system associated to a nef divisor 
is base point free if and only if it has no components 
in its base locus.
\begin{proposition}\label{har}
Let $\pi:X\to \pp^1$ be a rational elliptic
surface and let $D$ be a nef
divisor of $X$.
\begin{enumerate}
\item If $-K_X\cdot D>0$, then $h^1(X,D)=0$.
\item If $-K_X\cdot D\geq 2$, then 
$|D|$ is base point free.
\item If $-K_X\cdot D=1$, then $D\sim-aK_X+P$ 
for some positive integer $a$,
where $P$ is a section of $\pi$;  
in this case $|D|$ contains $P$ in its base locus.
\item  If $-K_X\cdot D=0$, 
then $D\sim -aK_X$ for some non-negative
integer $a$ and $h^1(X,D)=a$.
\end{enumerate}
\end{proposition}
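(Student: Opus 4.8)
The plan is to deduce everything from Harbourne's result \cite[Theorem III.1]{Har}, which classifies sections of line bundles on anticanonical rational surfaces according to the value of $-K_X\cdot D$ for $D$ nef. First I would recall that on a rational elliptic surface $-K_X$ is the class of a fiber, hence nef with $K_X^2=0$, and that $X$ is an anticanonical rational surface in Harbourne's sense (it has an effective anticanonical divisor, the fibers). So each of the four items corresponds to a range of the intersection number $-K_X\cdot D\in\{0,1,2,3,\dots\}$, which is exactly the stratification in Harbourne's theorem.

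For item (i), the statement $-K_X\cdot D>0\Rightarrow h^1(X,D)=0$ for $D$ nef is a direct quotation of the vanishing part of Harbourne's theorem; I would simply cite it, perhaps noting that by Serre duality $h^1(X,D)=h^1(X,K_X-D)$ and $K_X-D$ has negative intersection with the nef-and-big-ish class giving the fibration, but the cleanest route is the direct citation. For item (ii), base point freeness when $-K_X\cdot D\ge 2$: Harbourne's theorem states precisely that a nef divisor with $-K_X\cdot D\ge 2$ on such a surface is base point free (this uses that $D$ is then nef and the linear system separates points sufficiently; one can also invoke Reider-type arguments, but again the citation suffices). For item (iii), when $-K_X\cdot D=1$: Harbourne's classification says $D$ must be of the form $-aK_X+P$ with $P$ a $(-1)$-curve, i.e. a section of $\pi$ since on a rational elliptic surface the sections are exactly the $(-1)$-curves (as recalled in the excerpt). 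Here I would add the short argument that $|D|$ then has $P$ in its base locus: writing $D=-aK_X+P$, the fixed part contains $P$ because $h^0(X,D-P)=h^0(X,-aK_X)=a+1=h^0(X,D)$, the last equality from Riemann--Roch together with $h^1=0$ (item (i), applied to both $D$ and $-aK_X$ if $a>0$; for $a=0$ it is immediate). For item (iv), when $-K_X\cdot D=0$: a nef divisor orthogonal to the nef class $-K_X$ with $K_X^2=0$ must be a non-negative multiple of $-K_X$, i.e. $D\sim -aK_X$ (this is the content of Harbourne's theorem in this case, and also follows from the fact that the fibration class spans the relevant face of the nef cone); then $h^1(X,-aK_X)=a$ follows from the long exact sequence of $0\to \Osh_X(-(a-1)K_X)\to \Osh_X(-aK_X)\to \Osh_F(-aK_X|_F)\to 0$ where $F$ is a smooth fiber and $-aK_X|_F\sim \Osh_F$ has $h^1=1$, giving the recursion $h^1(X,-aK_X)=h^1(X,-(a-1)K_X)+1$ with base case $h^1(X,\Osh_X)=0$.

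The main obstacle is essentially bookkeeping rather than genuine difficulty: one must check that the hypotheses of Harbourne's Theorem III.1 genuinely apply to every rational elliptic surface (anticanonical-ness, the precise nef-ness assumptions on $D$), and one must correctly translate Harbourne's notation and normalization ($(-1)$-curves versus sections, his grading of the cases) into the statement here. The only place where a small independent argument is needed is the base-locus claim in (iii) and the cohomology computation in (iv); both are the short exact sequence / Riemann--Roch computations sketched above, and neither presents a real hurdle. I would therefore present the proposition as a corollary of \cite[Theorem III.1]{Har}, inserting the two short computations for (iii) and (iv) and the remark that sections $=$ $(-1)$-curves on these surfaces.
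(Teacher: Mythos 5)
Your proposal matches the paper's treatment: the paper offers no proof beyond citing Harbourne's Theorem III.1, and your supplementary computations for the base-locus claim in (iii) and the $h^1$ recursion in (iv) are the standard ones and are correct in substance. One small internal inconsistency to fix: in (iii) you justify $h^0(X,-aK_X)=a+1$ by invoking ``$h^1=0$ applied to $-aK_X$'', but by your own item (iv) one has $h^1(X,-aK_X)=a$ for $a>0$; the correct value $h^0(X,-aK_X)=a+1$ instead comes from $\chi(-aK_X)=1$ together with $h^1=a$ and $h^2=0$ (equivalently, all sections of $|{-aK_X}|=|aF|$ are pulled back from $\pp^1$), and with that repair the comparison $h^0(X,D-P)=h^0(X,D)=a+1$ still yields that $P$ is a fixed component.
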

%\begin{lemma}
%Let $A,B$ be two divisors on a smooth rational surface $X$ such that $h^1(A)=h^1(B)=0$. Then $h^1(A+B)=0$.
%\end{lemma}
%\begin{proof}
%Consider the exact sequence
%\[
%0\to \Osh_X\to \Osh_X(B)\to \Osh_B(B)\to 0.
%\]
%Since $X$ is rational, then $h^1(\Osh_X)=h^2(\Osh_X)=0$, thus $h^1(\Osh_B(B))=h^1(\Osh_X(B))=0$.
%Tensoring the same sequence with $\Osh_X(A)$ and looking at the induced sequence in cohomology we obtain
%\[
%H^1(\Osh_X(A))\to H^1(\Osh_X(A+B))\to H^1(\Osh_B(A+B))\to H^2(\Osh_X(A))
%\]
%Since $h^1(A)=h^2(A)=0$, $\deg(A_{|B})\geq 0$ and $h^1(B_{|B})=0$, then 
%$h^1(A+B)=0$.
%\end{proof}

\section{Generators for the Cox rings of extremal\\ rational elliptic surfaces}
In this section we will determine a minimal 
set of generators for any extremal 
rational elliptic surface $X$.
We start proving that
the degrees of the generators of $\R(X)$ 
are at the boundary of the nef cone. 
\begin{proposition}\label{ample}
Let $\pi:X\to\pp^1$ be an extremal rational elliptic 
surface and $D$ be an ample divisor on $X$.
Then $\R(X)$ is not generated 
in degree $[D]$.
\end{proposition}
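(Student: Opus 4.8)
The plan is to reduce the claim to an application of Lemma~\ref{h1}, by writing an ample class $[D]$ as a sum of two smaller effective classes whose difference terms have vanishing $h^1$. More precisely, I would pick two disjoint negative curves $E_1,E_2$ on $X$ (these exist in abundance on an extremal rational elliptic surface — for instance two components of distinct fibers, or a section together with a fiber component it misses) and try to show that $g^*\colon H^0(X,D-E_1)\oplus H^0(X,D-E_2)\to H^0(X,D)$ is surjective. By Lemma~\ref{h1} it suffices to verify $h^1(X,D-E_1-E_2)=0$, and the natural tool for that is Proposition~\ref{har}(1): if $D-E_1-E_2$ is nef and $-K_X\cdot(D-E_1-E_2)>0$ then its $h^1$ vanishes.

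First I would set up notation: since $-K_X$ is the class of a fiber and $D$ is ample, $-K_X\cdot D\geq 1$, with $D^2\geq 1$ as well. The key numerical point is that when $E_1,E_2$ are negative curves with $E_1\cdot E_2=0$, subtracting them drops $D^2$ and $-K_X\cdot D$ only a little: $-K_X\cdot(D-E_1-E_2)=-K_X\cdot D-(-K_X\cdot E_1)-(-K_X\cdot E_2)$, and a $(-2)$-curve is a fiber component so $-K_X\cdot E_i=0$ for it, while a $(-1)$-curve (section) has $-K_X\cdot E_i=1$. So if I choose both $E_1,E_2$ to be $(-2)$-curves contained in fibers, then $-K_X\cdot(D-E_1-E_2)=-K_X\cdot D>0$ automatically, and I only need $D-E_1-E_2$ to be nef. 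Since every extremal rational elliptic surface has a reducible fiber (indeed several), there are always at least two disjoint $(-2)$-curves available, so this choice is always possible.

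The main obstacle is therefore establishing nefness of $D-E_1-E_2$: an ample $D$ might be ``barely'' ample, so that subtracting two curves pushes the class out of the nef cone. To handle this I would argue more carefully, possibly replacing $D$ by a suitable multiple or choosing $E_1,E_2$ adapted to $D$. One clean approach: if $D-E_1-E_2$ fails to be nef, it meets some negative curve $C$ negatively, i.e. $C$ is a fixed component; one then iterates, subtracting off such $C$'s, and uses that after finitely many steps (the process terminates because $D^2$ and the intersection numbers are bounded below) one reaches a nef class $D'$ with the same $h^0$ as $D-E_1-E_2$ and with $-K_X\cdot D'$ still positive (since each subtracted negative curve $C$ has $-K_X\cdot C\geq 0$, and one must check that $-K_X\cdot D'$ does not drop to $0$; if it did, Proposition~\ref{har}(4) would force $D'\sim -aK_X$, and one can check directly that $H^0(X,D-E_1-E_2)\to H^0(X,D)$ is still not surjective in that situation because $x_1,x_2$ vanish on $E_1,E_2$ which are not in the base locus of $|-aK_X|$). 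Alternatively, and perhaps more simply, one shows that $h^1(X,D-E_1-E_2)=h^1(X,D')$ where $D'$ is the nef part, using that removing a fixed negative curve $C$ from a divisor $F$ with $h^1(X,F-C)=h^1(X,F)$ via the sequence $0\to\mathcal O(F-C)\to\mathcal O(F)\to\mathcal O_C(F)\to 0$ together with $h^1(C,\mathcal O_C(F))=0$ (as $C\cong\pp^1$ and $F\cdot C<0$ forces... careful: $F\cdot C<0$ gives $\deg\mathcal O_C(F)<0$ hence $h^0=0$ but $h^1$ may be nonzero) — so the bookkeeping needs the Zariski decomposition of $D-E_1-E_2$ and a dimension count.

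In summary: the skeleton is (i) choose two disjoint $(-2)$-curves $E_1,E_2$ inside reducible fibers, with defining sections $x_1,x_2$; (ii) pass to the Zariski/nef decomposition of $D-E_1-E_2=D'+N$ with $D'$ nef and $N$ an effective combination of negative curves; (iii) check $-K_X\cdot D'>0$ — here is where care is needed, ruling out the degenerate case $D'\sim -aK_X$ by noting it would contradict $E_i\notin\Bs|D-E_1-E_2|$; (iv) conclude $h^1(X,D')=0$ by Proposition~\ref{har}(1), hence $h^1(X,D-E_1-E_2)=0$; (v) apply Lemma~\ref{h1} to deduce $\R(X)$ is not generated in degree $[D]$. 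The hard part is step (iii), the numerical control on $-K_X\cdot D'$ after removing fixed components.
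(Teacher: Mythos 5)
Your overall strategy coincides with the paper's: pick two disjoint negative curves $E_1,E_2$, prove $h^1(X,D-E_1-E_2)=0$, and conclude via Lemma~\ref{h1}. The gap is in your repair for the possible failure of nefness of $D-E_1-E_2$, which is exactly where the content of the proof lies. Stripping fixed components to reach the nef part $D'$ controls $h^1$ in the wrong direction: for a fixed component $C$ of $F$ with $F\cdot C<0$, the sequence $0\to\Osh(F-C)\to\Osh(F)\to\Osh_C(F)\to 0$ gives $h^0(\Osh_C(F))=0$ and hence only $h^1(X,F-C)\le h^1(X,F)$, so $h^1(X,D')=0$ says nothing about $h^1(X,D-E_1-E_2)$ unless every stripped $C$ has $F\cdot C=-1$. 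You notice the offending term $h^1(C,\Osh_C(F))$ yourself but leave it unresolved, and it is genuinely nonzero in general (e.g.\ $h^1(X,2E)=4$ for a $(-2)$-curve $E$, while the nef part of $2E$ is $0$). The paper avoids the problem by choosing $E_1,E_2$ so that $D-E_1-E_2$ is nef outright: two $(-2)$-curves either lying in distinct fibers and not meeting a common section, or lying in the same fiber at distance at least two in the dual graph. With that choice no negative curve meets $E_1+E_2$ with total multiplicity $\ge 2$, so $(D-E_1-E_2)\cdot C\ge D\cdot C-1\ge 0$ by ampleness (and $(D-E_1-E_2)\cdot E_i=D\cdot E_i+2>0$), whence $h^1=0$ by Proposition~\ref{har}(i) since $-K_X\cdot(D-E_1-E_2)=-K_X\cdot D>0$.

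Moreover, no choice of disjoint $(-2)$-curves can make this work uniformly: on $X_{3333}$ every pair of disjoint $(-2)$-curves lies in two distinct ${\rm I}_3$ fibers and there is always a section $P$ meeting both, so for an ample $D$ with $D\cdot P=1$ the divisor $D-E_1-E_2$ really is not nef. The paper treats $X_{3333}$ separately, showing that $B:=-K_X+D-E_1-E_2$ is nef and big and invoking Kawamata--Viehweg to get $h^1(X,K_X+B)=h^1(X,D-E_1-E_2)=0$. Your sketch does not anticipate that a second mechanism is needed here, and the fallback you offer (ruling out $D'\sim -aK_X$ plus a dimension count) does not supply one.
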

\begin{proof}
If $X\not=X_{3333}$, then it 
contains two $(-2)$-curves 
$E_1,E_2$ which either 
belong to distinct fibers of $\pi$ 
and do not intersect the same section, 
or belong to the same fiber and have distance at least two 
(in the intersection graph).
The existence of such pair of curves can be easily 
verified looking at Table \ref{extremal} and  Table \ref{int}.
The divisor $D-E_1-E_2$ is nef and 
$-K_X\cdot (D-E_1-E_2)=-K_X\cdot D>0$. 
Thus $h^1(D-E_1-E_2)=0$ by Proposition \ref{har}.

If $X=X_{3333}$ let $E_1,E_2$ be two disjoint
$(-2)$-curves and let  $B:=-K_X+D-E_1-E_2$.
Since the only curve of negative self-intersection  
intersecting both $E_1$ and $E_2$ is a section 
and $D$ is ample, then $B$ is nef.
Moreover, since $D-E_1$ is nef and 
$-K_X-E_2\sim E_2'+E_2''$, where 
$E_2'$ and $E_2''$ are the remaining 
components of the reducible fiber 
containing $E_2$, then
$B^2=(D-E_1)^2+2(D-E_1)\cdot (E_2'+E_2'')
+(E_2'+E_2'')^2\geq 4-2>0$, so that
$B$ is big.
%Since $E_i$, $i=1,2$ are components of two distinct reducible fibers, one can always assume $DE_i\leq D(-K_X)/3$, $i=1,2$, which implies $B$ is also big. 
Thus $h^1(D-E_1-E_2)=0$ by the
Kawamata-Viehweg  vanishing theorem.
In both cases the statement follows 
from Lemma \ref{h1}.
\end{proof}

%\subsection{Negative curves}
\begin{definition}
A connected and reduced curve $E$ of $X$ is a 
{\em generalized $(-1)$-curve} if there exists 
a morphism to a smooth surface contracting $E$ to a point,
a {\em generalized $(-2)$-curve} if the intersection 
graph on its irreducible components is a Dynkin diagram.
\end{definition}

It follows easily from the definition and Castelnuovo's criterion \cite[Theorem 5.7]{Ha} 
that a generalized $(-1)$-curve 
is either a $(-1)$-curve or a chain of $(-2)$-curves 
with a $(-1)$-curve at one extreme.

\begin{lemma}\label{N} 
%Let $D$ be an effective divisor with $D^2>0$ 
%and let $D^{\perp}$ be the set of negative curves 
%orthogonal to $D$.
%If $D^{\perp}$ is not empty, then 
%any connected component of $D^{\perp}$  
Let $N$ be a connected and reduced divisor of $X$
such that the intersection 
form on the irreducible components of its support 
is negative definite. 
Then $N$ is either a generalized $(-2)$-curve 
or a generalized $(-1)$-curve.
\end{lemma}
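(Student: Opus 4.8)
The plan is to analyze the intersection graph $\Gamma$ on the irreducible components $E_1,\dots,E_r$ of $\operatorname{Supp}(N)$, distinguishing cases according to the self-intersections occurring among these components. Since $N$ is a connected reduced divisor on a rational elliptic surface and all irreducible curves on $X$ have self-intersection $\geq -2$ (as $-K_X$ is nef and effective, a curve $C$ with $C^2\leq -3$ would violate adjunction together with $C\cdot(-K_X)\geq 0$), each $E_i$ is a $(-2)$-curve, a $(-1)$-curve, a $0$-curve, or a curve of positive self-intersection. The first step is to rule out components with $E_i^2\geq 0$: if some $E_i^2\geq 0$ then, because $N$ is connected with negative definite intersection form, a short computation with the sub-divisor supported on $E_i$ together with a neighbour already produces a non-negative vector in the intersection form, contradicting negative definiteness (more precisely, one shows the $1\times 1$ or $2\times 2$ principal minor fails to be negative definite). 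Hence every component is either a $(-1)$-curve or a $(-2)$-curve.

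The second step is to bound the number of $(-1)$-curves among the $E_i$. Suppose two distinct components $E_i,E_j$ are $(-1)$-curves. If they are disjoint, the $2\times 2$ diagonal block is $-\mathrm{Id}$, which is fine, but one must then track how they connect through the rest of $\Gamma$; the cleaner route is to observe that a connected negative definite configuration containing two $(-1)$-curves, or one $(-1)$-curve $E_i$ with $E_i\cdot(\sum_{j\neq i}E_j)\geq 2$, can be contradicted by exhibiting an explicit effective combination $F=\sum a_\ell E_\ell$ with $F^2\geq 0$. Concretely, if $E_0$ is a $(-1)$-curve meeting the $(-2)$-chain $E_1,\dots,E_k$ with $E_0\cdot E_1=1$ and the chain extends further or branches, one checks $F=\sum_{i=0}^k E_i$ has $F^2=-1-2k+2k=-1$, still negative, so one pushes the argument: the key inequality is that for a connected negative definite graph, removing a vertex keeps things negative definite, and a $(-1)$-vertex with valence $\geq 2$ in the graph forces a non-negative minor. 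This shows $\Gamma$ contains at most one $(-1)$-curve, and if it contains one it is an endpoint (valence $\leq 1$).

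The third step handles the two resulting cases. If all components are $(-2)$-curves, then the intersection form is a negative definite graph with all diagonal entries $-2$, i.e. (minus) a Cartan matrix of a simply-laced root system, so the graph is a Dynkin diagram of type $A$, $D$ or $E$ by the classical classification of negative definite graphs with $-2$ on the diagonal; thus $N$ is a generalized $(-2)$-curve by definition. If exactly one component $E_0$ is a $(-1)$-curve, it is an endpoint attached to a connected configuration of $(-2)$-curves $N'=N-E_0$; one checks $N'$ is itself connected and negative definite, hence a Dynkin configuration, and since $E_0$ meets it in a single point the remark preceding the lemma (generalized $(-1)$-curves are $(-1)$-curves or chains of $(-2)$-curves with a $(-1)$-curve at one end) shows the configuration must be linear — a branching would again produce a non-negative vector — so $N$ is a chain of $(-2)$-curves with a $(-1)$-curve at one extreme, i.e. a generalized $(-1)$-curve.

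The main obstacle is the second step: cleanly excluding the cases of two $(-1)$-curves, or a $(-1)$-curve of high valence, or a $(-1)$-curve attached to a branched $(-2)$-configuration, without doing an ad hoc case analysis. The slick way is to use the standard fact that a connected graph with integer weights $E_i^2$ whose intersection form is negative definite, and which contains a vertex of weight $-1$, contracts (Castelnuovo) to a strictly smaller such graph; iterating this contraction, together with the observation that contraction can only decrease self-intersections, forces the original configuration to be a chain with the $(-1)$-curve at the end. I would phrase the argument via this contraction/induction on $r=\#\{$components$\}$, using negative definiteness to guarantee that after contracting a $(-1)$-curve the new self-intersection numbers stay $\geq -2$ and the form stays negative definite, and invoking the base case $r=1$ directly. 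Lemma \ref{h1} and Proposition \ref{har} are not needed here; only adjunction on $X$, Castelnuovo's criterion, and the classification of Dynkin diagrams enter.
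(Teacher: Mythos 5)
Your overall strategy is sound and genuinely different from the paper's. The paper argues geometrically: since $N$ is negative definite, its $(-2)$-components form a proper subset of a Kodaira fiber (hence automatically a Dynkin configuration), and the case where $N$ contains a $(-1)$-curve is excluded by exhibiting two explicit isotropic effective classes, namely $(2E+F+G)^2=0$ for a generalized $(-1)$-curve $E$ meeting two further $(-2)$-curves $F,G$ simply, and $(E+E'+F)^2=0$ for two $(-1)$-curves joined by a chain $F$ of $(-2)$-curves. You instead work purely lattice-theoretically: adjunction plus nefness of $-K_X$ gives $C^2\ge -2$ for every component, negative definiteness of the $1\times 1$ minors rules out $C^2\ge 0$, the simply-laced ADE classification handles the all-$(-2)$ case, and a Castelnuovo contraction induction handles the $(-1)$ case. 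These ingredients are all correct, and the contraction induction does close the argument if carried out: the pushed-forward classes $C_i+(C_i\cdot E_0)E_0$ span a sublattice of a negative definite lattice, so negative definiteness is preserved, and a $(-1)$-vertex of valence at least $2$, or one meeting a neighbour with multiplicity at least $2$, produces after contraction a vertex of non-negative self-intersection, a contradiction.

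The one genuine soft spot is your second step, which you leave as a gesture after your first concrete attempt fails. The reduced cycle $F=\sum_{i=0}^k E_i$ indeed has $F^2=-1$ and proves nothing; the isotropic vectors that do the job require coefficient $2$ on the generalized $(-1)$-part, exactly as in the paper: if $E$ is the reduced sum of the $(-1)$-curve together with the $(-2)$-curves already absorbed into the chain, and $F,G$ are two further $(-2)$-curves each meeting $E$ once, then $(2E+F+G)^2=2\,F\cdot G\ge 0$. This single vector simultaneously excludes a $(-1)$-vertex of valence at least two, a branch point anywhere along the chain, and (together with $(E+E')^2\ge 0$ for two meeting $(-1)$-curves, or $(E+E'+F)^2=0$ for two $(-1)$-curves joined by a chain) a second $(-1)$-component. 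You should either write these vectors down or fully run the contraction induction; as written, the assertions ``forces a non-negative minor'' and ``a branching would again produce a non-negative vector'' are precisely the content that needs proving. Note also that your route never needs the disjointness of torsion sections that the paper invokes, since two meeting $(-1)$-curves already give $(E+E')^2\ge 0$ --- a small gain in generality.
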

\begin{proof}
If $N$ only contains $(-2)$-curves,
then  it is a generalized 
$(-2)$-curve since its support 
is properly contained 
in a fiber of $\pi$.

Now assume that $N$ contains a $(-1)$-curve.
If $N$ contains a generalized 
$(-1)$-curve $E$ and two   
$(-2)$-curves $F,G$ %(which are necessarily disjoint) 
intersecting $E$ simply, 
then $(2E+F+G)^2=0$, giving a contradiction.
Moreover, if $E,E'$ are two $(-1)$-curves 
in $N$ and $F$ a chain of $(-2)$-curves intersecting 
both simply, then $(E+E'+F)^2=0$, again
giving a contradiction. We recall that $E$ and $E'$ 
are disjoint since they are torsion sections.
This implies that $N$ is a generalized $(-1)$-curve.
\end{proof}

\subsection{Conic bundles}  
Let $D$ be a {\em conic bundle}, that is an effective 
divisor such that $D^2=0$, $-K_X\cdot D=2$ and 
$|D|$ is base point free, so that  the morphism 
associated to $|D|$ is a rational fibration.
We now identify the conic bundles 
on $X$ giving generators for the Cox ring.

\begin{proposition}\label{proposition: D2=0}
Let $D$ be an effective divisor
on $X$ such that $D^2=0$ and $|D|$ is base point free.
Then either $D\sim aF$, $a>0$, where $F$ is a fiber of $\pi$,
or $D$ is a conic bundle.
Moreover, $\R(X)$ is generated
in degree $[D]$ exactly in the following cases:
\begin{table}[h!]
\begin{tabular}{l|l}
 $X$ & $D$\\
 \midrule
 $X_{22}, X_{211}, X_{411},X_{9111}$ & $F$\\
 \midrule
 $X_{22},X_{211}$ & $2P_0+2\sum_{i=0}^5\Theta_i+\Theta_6+\Theta_8$\\
 \midrule
 $X_{411}$ & $P_0+P_1+\Theta_0+\sum_{i=2}^6\Theta_i+\Theta_8$, \\
 & $2P_0+2\Theta_0+2\sum_{i=2}^6\Theta_i+\Theta_7+\Theta_8$, \\
 & $2P_1+2\sum_{i=2}^6\Theta_i+2\Theta_8+\Theta_0+\Theta_1$\\
 \midrule
 $X_{8211}$ & $P_0+P_1+\sum_{i\not=1}\Theta_i^1$,\\
& $P_0+P_3+\sum_{i\not=7}\Theta_i^1$,\\
 & $P_1+P_2+\sum_{i\not=3}\Theta_i^1$,\\
 & $P_2+P_3+\sum_{i\not=5}\Theta_i^1$\\
 \midrule
 $X_{9111}$ & $P_0+P_2+\sum_{i=0}^6\Theta_i$,\\
& $P_0+P_1+\sum_{i=3}^8\Theta_i+ \Theta_0$,\\
& $P_1+P_2+\sum_{i=0}^3\Theta_i+\sum_{i=6}^8\Theta_i$\\
\midrule
 $X_{33},X_{321}$ & $P_0+P_1+\sum_{i=0}^6\Theta_i^1$
\end{tabular}
\end{table}

where the notation is as in Table \ref{int}.
% where $P_0$ is the zero section, 
% $P_1$ is a generator of $MW(\pi)$ and 
% $P_j$ is the sum of $S$ $j$-times with respect to the group law. 
\end{proposition}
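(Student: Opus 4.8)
The plan is to split the statement into its two parts. For the dichotomy, I would start from the classification of base-point-free divisors with $D^2=0$ on a rational elliptic surface. Since $D^2=0$ and $D$ is effective and nef (base point free implies nef), $-K_X\cdot D\geq 0$, and by Proposition \ref{har}(iv) the case $-K_X\cdot D=0$ forces $D\sim -aK_X=aF$ with $a\geq 0$; since $D$ is effective and nonzero, $a>0$. If $-K_X\cdot D=1$, Proposition \ref{har}(iii) gives $D\sim -aK_X+P$ with $P$ a section contained in the base locus, contradicting base-point-freeness. If $-K_X\cdot D\geq 3$, then by Riemann--Roch $h^0(D)=\tfrac12 D\cdot(D-K_X)+1\geq \tfrac52+1$, the morphism $\varphi_{|D|}$ maps $X$ onto a curve (as $D^2=0$), the general fiber $C$ of $\varphi_{|D|}$ satisfies $D\cdot C=0$ and $-K_X\cdot C>0$, so $C$ is a rational curve with $C^2<0$ forcing $C$ to be one of the finitely many negative curves, and a short numerical argument (comparing $-K_X\cdot D$ with the multiplicity) shows $D\sim aF$ or forces $-K_X\cdot D=2$. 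So $-K_X\cdot D=2$, which is exactly the definition of a conic bundle, and this proves the dichotomy.

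For the second part I would treat the ``generated in degree $[D]$'' question by a case analysis following the rows of the table. For the fiber class $F$ on $X_{22},X_{211},X_{411},X_{9111}$: here $\MW(\pi)$ is $\{0\}$ or $\zz/2\zz$ or $\zz/3\zz$ and one checks directly, using Lemma \ref{h1}, that there is no pair of disjoint negative curves $E_1,E_2$ (not equivalent to $F$) with $h^1(F-E_1-E_2)=0$; equivalently, any way of writing a general member of $|F|$ as a sum of negative curves necessarily involves a full fiber component set, so $F$ cannot be obtained as a product of sections of smaller degree — I would verify $h^0(F-E_1-E_2)=0$ for all relevant $E_i$ using Proposition \ref{har} and the intersection data of Tables \ref{extremal} and \ref{int}. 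For the conic bundles listed, the strategy is the reverse: each listed $D$ is a conic bundle whose general member is irreducible (a smooth rational curve), hence $D$ is \emph{not} a sum $E_1+E_2$ of two negative curves, and more precisely for every pair of disjoint negative curves $E_1,E_2$ one has $h^0(X,D-E_1-E_2)=0$, so $g^*$ in Lemma \ref{h1} is the zero map and cannot be surjective onto $H^0(X,D)\neq 0$; thus $\R(X)$ must have a generator in degree $[D]$. Conversely, for a conic bundle $D$ \emph{not} in the table I would exhibit an explicit pair $E_1,E_2$ of disjoint negative curves with $D-E_1-E_2$ effective (indeed with $h^1(D-E_1-E_2)=0$ via Proposition \ref{har}(i) applied to the nef divisor $D-E_1-E_2$, whose intersection with $-K_X$ is still $2>0$ unless it drops, in which case one uses part (iv)), and then Lemma \ref{h1} shows $\R(X)$ is not generated in that degree.

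The bookkeeping step — enumerating, on each of the surfaces carrying a torsion section, all conic bundle classes up to the action of $\Aut(X)$ and the Weyl group, and for each deciding whether it is in the table — is the main obstacle, since it requires knowing all negative curves and their intersection pattern and then solving the effectivity problem $h^0(D-E_1-E_2)=0$ for each candidate pair. I expect the key lemma underlying this to be: a conic bundle $D$ gives a generator iff its generic fiber is irreducible, equivalently iff $D$ admits no effective decomposition $D\sim E_1+E_2$ with $E_1,E_2$ disjoint negative curves; the list in the table is then exactly the set of such $D$ up to symmetry. I would organize the proof by first establishing this lemma in general (using $D^2=0$, $-K_X\cdot D=2$, and that reducible members of a base-point-free pencil of rational curves split into two disjoint $(-1)$- or generalized $(-1)$-curves by Lemma \ref{N}), and then going through the surfaces one at a time, which makes the verification routine given Tables \ref{extremal}, \ref{equa} and \ref{int}.
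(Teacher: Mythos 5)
There is a genuine gap, and it sits exactly at the ``key lemma'' you propose to build the case analysis on. You claim that a conic bundle $D$ contributes a generator iff its generic fiber is irreducible, equivalently iff there is no decomposition $D\sim E_1+E_2$ into disjoint negative curves, and that for the listed $D$ one has $h^0(X,D-E_1-E_2)=0$ for \emph{every} such pair. This is false: the generic fiber of a conic bundle is always an irreducible smooth rational curve, and every conic bundle on these surfaces has at least one reducible fiber (the Picard number is $10$, so a fibration with all fibers irreducible is impossible). Taking $E_1,E_2$ to be the two multiplicity-one $(-1)$-curves of such a reducible fiber --- they are sections of $\pi$, hence disjoint --- the difference $D-E_1-E_2$ is an effective chain of $(-2)$-curves, so $h^0(X,D-E_1-E_2)>0$. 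Thus your criterion would either declare all conic bundles generators or none, contradicting the table. The correct dividing line, which is what the paper uses, is the number of reducible fibers of the conic bundle: $|D|$ is a pencil, its decomposable sections are exactly those vanishing on reducible members, so $H^0(X,D)$ is spanned by products of lower-degree distinguished sections iff there are at least two reducible fibers; a generator is needed precisely when the reducible fiber is unique (and the same criterion, applied to $\pi$ itself, handles the row $[D]=[F]=-K_X$). The table is then the list of conic bundles with a unique reducible fiber, which the paper makes enumerable by classifying the possible reducible fibers via Lemma \ref{N} (a chain with two $(-1)$-curves at the ends, or a $D$-type graph with a doubled $(-1)$-curve).

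Two further points. First, Lemma \ref{h1} is a one-way implication: non-surjectivity of $g^*$ for one pair $(E_1,E_2)$ does not show that $\R(X)$ is generated in degree $[D]$; for the positive direction you must argue that the full span of decomposable elements is a proper subspace of $H^0(X,D)$, which is where the ``unique reducible fiber'' condition enters. Second, in your converse direction the vanishing $h^1(X,D-E_1-E_2)=0$ fails: $-K_X\cdot(D-E_1-E_2)=0$ and the chain of $(-2)$-curves has $h^1=1$, so Proposition \ref{har}(i) does not apply; instead one checks directly that two distinct reducible fibers span the pencil, making $g^*$ surjective. Finally, your treatment of the dichotomy for $-K_X\cdot D\geq 3$ is left as ``a short numerical argument''; the paper disposes of this at once, since adjunction with $D^2=0$ gives $-K_X\cdot D=2-2p_a(D)\in\{0,2\}$, with $D\sim aF$ in the first case by the Hodge index theorem.
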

\begin{proof}
Since $D^2=0$ and $-K_X$ is nef, 
then by adjunction formula either 
$-K_X\cdot D=0$ and $g(D)=1$ or 
$-K_X\cdot D=2$ and $g(D)=0$. 
In the first case, by Hodge index theorem, 
$D\sim aF$, where $F$ is a fiber of $\pi$ and 
$a$ is a positive integer.
 
 Observe that $\R(X)$ has a generator in degree 
 $[F]=-K_X$ if and only if $H^0(X,-K_X)$ is not 
 generated by sections of negative curves, i.e. 
 if $\pi$ has a unique reducible fiber.
 Similarly, if $D$ is a conic bundle, $\R(X)$ 
 has a generator in degree $[D]$ if and only 
 if the rational fibration associated to $|D|$ 
 has a unique reducible fiber 
 (observe that any conic bundle has 
 at least a reducible fiber by Kleiman criterion). 
%  \begin{center}
 %\begin{figure}[h]
 % \tiny{ 
% \begin{tikzpicture}
 %\draw (0.32,0.1)-- (2.32,0.1);
 %\draw[dashed] (2.32,0.1)--(4.32,0.1);
 %\draw (4.32,0.1)-- (5.32,0.1);
 % \fill [color=red] (0.32,0.1) circle (2.5pt);
% \draw (0.3,0.5) node {$1$};
 %\fill [color=blue] (1.32,0.1) circle (2.5pt);
 %\draw (1.3,0.5) node {$1$};
 %\fill [color=blue] (2.32,0.1) circle (2.5pt);
 %\draw (2.3,0.5) node {$1$};
 %\fill [color=blue] (4.32,0.1) circle (2.5pt);
% \draw (4.3,0.5) node {$1$};
% \fill [color=red] (5.32,0.1) circle (2.5pt);
 %\draw (5.3,0.5) node {$1$};
 %\end{tikzpicture}
%}
 %\tiny{ 
% \begin{tikzpicture}
% \draw (0.32,0.1)-- (2.32,0.1);
% \draw[dashed] (2.32,0.1)--(4.32,0.1);
 %\draw (4.32,0.1)-- (6.32,0.1); 
 %\draw (5.32,0.1)-- (5.32,-0.88);
% \fill [color=red] (0.32,0.1) circle (2.5pt);
% \draw (0.35,0.5) node {$2$};
 %\fill [color=blue] (1.32,0.1) circle (2.5pt);
  % \draw (1.35,0.5) node {$2$};
 %\fill [color=blue] (2.32,0.1) circle (2.5pt);
% \draw (2.35,0.5) node {$2$};
 %\fill [color=blue] (4.32,0.1) circle (2.5pt);
 %\draw (4.35,0.5) node {$2$};
% \fill [color=blue] (5.32,0.1) circle (2.5pt);
 %\draw (5.35,0.5) node {$2$};
 %\fill [color=blue] (6.32,0.1) circle (2.5pt);
 %\draw (6.35,0.5) node {$1$};
 %\fill [color=blue] (5.32,-0.88) circle (2.5pt);
%\draw (5.3,-1.26) node {$1$};
 %\end{tikzpicture}
 %}
% \caption{Reducible fibers of conic bundles}\label{cbred}
%\end{figure}
%\end{center}

\begin{figure}[h]
\newcommand{\ndw}
 {\node[circle,draw, fill=white,inner sep=0pt,outer sep=0pt,minimum size=3pt]}
\begin{tikzpicture}[line cap=round,line join=round,>=triangle 45,x=1.0cm,y=1.0cm, scale=0.62]
\foreach \x in {-1,0,3,4}
 { \draw [line width=1pt] (\x+0.2,-6)-- (\x+0.8,-6); }
\draw [line width=1pt,dash pattern=on 5pt off 5pt] (1.2,-6)-- (2.8,-6);
\draw (-1.5,-6) node[anchor=north west] {{\tiny $-1$}};
\draw (-1.3,-5.3) node[anchor=north west] {{\tiny $1$}};
\draw (-0.3,-5.3) node[anchor=north west] {{\tiny $1$}};
\draw (0.7,-5.3) node[anchor=north west] {{\tiny $1$}};
\draw (2.7,-5.3) node[anchor=north west] {{\tiny $1$}};
\draw (3.7,-5.3) node[anchor=north west] {{\tiny $1$}};
\draw (4.7,-5.3) node[anchor=north west] {{\tiny $1$}};
\draw (4.5,-6) node[anchor=north west] {{\tiny $-1$}};
\draw (5,-6.7) node[anchor=north west] {};
\draw (4.4,-6.7) node[anchor=north west] {};
\fill [color=black] (4,-6) circle (3.0pt);
\ndw at (5,-6) {};
%\fill [color=gray] (5,-6) circle (3.0pt);
\fill [color=black] (3,-6) circle (3.0pt);
\fill [color=black] (1,-6) circle (3.0pt);
\fill [color=black] (0,-6) circle (3.0pt);
\ndw at (-1,-6) {};
%\fill [color=gray] (-1,-6) circle (3.0pt);
\end{tikzpicture}
\hspace{0.7cm}
\begin{tikzpicture}[line cap=round,line join=round,>=triangle 45,x=1.0cm,y=1.0cm, scale=0.62]
\foreach \x in {0,3,4}
 { \draw [line width=1pt] (\x+0.2,-6)-- (\x+0.8,-6); }
\draw [line width=1pt] (-.8,-5.2)-- (-.2,-5.8);
%\draw [line width=1pt] (0,-6)-- (1,-6);
%\draw [line width=1pt] (3,-6)-- (5,-6);
\draw [line width=1pt] (-.8,-6.8)-- (-.2,-6.2);
\draw [line width=1pt,dash pattern=on 5pt off 5pt] (1.2,-6)-- (2.8,-6);
\draw (4.7,-5.3) node[anchor=north west] {{\tiny $2$}};
\draw (4.6,-6) node[anchor=north west] {{\tiny $-1$}};
\draw (3.7,-5.3) node[anchor=north west] {{\tiny $2$}};
\draw (2.7,-5.3) node[anchor=north west] {{\tiny $2$}};
\draw (0.7,-5.3) node[anchor=north west] {{\tiny $2$}};
\draw (-0.2,-5.3) node[anchor=north west] {{\tiny $2$}};
\draw (-1.3,-4.3) node[anchor=north west] {{\tiny $1$}};
\draw (-1.3,-6.3) node[anchor=north west] {{\tiny $1$}};
\fill [color=black] (4,-6) circle (3.0pt);
\ndw at (5,-6) {};
%\fill [color=gray] (5,-6) circle (3.0pt);
\fill [color=black] (3,-6) circle (3.0pt);
\fill [color=black] (1,-6) circle (3.0pt);
\fill [color=black] (0,-6) circle (3.0pt);
\fill [color=black] (-1,-5) circle (3.0pt);
\fill [color=black] (-1,-7) circle (3.0pt);
\end{tikzpicture}
 \caption{Reducible fibers of conic bundles}\label{cbred}
\end{figure}
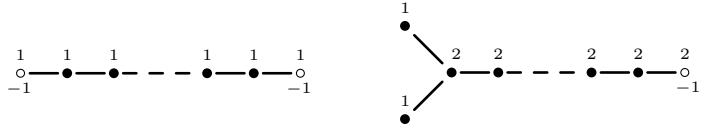

 Let $D$ be a reducible fiber of a conic bundle 
 and let $D^{red}$ be the sum of its integral components.
 Since $-K_X\cdot D=2$, then $D$ either 
 contains two $(-1)$-curves or a $(-1)$-curve 
 with multiplicity two in its support.
 Let $E$ be a curve in the support of $D^{red}$ 
 such that $D^{red}-E$ is connected. 
 Since the intersection form on 
 the components of $D^{red}-E$ is negative definite,
 then $D^{red}-E$ is either a generalized $(-1)$-curve 
 or a generalized $(-2)$-curve by Lemma \ref{N}.
 This implies that the support of $D$ 
 has the structure described by one of the diagrams 
 in Figure \ref{cbred}, where the white vertices represent 
 $(-1)$-curves and the remaining ones represent $(-2)$-curves.
 Moreover, $D$ is the unique reduced divisor with 
 the given support and with $D^2=0$:
 the multiplicity of each component is
 given by the number above each vertex 
 in Figure \ref{cbred}.
\end{proof}

\subsection{Nef and big divisors}
Let $D$ be an effective divisor without components 
in its base locus, $A,B$ be 
two smooth disjoint curves such that $D\cdot A=0$ 
and $s_A,s_B$ two sections defining $A$ and $B$. 
Then we have the following diagram:

\[
\xymatrix{
           &                                          & H^0(X,D-B)\ar[d]^{m_B}&  \\
0\ar[r] & H^0(X,D-A)\ar[r]^-{m_A} & H^0(X,D)\ar[r]^-{r_A}& H^0(A,\Osh_A) \cong \cc 
}
\]
where $m_A,m_B$ are the multiplication maps by  
$s_A$ and $s_B$ respectively, $r_A$ is the restriction to $A$  
and the horizontal sequence is exact.

\begin{lemma}\label{AB} If $h^0(X,D-B)>0$ and 
$A$ is not in the base locus of $|D-B|$, then 
$H^0(X,D)$ is generated by $H^0(X,D-A)$, $H^0(X,D-B)$ and the defining 
sections of $A$ and $B$.
\end{lemma}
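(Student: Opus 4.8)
The plan is to exploit the exactness of the horizontal sequence together with the vertical map $m_B$, essentially a diagram chase. Let $s\in H^0(X,D)$ be arbitrary. First I would apply the restriction map $r_A$ to $s$. Since $H^0(A,\Osh_A)\cong\cc$, the image $r_A(s)$ is a scalar, say $c$. The key observation is that the composite $r_A\circ m_B$ is nonzero: this is exactly the content of the hypothesis that $A$ is not in the base locus of $|D-B|$, since if $u\in H^0(X,D-B)$ with $u$ not vanishing on $A$, then $m_B(u)=s_B\cdot u$ restricts to $A$ as $s_B|_A$ times a nonzero constant, and $s_B|_A$ is a nonzero section of $\Osh_A(B)$ (here one uses $A\cap B=\emptyset$, so $s_B$ does not vanish identically on $A$; in fact $B\cdot A=0$ forces $\Osh_A(B)\cong\Osh_A$ and $s_B|_A$ a nonzero constant). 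Combined with $h^0(X,D-B)>0$, this shows $r_A\circ m_B\colon H^0(X,D-B)\to\cc$ is surjective.

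The main step is then immediate: choose $t\in H^0(X,D-B)$ with $r_A(m_B(t))=c=r_A(s)$. Then $s-m_B(t)\in\ker r_A$, and by exactness of the horizontal row $\ker r_A=\operatorname{im} m_A$, so $s-m_B(t)=m_A(w)$ for some $w\in H^0(X,D-A)$. Hence $s=m_A(w)+m_B(t)=s_A\cdot w+s_B\cdot t$, which expresses $s$ in terms of $H^0(X,D-A)$, $H^0(X,D-B)$ and the defining sections $s_A$, $s_B$ of $A$ and $B$. This proves the claim.

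The only point requiring care, and the one I expect to be the main obstacle to a fully rigorous write-up, is the nonvanishing statement $r_A\circ m_B\neq 0$: one must argue cleanly that $A\not\subseteq\Bs|D-B|$ together with $A\cap B=\emptyset$ genuinely guarantees the existence of a section of $\Osh_X(D-B)$ whose restriction to $A$ is a nonzero element of $H^0(A,\Osh_A)\cong\cc$. This is where the disjointness of $A$ and $B$ and the condition $D\cdot A=0$ (so that $\deg\Osh_A(D-B)=0$ and a nonzero restricted section is a nonvanishing constant) are used; everything else is the standard snake-type chase through the given diagram. No further geometry of the elliptic surface is needed.
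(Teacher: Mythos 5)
Your argument is correct and is essentially the paper's own proof: both reduce the statement to the surjectivity of $r_A\circ m_B$, which follows from the hypothesis that $A\not\subseteq\Bs|D-B|$ (together with $A\cap B=\emptyset$ and $D\cdot A=0$, which make the restricted sections constants), and then conclude by the exactness of the horizontal row. Your extra care in justifying $r_A\circ m_B\neq 0$ is a welcome elaboration of a step the paper states without detail, but it is not a different method.
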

\begin{proof}
Observe that  $D-A$ is effective and $r_A$ is surjective, 
since otherwise $|D|$ would contain components in its base locus.
Since $A$ is not in the base locus of $|D-B|$ 
then $r_A\circ m_B$ is also surjective.
Given $f\in H^0(X,D)$, let $r_A(f)=c\in H^0(A,\Osh_A)$ and let $g\in H^0(X,D-B)$ 
such that $r_A\circ m_B(g)=c$, 
then $f-m_B(g)\in H^0(X,D-A)$, proving the statement.
\end{proof}

%Observe that, given an ample divisor $H$ on $X$, 
%the intersections of $D-A$ and $D-B$ with $H$ 
%are strictly smaller than $D\cdot H$,  
%thus the previous Lemma suggests an inductive 
%procedure for identifying the generators of $H^0(X,D)$.
%In order to find $B$ as in the hypothesis 
%of Lemma \ref{AB} we will need the following 
%results.

Let $D$ be a nef and big divisor 
such that $|D|$ is base point free.
In what follows we will denote by $D^{\perp}$ 
the set of negative curves orthogonal to $D$.
Since $D^2>0$, the orthogonal 
complement of its class in $\Pic(X)$ 
is negative definite by Hodge index theorem.
Thus, if $D^{\perp}$ is not empty, 
the intersection form 
on the elements of $D^{\perp}$ 
is negative definite.
By Lemma \ref{N} each connected component 
of $D^{\perp}$ is either a generalized $(-1)$-curve 
or a generalized $(-2)$-curve.
%Let $N$ be a connected component
%of $D^{\perp}$.

\begin{proposition}\label{-1}
Let $D$ be a nef and big divisor   
such that $|D|$ is base point free and  
such that $D^{\perp}$ only contains $(-1)$-curves 
or only contains $(-2)$-curves,
then $\R(X)$ is not generated in degree $[D]$.
\end{proposition}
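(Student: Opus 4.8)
The plan is to reduce the claim to the criterion of Lemma~\ref{h1}: I must exhibit two disjoint curves $E_1,E_2$ on $X$, neither linearly equivalent to $D$, whose defining sections $x_1,x_2$ generate $H^0(X,D)$, equivalently such that $h^1(X,D-E_1-E_2)=0$. The natural candidates for $E_1,E_2$ are curves inside $D^\perp$, since subtracting them from $D$ does not decrease the intersection with $-K_X$. So first I would analyze the two cases separately according to whether $D^\perp$ consists of $(-1)$-curves or of $(-2)$-curves, using the structural description obtained just before the statement: by Lemma~\ref{N} each connected component of $D^\perp$ is a generalized $(-1)$- or $(-2)$-curve, but here every component is made purely of $(-1)$-curves or purely of $(-2)$-curves.

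In the $(-2)$-curve case, $D^\perp$ is a disjoint union of generalized $(-2)$-curves, each supported in a fiber of $\pi$. If $D^\perp$ contains two components, pick one $(-2)$-curve $E_1$ from each of two distinct components: they are disjoint and lie in (possibly different) fibers, and since they are orthogonal to the big divisor $D$ they cannot be linearly equivalent to $D$. Then $D-E_1-E_2$ is still nef with $-K_X\cdot(D-E_1-E_2)=-K_X\cdot D>0$ (as $D$ is big, $-K_X\cdot D>0$ by Proposition~\ref{har}(iv)), hence $h^1=0$ by Proposition~\ref{har}(i). If instead $D^\perp$ is a single connected generalized $(-2)$-curve, I would choose two $(-2)$-curves in it at distance $\ge 2$ in the intersection graph (such a pair exists unless the component is a single vertex or a segment of length one — cases I must rule out or handle separately, possibly by arguing that such a small $D^\perp$ forces a conic-bundle situation already covered, or by directly checking that $D-E_1-E_2$ stays nef); then again $D-E_1-E_2$ is nef and the same vanishing applies. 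The delicate point is guaranteeing disjointness and that nothing lies in the base locus, i.e. that $D-E_1-E_2$ really is nef; this follows because any negative curve meeting both $E_1$ and $E_2$ would have to connect two distant fiber-components or two distant points of one fiber, which is impossible for $(-2)$-curves inside fibers and can only be a section, but a section meets each fiber in one component only.

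In the $(-1)$-curve case, the components of $D^\perp$ are $(-1)$-curves, which are sections of $\pi$; since $\MW(\pi)$ is finite, distinct sections are disjoint by the remark after Table~\ref{equa}. If $D^\perp$ contains at least two sections $E_1=P$, $E_2=P'$, they are disjoint, not equivalent to $D$ (orthogonality again), and $D-P-P'$ has $-K_X\cdot(D-P-P')=-K_X\cdot D>0$; I must check $D-P-P'$ is nef, the only possible obstruction being a $(-2)$-curve meeting both $P$ and $P'$, which cannot happen since $P,P'$ are disjoint sections hitting at most one component of each fiber. If $D^\perp$ is a single $(-1)$-curve $P$, I would instead take $E_1=P$ together with a second curve: a suitable $(-2)$-curve $E_2$ in a fiber component not met by $P$ and orthogonal to $D$ — but wait, such a curve would also lie in $D^\perp$, contradicting that $D^\perp$ is a single $(-1)$-curve; so in that subcase I expect $D-P$ itself to have $-K_X\cdot(D-P)>0$ and I can subtract $P$ and one further curve $E_2$ with $D\cdot E_2=1$ (e.g. another section), checking that the loss of exactly one in the intersection still leaves $h^1=0$ via Proposition~\ref{har}(i) provided $-K_X\cdot(D-P-E_2)>0$.

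The main obstacle I anticipate is the bookkeeping at the boundary: showing in every subcase that the divisor obtained after subtracting the two curves remains nef (no base components appear) and that its intersection with $-K_X$ stays strictly positive, so that Proposition~\ref{har}(i) applies and Lemma~\ref{h1} concludes. This amounts to a finite but somewhat fiddly check using Tables~\ref{extremal} and~\ref{int} together with the classification of connected negative-definite configurations in Lemma~\ref{N}; once that is in hand, the surjectivity of $g^*$ and hence the non-generation in degree $[D]$ is immediate.
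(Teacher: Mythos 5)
Your overall strategy (reduce to Lemma~\ref{h1} by subtracting two disjoint curves and proving a vanishing via Proposition~\ref{har}) is a legitimate alternative to the paper's route, which instead applies Lemma~\ref{AB} with one curve $A\in D^{\perp}$ and a second curve $B$ for which $|D-B|$ is base point free. But as written your proposal has genuine gaps at exactly the points you flag as ``fiddly.'' First, an arithmetic error in the case where $D^{\perp}$ contains two $(-1)$-curves $P,P'$: since $-K_X\cdot P=-K_X\cdot P'=1$, you get $-K_X\cdot(D-P-P')=-K_X\cdot D-2$, not $-K_X\cdot D$. Base point freeness of $|D|$ only forces $-K_X\cdot D\geq 2$, so this quantity can be $0$, in which case Proposition~\ref{har}(iv) gives $D-P-P'\sim -aK_X$ with $h^1=a$, possibly nonzero, and your vanishing fails. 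The paper closes this exact loophole by observing that $D\sim -aK_X+P+P'$ would force a $(-2)$-curve in $D^{\perp}$ (every fibration here has a fiber with at least three components, and two disjoint sections miss at least one component of it), contradicting the hypothesis; some such argument is indispensable and is missing from your proposal.

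Second, in the $(-2)$-curve case you leave unresolved the subcases where $D^{\perp}$ is a single component of type $A_1$ or $A_2$, and your nefness argument for $D-E_1-E_2$ is not sound when $E_1,E_2$ lie in different fibers: a section $C$ can meet one component in each fiber, and if $D\cdot C=1$ then $(D-E_1-E_2)\cdot C=-1$, so Proposition~\ref{har}(i) does not apply. All of this is avoided by the paper's choice of second curve: take $B$ to be a \emph{smooth fiber} of $\pi$, so $B\sim -K_X$ is disjoint from any $(-2)$-curve $A\in D^{\perp}$, $D-B$ is nef (every $(-1)$-curve satisfies $D\cdot C\geq 1$ by hypothesis and every $(-2)$-curve satisfies $-K_X\cdot C=0$), and $-K_X\cdot(D-B)=-K_X\cdot D\geq 2$ gives base point freeness; Lemma~\ref{AB} then concludes uniformly with no case analysis on the shape of $D^{\perp}$. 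I would recommend either adopting that choice of auxiliary curve or, if you keep the $h^1$-vanishing route, supplying the two missing arguments above explicitly.
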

\begin{proof}
In both cases we will explain how to choose 
$A$ and $B$ in order to apply Lemma \ref{AB}.

We first assume that $D^{\perp}$ only contains 
$(-1)$-curves and let $A$ be a $(-1)$-curve 
in $D^{\perp}$. 
If $X$ contains another $(-1)$-curve $B$,
then the divisor $D-B$ is nef. 
If $-K_X\cdot (D-B)=1$, then 
$D=-aK_X+B+B'$ by Proposition \ref{har},
where $B'$ is a $(-1)$-curve. 
Since $\pi$ has always a fiber with more than two components, 
then there exists a $(-2)$-curve orthogonal 
to both $B$ and $B'$ (and thus to $D$),
contradicting the hypothesis on $D$.
Thus $-K_X\cdot (D-B)\geq 2$ 
and $D-B$ is base point free by Proposition \ref{har}.

 If $X$ only contains a $(-1)$-curve $A\in D^{\perp}$,
 then $X=X_{22}$ or $X_{211}$.
 In both cases we can choose a $(-2)$-curve 
 $B$  which does not intersect  $A$ and
intersects any other $(-2)$-curve in 
at most one simple point.  
Observe that $D-B$ is nef 
and $-K_X\cdot (D-B)=-K_X\cdot D\geq 2$,
thus $D-B$ is base point free 
by Proposition \ref{har}.

Finally, if $D^{\perp}$ only contains 
$(-2)$-curves, let $A$ be a 
 $(-2)$-curve in $D^{\perp}$ 
 and $B$ be a smooth fiber of $\pi$.
As before, $D-B$ is nef and 
base point free by Proposition \ref{har}.  
%In case $X$ contains a unique $(-1)$-curve $A$,
%i.e. either $X=X_{22}$ or $X=X_{211}$, 
%let $B$ be a $(-2)$-curve not intersecting $A$.
%The $D-B$ is nef and does not contain $A$ 
%in its base locus since otherwise,
%by Proposition \ref{har}, $D$ would 
%intersect $B$ negatively,
%contradicting the hypothesis.
\end{proof}
%By the previous lemma, we can assume that 
%$D^\perp$ contains $(-2)$-curves.
%We will say that a connected component $N$ of  
%$D^{\perp}$ containing $(-2)$-curves 
%is {\em of  type} $A_n, D_n, 
%E_6, E_7$ or $E_8$ if the intersection graph 
%of the $(-2)$-curves in $N$ 
%is of the corresponding type.
%In what follows we will identify 
%$N$ with the divisor  $\sum_{E\in N} E$.

\begin{proposition}\label{bl}
Let $D$ be a nef and big divisor
such that $|D|$ is base point free,
$B$ be an integral curve 
in  $D^{\perp}$
and $N$ be the connected
component of $D^{\bot}$
which contains $B$.
Then one of the following
holds:
\begin{enumerate}
%\item $D\sim -K_X + 2B + C$,
%where $B$ is a $(-1)$-curve
%and $C$ is a $(-2)$-curve
%with $B\cdot C=1$;
\item all the components in the base locus
of $|D-B|$ are contained in $N$;
\item $D\sim -K_X+B+C$,
where $B$ and $C$ are distinct
$(-1)$-curves.
%=|D-E|+E-B$ and $|D-E|$ 
%has no fixed components.
\end{enumerate}
%the support of the base 
%locus of $|D-B|$ is  $N$.
 \end{proposition}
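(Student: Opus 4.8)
The plan is to analyze the base locus of $|D-B|$ by peeling off curves one at a time, using the exact sequence technology from Lemma \ref{AB} and the structural results of Lemma \ref{N} and Proposition \ref{har}. First I would observe that since $|D|$ is base point free and $D\cdot B=0$ with $B$ integral in $N$, the divisor $D-B$ is effective (by Riemann-Roch, as $D-B$ has non-negative self-intersection once we check $(D-B)^2=D^2-2D\cdot B+B^2=D^2+B^2\geq 0$ — actually $B^2=-2$ or $-1$ so this needs $D^2\geq 2$; since $D$ is big, $D^2\geq 1$, and when $D^2=1$ we handle it separately via Proposition \ref{har}(iii)). The key point is that $-K_X\cdot(D-B)=-K_X\cdot D$ since $B$ is a section or a fiber component; if $B$ is a $(-1)$-curve this is $-K_X\cdot D - 1$, and if $B$ is a $(-2)$-curve it is exactly $-K_X\cdot D$. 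So the cases to distinguish are governed by the value of $-K_X\cdot(D-B)$.

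Next I would do a case analysis on $-K_X\cdot(D-B)$. If $-K_X\cdot(D-B)\geq 2$, then by Proposition \ref{har}(ii) the linear system $|D-B|$ is base point free, so it has no components in its base locus at all, and conclusion (i) holds trivially. If $-K_X\cdot(D-B)=1$, then by Proposition \ref{har}(iii) we have $D-B\sim -aK_X+C$ for a section $C$ and positive integer $a$, and $C$ lies in the base locus of $|D-B|$. Here the subcase $a=1$ gives exactly $D\sim -K_X+B+C$ with $B,C$ distinct $(-1)$-curves (distinct because torsion sections are disjoint, and $B\cdot C$ would have to be checked; if $B=C$ we'd get $D\sim -K_X+2B$ which forces $D\cdot B = -K_X\cdot B+2B^2 = 1-2=-1<0$, contradicting $D$ nef), which is conclusion (ii). When $a\geq 2$, I need to show the base locus of $|D-B|=|-aK_X+C|$ is contained in $N$: the base locus here is $C$ together with possibly some $(-2)$-curves, and the point is that $C$ meets the component of a fiber touching the zero section while $D\cdot C$ should be computed — actually since $D\cdot C = (-K_X+B+C+(a-1)(-K_X))\cdot C$... this requires care. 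The cleanest route is: any curve $E$ in the base locus of $|D-B|$ satisfies $(D-B)\cdot E < 0$ only if $E$ is a fixed component, and since $D$ is nef, $(D-B)\cdot E = -B\cdot E \leq 0$, with equality unless $E$ meets $B$; so fixed components either are orthogonal to $D$ (hence in $D^\perp$) or meet $B$. A component $E\in D^\perp$ meeting $B$ lies in the same connected component $N$. A component $E\notin D^\perp$ in the base locus would have to be a $(-1)$-curve meeting $B$ with $(D-B)\cdot E = D\cdot E - B\cdot E$; pushing this through one finds $D\sim -aK_X + (\text{stuff})$ and ultimately the $a=1$, conclusion (ii) case.

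The final step is to remove the base point free hypothesis subtlety in the remaining case $-K_X\cdot(D-B)=0$: here Proposition \ref{har}(iv) gives $D-B\sim -aK_X$, so $D\sim -aK_X+B$ and then $D\cdot B = -aK_X\cdot B + B^2 = -B^2 = 2$ if $B$ is a $(-2)$-curve — but $D\cdot B=0$ by assumption, contradiction; if $B$ is a $(-1)$-curve then $D\cdot B = -aK_X\cdot B + B^2 = a - 1$, which equals $0$ only when $a=1$, giving $D\sim -K_X+B$, but then $-K_X\cdot D = -K_X\cdot(-K_X) + (-K_X)\cdot B = 0+1 = 1\neq 0$, contradicting $-K_X\cdot(D-B)=0$ meaning $-K_X\cdot D = -K_X\cdot B = 1$ — wait, that's consistent, so this subcase needs the observation that $D = -K_X + B$ is not big ($D^2 = K_X^2 - 2K_X\cdot B + B^2 = 0 + 2 - 1 = 1 > 0$, so it is big) — so actually $D\sim -K_X+B$ with $B$ a $(-1)$-curve does occur, and then $|D-B| = |-K_X|$ which has the whole reducible fibers... no, $-K_X$ is the fiber class, base point free, so its base locus is empty and (i) holds. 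So the genuinely hard part will be the bookkeeping in the $a\geq 2$ subcase of $-K_X\cdot(D-B)=1$: showing precisely that every fixed component of $|-aK_X+C|$ either lies in $N$ or forces us into conclusion (ii), which requires carefully tracking which $(-2)$-curves the section $C$ meets (using Table \ref{int}) and how they connect to $B$ inside $D^\perp$. I expect this is where one must invoke the explicit geometry of the sixteen surfaces rather than a uniform argument.
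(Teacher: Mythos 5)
Your central step does not work: you apply Proposition \ref{har} to the divisor $D-B$, but that proposition requires the divisor to be \emph{nef}, and $D-B$ is generally not nef. If the connected component $N$ of $D^{\perp}$ containing $B$ has more than one irreducible component, then any curve $A$ of $N$ adjacent to $B$ satisfies $(D-B)\cdot A = D\cdot A - B\cdot A = -B\cdot A<0$. So the conclusion ``$-K_X\cdot(D-B)\geq 2$ implies $|D-B|$ is base point free'' is false in exactly the situation the proposition is designed to handle (e.g.\ $B$ a $(-2)$-curve in a longer chain inside $N$: the neighbours of $B$ are forced into the base locus of $|D-B|$). Your later attempt to patch this by intersection numbers is also not sound: you use the implication ``fixed component $\Rightarrow$ negative intersection with $D-B$,'' but that implication goes the wrong way --- a curve can lie in the base locus while meeting the divisor non-negatively (Proposition \ref{har}(iii) itself gives such an example, $P\cdot(-aK_X+P)=a-1\geq 0$). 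So your argument never actually excludes base components lying outside $N$, which is the real content of the statement.

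The paper's proof circumvents this by not working with $D-B$ directly. It considers the morphism $\varphi_{|D|}:X\to Y$ onto a surface with ADE singularities and takes $E$ to be the full exceptional cycle over $\varphi(B)$, whose support is $N$ (so $E^2=-1$ or $E^2=-2$). It first shows, by a decomposition argument on $E$, that $E-B$ is automatically contained in the base locus of $|D-B|$ (harmless for conclusion (i), since it lies in $N$); it then proves that $D-E$ \emph{is} nef, by bounding $E\cdot C\leq 2$ for negative curves $C$ and treating the case $E\cdot C=2$ with Riemann--Roch; only at that point does it invoke Proposition \ref{har} for $D-E$, which yields either an empty base locus outside $N$ or the exceptional case $D\sim -K_X+E+C$ that collapses to conclusion (ii). This is a uniform argument --- your expectation that one must fall back on the explicit geometry of the sixteen surfaces is not needed. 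To repair your proof you would have to import essentially all of this: identify the cycle $E$, show $E-B$ is fixed in $|D-B|$, and establish nefness of $D-E$ before any appeal to Proposition \ref{har}.
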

\begin{proof}
%Since $X$ is a Mori dream surface,
%then $D$ is semiample by \cite[Corollary 2.38]{Hau}, so that  
%$\varphi:=\varphi_{|nD|}:X\to Y$ is a morphism onto 
%a normal surface for $n$ sufficiently big.
Since $|D|$ is base point free, then 
$\varphi:=\varphi_{|D|}:X\to Y$ is a morphism onto 
a normal surface.
Moreover, since the only negative curves of $X$ 
are $(-1)$- and $(-2)$-curves, then $Y$ has
at most ADE singularities.
Let $E$ be the exceptional divisor of $\varphi$ 
over $\varphi(B)$.
Observe that the support of $E$ is
$N$ since it is the connected component
of $D^\bot$ which contains $B$.
%Moreover either $E^2=-1$ or $E^2=-2$,
% depending on the fact that $\varphi(B)$ is a smooth
%point of $Y$ or not.
If $E^2=-1$, then $E$ is a reduced divisor 
and $N$ is a generalized $(-1)$-curve,
so that $\varphi(B)$ is a smooth point.
Otherwise, $E$ is the fundamental cycle 
of an ADE singularity, so that $E^2=-2$
and  $N$ is a generalized $(-2)$-curve
(see \cite[\S 3, Chapter III]{BP}).

We begin by showing that $E-B$ is
contained in the base locus of 
$|D-B|$.
To this aim, consider a non-trivial
decomposition $E=E_1+E_2$ 
where both $E_1$ and $E_2$ are effective
divisors.
We have that $E_i^2\leq -2$ for some $i$ 
since otherwise $E_1^2=E_2^2=-1$ 
and $E_1\cdot E_2=0$, so that $E$ would be 
disconnected.
Hence from
\[
 -2\leq E^2=E_1^2+E_2^2+2E_1\cdot E_2,
\]
we deduce $E_1\cdot E_2>0$, so that
some component $C$ of the support of 
$E_2$ must have
negative intersection with $D-E_1$,
which implies that $C$ is contained in the
base locus of $|D-E_1|$. 
If we assume that the base locus of $|D-B|$ only 
contains a proper subdivisor $F$ of $E-B$,
then we get a contradiction taking $E_1= B+F$.
This gives the claim.
%Starting with $E^0_1=B$ and   
%then taking at each step $E^i_1=E_1^{i-1}+C$,
%where $C$ is a curve in the support of $E_2^i$ 
%lying in the base locus of $D-E^1_i$,
%we obtain the claim.

We now show that $D-E$ is nef.
Let $C$ be a negative curve of $X$
with $E\cdot C >0$.
First of all we observe that if $E^2=-1$,
then $E$ is a generalized $(-1)$-curve
and $E\cdot C'\leq 0$ for any curve
$C'$ contained in its support. 
The same holds if $E^2=-2$
by~\cite[Corollary 3.6]{BP}.
Hence in both cases $C$ can not
be a component of the support $N$
of $E$.
We also have $D\cdot C>0$, since
otherwise $C$ would be contained
is some connected component
of $D^\bot$, disjoint from $N$,
giving $E\cdot C=0$.
By looking at the possible intersection
graphs for $E$ (see \cite[\S 3, Chapter III]{BP}) 
we see that 
\[
 E\cdot C \leq 2.
\]
In case $E^2=-1$ this is clear since $E$ is a reduced divisor.
If $E^2=-2$ this follows from 
the fact that there exists a fiber $F$ of $\pi$ 
such that $F-E$ is effective. 

Thus we have two consider only
two cases. If $E\cdot C=1$, then
$(D-E)\cdot C\geq 0$ due to $D\cdot C>0$.
Assume now $E\cdot C = 2$.
In this case $C^2=-2$ and 
the divisor $E+C+K_X$ is lineary equivalent
to an effective divisor by the Riemann-Roch theorem 
since $(E+C+K_X)^2-K_X\cdot(E+C+K_X)\geq 0$. 
This implies that $D\cdot (E+C)\geq 2$ 
by Proposition \ref{har}.
Hence
\[
 (D-E)\cdot C
 =
 D\cdot C-2
 =
 D\cdot (E+C)-2\geq 0,
\]
proving that $D-E$ is nef.

We now study the base locus of
$|D-E|$.
We can assume $E^2=-1$, since
otherwise $-K_X\cdot (D-E)\geq 2$
so that $|D-E|$ would be base point free 
by Proposition~\ref{har}.
Moreover, by the same proposition, the linear
system $|D-E|$ is base point free unless 
$D-E\sim -aK_X+C$, where 
$C$ is a $(-1)$-curve of $X$,
that is
\[
 D \sim -aK_X+E+C.
\]
In this case the unique component
in the base locus of $|D-E|$ is $C$. So, either we are in
case (i) of the statement or $C$ is not
contained in the support $N$ of $E$ and
in particular $E\cdot C\geq 0$.
Assume we are in this second case.
Since $E^2=-1$, then $D\cdot C=a+E\cdot C-1
=D\cdot E=0$, so that $a=1$ and $E\cdot C=0$.
Observe that $E$, which is a generalized
$(-1)$-curve, can not contain $(-2)$-curves,
since otherwise the last $(-2)$-curve
in the chain would intersect $D$ negatively
contradicting the hypothesis.
Thus we are in case (ii), concluding the proof.
% and either $C\in N$, 
%or  $C$ belongs to a distinct component of $D^{\perp}$.
% %This implies that $N\cdot S=0$ and $a=1$. 
% In the second case, if $R$ is an external $(-2)$-curve of $N$,
% then $D\cdot R=N\cdot R=-1$, contradicting the fact that 
% $D$ is nef.
 % The case when $N$ is of  type $D_n$ is similar.
%Observe that $N$ contains no $(-1)$-curves 
%by Lemma \ref{N}.
%Thus $N=\Theta_0+\Theta_1+\cdots+\Theta_{k}$, 
%$k\geq 3$,  
%where the $\Theta_i$'s have an intersection graph 
%of type $D_n$ with the notation in Table \ref{sing}.
%It can be easily checked that the divisor 
%$D':=D-\Theta_0-\Theta_1-\Theta_{k}-2\sum_{i=2}^{k-1}\Theta_i$
% is nef (again we use the fact that $-K_X\cdot D\geq 2$ to see that $D'$ has 
% non-negative intersection with the curves outside $D^{\perp}$),
% thus $|D'|$ is base point free by Proposition \ref{har}.
% In case $N$ is of type $E_6, E_7$ or $E_8$
% we can assume that the support of $N$ consists of the curves $\Theta_i$ 
% with $i=1,\dots,6$, $i=1,\dots,7$ and $i=1,\dots,8$ respectively.
% Then the following divisors are nef:
% \[
% D-2\Theta_1-3\Theta_2-2\Theta_3-\Theta_4-2\Theta_5-\Theta_6, 
% \]
% \[
% D-2\Theta_1-3\Theta_2-4\Theta_3-3\Theta_4-2\Theta_5-\Theta_6-2\Theta_7,
% \]
% \[
% D-2\Theta_1-3\Theta_2-4\Theta_3-5\Theta_4-6\Theta_5-4\Theta_6-2\Theta_7-3\Theta_8,
% \]
% and we conclude as before that they are base point free 
% by Proposition \ref{har}.
 \end{proof}

\subsection{Generators of $\R(X)$}
We now determine a minimal 
generating set for the Cox ring of any 
extremal rational elliptic surface.
\begin{definition}\label{dis} Consider the following divisors:
\begin{enumerate}
\item $(-1)$-curves or $(-2)$-curves,
\item conic bundles,
\item $-K_X$

\item nef divisors $D$ with $D^2=1$, $D\cdot (-K_X)=3$. 
%and such that $D^{\perp}$ contains a generalized $(-1)$-curve of length $9$.
\end{enumerate}
For each such divisor $D$ choose a basis of $H^0(X,D)$ and 
let $\Omega$ be the union of all such bases.
An element $f\in \Omega$ will be called a {\em distinguished section} of $\R(X)$.
A {\em distinguished polynomial} is a polynomial in the distinguished sections.
\end{definition}

\begin{lemma}\label{kab}
Let $D=-K_X+A+B$, where $A,B$ are two distinct $(-1)$-curves. 
Then $H^0(X,D)$ is generated by distinguished sections. 
\end{lemma}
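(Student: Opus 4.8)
The plan is to analyze the divisor $D = -K_X + A + B$ directly via the linear system $|D|$, using the results on nef and big divisors established above. First I would check that $D$ is nef: since $A$ and $B$ are disjoint $(-1)$-curves, $-K_X$ is nef, $(-K_X+A+B)\cdot A = 1 - 1 + 0 = 0$, and similarly $D\cdot B = 0$, while for any other negative curve $C$ one has $-K_X\cdot C\ge 0$ and $A\cdot C, B\cdot C\ge 0$ unless $C$ equals $A$ or $B$ — so $D$ is nef. Moreover $D^2 = K_X^2 + 2(-K_X)\cdot(A+B) + (A+B)^2 = 0 + 2(1+1) + (-1-1) = 2 > 0$, so $D$ is big, and $-K_X\cdot D = -K_X\cdot(-K_X) + (-K_X)\cdot(A+B) = 0 + 2 = 2$, hence by Proposition \ref{har}(ii) the system $|D|$ is base point free. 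Thus $D$ falls into the framework of Proposition \ref{bl}.

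Next I would apply Proposition \ref{bl} to $D$ with $B$ as the chosen integral curve in $D^\perp$ (note $A,B\in D^\perp$). Since $D\sim -K_X + A + B$ is precisely of the shape in alternative (ii) of that proposition (with the roles of $B$ and $C$ there played by our $B$ and $A$), we learn that the base locus of $|D-B|$ is controlled: either all its components lie in the connected component $N$ of $D^\perp$ containing $B$, or we are in case (ii). In our situation $N = \{B\}$ (as $B$ is a $(-1)$-curve disjoint from the rest of $D^\perp$, being a torsion section). The key point I want to extract is that $D - B \sim -K_X + A$ is nef with $-K_X\cdot(D-B) = 1$, so by Proposition \ref{har}(iii) its base locus is exactly $A$, and $h^0(X, D-B) = h^0(X, -K_X + A) = h^0(X, -K_X) > 0$. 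Symmetrically $D - A \sim -K_X + B$ has base locus exactly $B$.

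Now I would invoke Lemma \ref{AB} with the pair $(A,B)$: it applies because $h^0(X, D-B) > 0$ and $A$ is \emph{not} in the base locus of $|D-B| = |-K_X+A|$ (whose base locus is $\{A\}$ — wait, this needs care). Here is the real subtlety and the main obstacle: $A$ \emph{is} in the base locus of $|-K_X+A|$, so Lemma \ref{AB} cannot be applied naively to the pair $(A,B)$. The fix is to choose the two curves in Lemma \ref{AB} more carefully. Since $|D|$ is base point free and $-K_X\cdot D = 2$, the morphism $\varphi_{|D|}$ contracts exactly $A$ and $B$ (the components of $D^\perp$) to points; a generic member of $|D|$ is a smooth rational curve disjoint from $A$ and $B$. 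I would instead run Lemma \ref{AB} with $A$ a smooth fiber $F$ of $\pi$ (so $D\cdot F = -K_X\cdot D\cdot\text{(something)}$ — actually $F\cdot D = -K_X\cdot D = 2 \ne 0$, so $F$ is not orthogonal to $D$ either).

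Let me restructure: the cleanest route is to pick $A$ to be one of the two contracted $(-1)$-curves, say $A$, which satisfies $D\cdot A = 0$, and pick $B$ to be a curve with $D\cdot B > 0$ that is not in the base locus of $|D-A| = |-K_X+B|$. Since $|-K_X+B|$ has base locus exactly $\{B\}$, any negative curve $B'\ne B$ works, and we can even take $B' = $ some $(-2)$-curve. Then $D - B' \sim -K_X + A + B - B'$ is nef (one checks $-K_X\cdot(D-B')\ge 1$ and it meets the finitely many negative curves nonnegatively, using that $B'$ is distinct from $A,B$), in fact $-K_X\cdot(D-B') = 2 - (-K_X\cdot B') = 2$, so $D - B'$ is base point free by Proposition \ref{har}(ii) — in particular $A\notin\Bs|D-B'|$ and $h^0(X,D-B')>0$. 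Now Lemma \ref{AB} with the pair $(A, B')$ gives that $H^0(X,D)$ is generated by $H^0(X,D-A)$, $H^0(X,D-B')$ and the defining sections of $A$ and $B'$. The sections of $A$ and $B'$ are distinguished (type (i) in Definition \ref{dis}). For $H^0(X,D-B')$: this is a nef divisor with $D-B' \sim -K_X+A+B-B'$ and $(D-B')^2 = 2 - 2(-K_X+A+B)\cdot B' + B'^2$; one checks this drops the self-intersection and eventually, after finitely many such reduction steps, we reach either a conic bundle, or $-K_X$ (or $aF$), or a nef divisor with $D^2 = 1$, $-K_X\cdot D = 3$ — all distinguished. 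For $H^0(X,D-A) = H^0(X,-K_X+B)$: since $-K_X\cdot(-K_X+B) = 1$, Proposition \ref{har}(iii) says $B$ is a fixed component, so $H^0(X,-K_X+B) = s_B\cdot H^0(X,-K_X)$, and $H^0(X,-K_X)$ is spanned by distinguished sections (either $-K_X$ is itself of type (iii) in Definition \ref{dis}, or $\pi$ has a reducible fiber and $H^0(X,-K_X)$ is generated by the sections of negative curves, as in the proof of Proposition \ref{proposition: D2=0}).

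The main obstacle, therefore, is the bookkeeping of the descent: showing that repeatedly subtracting negative curves from $D$ as above always lands, in finitely many steps, inside the list in Definition \ref{dis} (types (i)--(iv)), never creating a divisor that is big and nef with $D^2\ge 2$ and $-K_X\cdot D\ge 4$ that fails to reduce. This is a finiteness/termination argument: each reduction step strictly decreases either $D^2$ or $-K_X\cdot D$ (or reaches the base-point-free-with-$D^\perp$-only-one-type situation of Proposition \ref{-1}), and the possible terminal cases are exactly the conic bundles, multiples of $F$, $-K_X$, and the $(D^2,-K_X\cdot D)=(1,3)$ divisors. I would organize this as an induction on $-K_X\cdot D$ (with a secondary induction on $D^2$), with base cases $-K_X\cdot D\le 3$ handled by Propositions \ref{har}, \ref{-1}, \ref{bl} and the conic bundle analysis, and the inductive step performed by the Lemma \ref{AB} argument above. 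Care must be taken when $D-B'$ has $(D-B')^\perp$ containing both $(-1)$- and $(-2)$-curves — but then Proposition \ref{bl}(ii) pins down $D-B' \sim -K_X + A' + B'$ again and we recurse on strictly smaller $-K_X$-degree, which is impossible unless we have already terminated; so this case does not actually arise at the top. Everything else is routine intersection-number verification using Tables \ref{extremal} and \ref{int}.
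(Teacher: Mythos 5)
Your setup is fine ($D$ is nef and big with $-K_X\cdot D=2$, $|D|$ is base point free, and you correctly notice that the naive application of Lemma \ref{AB} to the pair $(A,B)$ fails because $A\in\Bs|{-K_X}+A|$), but the repair you propose has a genuine gap: the claim that $D-B'$ is nef for an arbitrary $(-2)$-curve $B'$ distinct from (even disjoint from) $A$ and $B$ is false, and in some cases \emph{no} $(-2)$-curve $B'$ works at all. Indeed $D\cdot C=(A+B)\cdot C$ for every $(-2)$-curve $C$, so $D$ has degree zero on every fiber component missing both sections; if $B'$ is such a component and $C$ is a neighbour of $B'$ in its fiber that also misses $A$ and $B$, then $(D-B')\cdot C=-1$. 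Concretely, on $X_{9111}$ with $A=P_0$, $B=P_1$, the ${\rm I}_9$ fiber is a $9$-cycle in which the two sections meet only $\Theta_0$ and $\Theta_3$; every admissible $\Theta_j$ ($j\neq 0,3$) has a neighbour meeting neither section, so $D-\Theta_j$ is never nef and Proposition \ref{har} cannot be invoked to get $A\notin\Bs|D-B'|$. (You would need $B'$ disjoint from $A$ and $B$ \emph{and} every curve meeting $B'$ to meet $A$ or $B$, which cannot be arranged in general.) The closing discussion about descent and termination is moot: when $B'$ is disjoint from $A$ and $B$ one has $(D-B')^2=0$ on the nose, so there is nothing to iterate --- the entire difficulty of your argument is concentrated in this one unjustified nefness claim.

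The paper sidesteps all of this with a dimension count: $h^0(X,D)=3$ by Riemann--Roch and Proposition \ref{har}; the two-dimensional subspace $s_As_B\cdot H^0(X,-K_X)$ consists of distinguished polynomials; and a third, independent, distinguished section comes from the conic bundle $D'=A+B+(\text{chain of }(-2)\text{-curves joining }A\text{ and }B\text{ inside a fiber of }\pi)$, since $D-D'$ is effective and supported on $(-2)$-curves while the resulting divisor does not contain $A+B$. If you prefer to keep the Lemma \ref{AB} format, the correct choice of second curve is a smooth fiber of this conic bundle $D'$ rather than a $(-2)$-curve (the paper uses exactly this device with the divisor $Q$ in the proof of Theorem \ref{teo}): then $h^0(X,D-D')=1$, the unique member of $|D-D'|$ does not contain $A$, and $H^0(X,D-A)=s_B\cdot H^0(X,-K_X)$ as you already observed.
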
 
\begin{proof}
By Riemann-Roch Theorem and Proposition \ref{har} we have that  
$h^0(X,D)=3$.
Moreover, observe that $|D|$ contains $|-K_X|+A+B$ 
and there exists a conic bundle $D'$ on $X$, 
obtained by connecting $A$ and $B$ 
with a chain of $(-2)$-curves inside one fiber,
such that $D-D'$ is linearly equivalent 
to an effective divisor.
Thus $H^0(X,D)$ is generated by sections of degree $[-K_X]$, $[D']$
and by sections defining negative curves.
\end{proof}

\begin{theorem}\label{teo}
Let $\pi:X\to \pp^1$ be an extremal rational elliptic 
surface. Then the Cox ring $\mathcal R(X)$ is generated 
by distinguished sections.

%If the fibration $\pi:X\to \pp^1$ has only fibers of type ${\rm I}_n$ with 
%$n\leq 6$, then the Cox ring $\mathcal R(X)$ is generated by the defining 
%sections of $(-2)$-curves and of $(-1)$-curves.

%If $X=X_{8211}$,  then $\mathcal R(X)$ is generated by
%the defining sections of $(-2)$-curves, of $(-1)$-curves and 
%of a smooth fiber for each extremal conic bundle.

%If  $X=X_{9111}$, then $\mathcal R(X)$ is generated by
%the defining sections of $(-2)$-curves, of $(-1)$-curves, 
%of a smooth fiber for each extremal conic bundle,
%of a smooth fiber of $\pi$ 
%and by three sections defining smooth divisors $D$ on $X$ 
%such that $D^{\perp}$ is a generalized $(-1)$-curve  
%of type $A_8$.
\end{theorem}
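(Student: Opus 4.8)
The plan is to show that every homogeneous element of $\R(X)$ of degree $[D]$, for $D$ an arbitrary effective divisor, lies in the subalgebra generated by distinguished sections, arguing by induction on the intersection number $-K_X\cdot D$ (and, within a fixed value of this number, on some secondary invariant such as $D^2$). Since the effective cone of $X$ is generated by the classes of negative curves (by \cite[Proposition 1.1]{ArLa}, as recalled in the excerpt), we may freely reduce to the case where $D$ is nef; indeed if $D$ is effective but not nef, then $D\cdot C<0$ for some negative curve $C$, hence $C$ is a fixed component of $|D|$, every section of degree $[D]$ is divisible by the defining section $x_C$ of $C$, and $H^0(X,D)=x_C\cdot H^0(X,D-C)$, so we conclude by induction on $-K_X\cdot D$ together with the fact that $x_C$ is itself distinguished. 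Thus the real content is the case $D$ nef.

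For $D$ nef we split according to $-K_X\cdot D$ using Proposition \ref{har}. If $-K_X\cdot D=0$ then $D\sim -aK_X$ and $H^0(X,D)$ is generated by the section of $-K_X$ together with (if $\pi$ is not extremal with a single reducible fiber) sections of negative curves, by Proposition \ref{proposition: D2=0}; either way these are distinguished. If $-K_X\cdot D=1$ then $D\sim -aK_X+P$ with $P$ a section in the base locus, so $H^0(X,D)=x_P\cdot H^0(X,D-P)$ and we reduce the value of $-K_X\cdot D$. If $-K_X\cdot D=2$ then $|D|$ is base point free; if moreover $D^2=0$ then $D$ is a fiber multiple or a conic bundle and Proposition \ref{proposition: D2=0} gives the claim, while if $D^2>0$ then $D$ is nef and big and base point free, and we invoke the structural results of the previous subsection. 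The remaining, main case is $-K_X\cdot D\geq 3$ with $D$ nef, which we attack by peeling off negative curves: we want to find negative curves $A$ and $B$ with $A$ disjoint from $B$, $D\cdot A=0$, and $A\notin\Bs|D-B|$, so that Lemma \ref{AB} writes $H^0(X,D)$ in terms of $H^0(X,D-A)$, $H^0(X,D-B)$, $x_A$, $x_B$, and then induction (on $-K_X\cdot D$ for $D-B$, and on the chosen secondary invariant for $D-A$, which has the same value of $-K_X\cdot D$ but strictly larger $D\cdot(-K_X)$-less... actually smaller $D^2$) finishes the argument.

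Carrying out the induction step requires that such $A$ and $B$ exist. When $D^\perp\neq\emptyset$ and contains an integral curve $B$, Proposition \ref{bl} tells us that either all base components of $|D-B|$ lie in the connected component $N$ of $D^\perp$ containing $B$ — in which case, combined with Proposition \ref{-1} handling the pure cases, a careful choice of $A$ inside $N$ and of $B$ an appropriate curve outside $D^\perp$ lets Lemma \ref{AB} apply — or else $D\sim -K_X+B+C$ with $B,C$ distinct $(-1)$-curves, which is exactly the situation covered by Lemma \ref{kab}. When $D^\perp=\emptyset$, so $D$ is nef and big with $|D|$ base point free and strictly positive on every negative curve, one expects $h^1(X,D-A-B)=0$ for suitable disjoint negative curves $A,B$ (via Proposition \ref{har} or Kawamata--Viehweg, as in the proof of Proposition \ref{ample}), and then a surjectivity argument in the spirit of Lemma \ref{h1}, but run in the opposite direction, shows $H^0(X,D)$ is spanned by products of lower-degree sections with the $x_A,x_B$; concretely one uses the exact sequence to lift a general section to $H^0(X,D-A)\oplus H^0(X,D-B)$ modulo the image, and absorbs the remaining piece using that $-K_X\cdot D\geq 3$ forces $D$ to decompose as $-K_X$ plus an effective nef divisor with smaller anticanonical degree. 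The main obstacle is bookkeeping: one must verify, case by case across the sixteen surfaces of Table \ref{extremal} (or rather, uniformly using Table \ref{int}), that the required pair of disjoint negative curves with the stated base-locus property always exists, and that the secondary induction is genuinely well-founded; the surface $X_{3333}$, having no convenient pair of far-apart fiber components, will as in Proposition \ref{ample} need a separate ad hoc choice.
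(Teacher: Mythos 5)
Your overall strategy (reduce to the nef case, then peel off curves via Lemma \ref{AB} using Propositions \ref{ample}, \ref{-1}, \ref{bl} and Lemma \ref{kab}) is the same as the paper's, but two points in your sketch are genuine gaps rather than bookkeeping. First, your induction is not well-founded as stated: if $C$ is a $(-2)$-curve in the base locus of $|D|$ with $D\cdot C=-1$, then $-K_X\cdot(D-C)=-K_X\cdot D$ \emph{and} $(D-C)^2=D^2+2+C^2=D^2$, so neither your primary nor your secondary invariant drops when you strip off $C$; the same degeneration occurs when $A$ is a $(-2)$-curve in $D^\perp$ and $B$ is a $(-2)$-curve or a fiber, where $-K_X\cdot(D-B)=-K_X\cdot D$. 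The paper avoids this entirely by inducting on $d(D)=D\cdot H$ for a fixed ample $H$, which strictly decreases under subtraction of \emph{any} effective curve. You flag the worry at the end but do not repair it; with your choice of invariants the argument does not terminate.

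Second, and more seriously, your treatment of the case where $D^\perp$ has a single connected component $N$ is too optimistic: you assert that ``a careful choice of $A$ inside $N$ and of $B$ outside $D^\perp$ lets Lemma \ref{AB} apply,'' but for $X_{22}$, $X_{211}$ and $X_{9111}$ there are divisors $D$ for which \emph{every} $(-1)$-curve meets $N$ and the external $(-2)$-curve of $N$ meets no $(-1)$-curve, so no admissible $B$ exists (neither a $(-1)$-curve disjoint from $N$ nor the conic bundle $Q=\sum N_i+E$ used in the paper's intermediate case). There $N=P_0+\sum_{i=0}^{7}\Theta_i$ is a generalized $(-1)$-curve of length nine, $|D|$ contracts it to a point of $\pp^2$, $D=nL$ with $L$ the pull-back of a line, and one must argue directly that $H^0(X,D)=\Sym^nH^0(X,L)$. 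This is precisely the step that forces the type (iv) generators of Definition \ref{dis} into the generating set; a proof that never reaches this case cannot explain why those sections are needed, and Lemma \ref{AB} genuinely fails to apply there. Your handling of the remaining cases (base-point-free $D^2=0$ via Proposition \ref{proposition: D2=0}, ample $D$ via the $h^1$-vanishing of Proposition \ref{ample}, two components of $D^\perp$ via Proposition \ref{bl} and Lemma \ref{kab}) matches the paper and is fine.
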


%\begin{theorem}\label{I_n}
%If the fibration $\pi:X\to \pp^1$ has only fibers of type ${\rm I}_n$ with 
%$n\leq 6$, then the Cox ring $\mathcal R(X)$ is generated by the defining 
%sections of $(-2)$-curves and of $(-1)$-curves.
%
%If $X=X_{8211}$,  then $\mathcal R(X)$ is generated by
%the previous sections and by the defining section 
%of a smooth fiber for each extremal conic bundle.
%
%If  $X=X_{9111}$, then $\mathcal R(X)$ is generated by
%the previous sections, a section defining a smooth fiber of $\pi$ 
%and three sections defining smooth divisors $D$ on $X$ 
%such that $D^{\perp}$ is a generalized $(-1)$-curve  
%of type $A_8$.
%\end{theorem}
\begin{proof}
Let $D$ be an effective divisor of $X$.
We will prove the statement by induction 
on $d(D):=D\cdot H$, where $H$ is an ample divisor on $X$.

If  $B$ is an integral component in the base locus of $|D|$, 
then $B$ is  either a $(-1)$- or a $(-2)$-curve (since otherwise 
$h^0(X,B)>1$ by Proposition \ref{har}).
Moreover, the multiplication map 
\[
\xymatrix{
H^0(X,D-B)\ar[r]^-{\cdot f_B}& H^0(X,D)
}
\]
by the section $f_B$ defining $B$  is an isomorphism.
Since $d(D-B)<d(D)$, then $H^0(X,D)$ is generated 
by distinguished sections by induction.

If  $D^2=0$ and $|D|$ is base point free 
then $H^0(X,D)$ is generated by distinguished sections  
by Proposition \ref{proposition: D2=0}.
Thus in what follows we can assume $D$ to be nef, big and 
$-K_X\cdot D\geq 2$ by Proposition \ref{har}.
Observe that we can also assume that $D$ is not ample 
and that $D^{\perp}$ contains both $(-1)$-curves and 
$(-2)$-curves by Proposition \ref{ample}, Proposition \ref{-1} 
and the induction hypothesis.
%since otherwise $D$ would be ample 
%and $\R(X)$ is not generated in degree $[D]$ 
%by Proposition \ref{ample}.

We now assume that  $D^{\perp}$ 
has a unique connected component $N$. 
Observe that $N$ is a generalized $(-1)$-curve 
by the previous assumptions and, since it is connected,
%or a generalized $(-2)$-curve by Lemma \ref{N} 
the $(-2)$-curves in its support  
are components of a fiber $F$ of $\pi$. 
Let $A$ be a $(-2)$-curve in $N$.
%If $N$ is a $(-1)$-curve, 
%we conclude by Proposition \ref{-1}.
%Thus we can assume that $N$ 
%contains a $(-2)$-curve $A$. 

If there exists a $(-1)$-curve $B$ such that $B\cap N=\emptyset$, 
then $D-B$ is nef, so that 
$|D-B|$ contains no $(-2)$-curves 
in the base locus by Proposition \ref{har}.
The curves $A, B$ satisfy the hypothesis of Lemma \ref{AB}, 
so that $H^0(X,D)$ is generated by sections in lower 
degrees $[D-A]$ and $[D-B]$,
thus we conclude by the induction hypothesis.
On the other hand, if any $(-1)$-curve intersects $N$, 
there are two possible cases:\\
i) The external $(-2)$-curve of $N$ intersects 
a $(-1)$-curve $E$. In this case
let 
\[
Q=\sum_{i=1}^rN_i+E,
\] 
where $N_i$ 
are the components of $N$.
Observe that $Q$ is a conic bundle. 
We now show that $D-Q$ is nef, i.e. 
it has non-negative intersection with any 
negative curve $C$.
If $C$ is contained in the support of $Q$,
then $C\cdot Q=0$, so that $(D-Q)\cdot C\geq 0$.
If $C$ is not contained in the support of $Q$ 
and $C\cdot Q\leq 1$,  then clearly $(D-Q)\cdot C\geq 0$ 
since $D\cdot C> 0$.
Otherwise $C$ is not in the support of $Q$, 
$C\cdot Q=2$ and the union of $C$ with 
the $(-2)$-curves in $Q$ is the support of a fiber 
of $\pi$ of type ${\rm I}_n$, $n\geq 2$.
Since $D\cdot C=D\cdot (-K_X)\geq 2$, 
again we obtain that $(D-Q)\cdot C\geq 0$.
By Proposition \ref{har} the curves $A, Q$ 
satisfy the hypothesis of Lemma \ref{AB}, 
thus we conclude by induction hypothesis.\\
ii) The external $(-2)$-curve of $N$ does not intersects 
any $(-1)$-curve. By looking 
at Table \ref{int}, we deduce that $X$ is isomorphic to 
either $X_{9111}, X_{22}$ or $X_{211}$.
In these cases, if any $(-1)$-curve intersects $N$ and 
the external $(-2)$-curve of $N$ does not intersects 
any $(-1)$-curve, then we can assume that
 \[
 N=P_0+\sum_{i=0}^7\Theta_i.
 \]
 The morphism which contracts $N$ maps $X$ to $\pp^2$,
 thus $D=nL$, where $L$ is the pull-back of a line in $\pp^2$ via this morphism.
In this case $H^0(X,D)=\Sym^nH^0(X,L)$, so that it is generated by elements 
in $H^0(X,L)$. Since a basis of $H^0(X,L)$ is given by distinguished sections 
of type (iv), then we conclude.

We now assume that $D^{\perp}$ has at least two connected components 
$N_1,N_2$. 
%By Proposition \ref{-1} we can assume that the component $N_2$
%contains $(-2)$-curves. 
%so that it is  of type $A_n$, $n\geq 1$.
Let $A\in N_1$ and $B\in N_2$.
By Proposition \ref{bl}, either $A$ is not contained in the base locus of $|D-B|$ 
or $D\sim-K_X+A+B$.
In the first case 
$H^0(X,D)$ is generated by $H^0(X,D-A)$, $H^0(X,D-B)$ 
and by defining sections of curves with negative 
self-intersection by Lemma \ref{AB},  
thus we conclude by the induction hypothesis.
In the second case we conclude by Lemma \ref{kab}. 
\end{proof}

\begin{corollary} A minimal set of generators for $\R(X)$ 
is given by the distinguished sections  which define
 $(-1)$-curves, $(-2)$-curves,  a smooth fiber for each 
 conic bundle having a unique reducible fiber, 
 a smooth fiber of $\pi$ if it has a unique reducible fiber
and divisors of type $(iv)$ such that $D^{\perp}$ contains a 
generalized $(-1)$-curve of length nine.
\end{corollary}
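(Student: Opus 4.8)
The plan is to extract the minimal generating set from Theorem \ref{teo} by deciding, for each type of distinguished section in Definition \ref{dis}, which ones are genuinely needed and which are redundant because they lie in the subalgebra generated by the others. Recall from the paper's earlier discussion that $\R(X)$ is generated in degree $[D]$ precisely when $H^0(X,D)$ is not spanned by distinguished polynomials built from strictly ''smaller'' generators. So the work is a case analysis over the four types (i)--(iv) of Definition \ref{dis}, determining in each case exactly the degrees $[D]$ for which a new generator is forced.

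First I would handle types (i) and (iii). For the $(-1)$- and $(-2)$-curves, each has $h^0 = 1$ and its defining section is a prime element of $\R(X)$ (the effective cone is generated by these negative classes by \cite[Proposition 1.1]{ArLa}), so none of these can be a product or sum of others: they are all needed. For $-K_X$ (type (iii)): by Proposition \ref{proposition: D2=0}, applied with $D = F$ a smooth fiber, $\R(X)$ is generated in degree $[F] = -K_X$ if and only if $\pi$ has a unique reducible fiber; so exactly one distinguished section of this degree is needed, and only for those surfaces. This matches the statement's clause ``a smooth fiber of $\pi$ if it has a unique reducible fiber.''

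Next, type (ii), the conic bundles. Again by Proposition \ref{proposition: D2=0}, for a conic bundle $D$ the ring $\R(X)$ is generated in degree $[D]$ if and only if the associated rational fibration has a unique reducible fiber; and in that case $h^0(X,D)=2$, so we need the two distinguished sections defining a smooth fiber of that fibration (the reducible fiber's section is already a distinguished polynomial in negative-curve sections, as Figure \ref{cbred} and the multiplicities there make explicit). The surfaces and divisor classes where this happens are exactly those tabulated in Proposition \ref{proposition: D2=0}. So the contribution of type (ii) is ``a smooth fiber for each conic bundle having a unique reducible fiber.''

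The main obstacle is type (iv): the nef divisors $D$ with $D^2=1$, $-K_X\cdot D=3$. Here $h^0(X,D)=3$ by Riemann--Roch and Proposition \ref{har}, and I need to decide when these three sections are not already distinguished polynomials in lower-degree generators. The key is to run the argument inside the proof of Theorem \ref{teo}: for a nef and big $D$ with $-K_X\cdot D\geq 2$ that is not ample, one applies Lemma \ref{AB} or Lemma \ref{kab} to reduce to lower degree, \emph{except} in the single stubborn situation identified there — when $D^\perp$ has a unique connected component $N$ which is a generalized $(-1)$-curve whose external $(-2)$-curve meets no $(-1)$-curve, forcing $N = P_0 + \sum_{i=0}^7\Theta_i$ (a generalized $(-1)$-curve of length nine) and $D = nL$ with $L$ the pullback of a line under the contraction of $N$ to $\pp^2$. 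For $n=1$ we get $L$ itself, which has $L^2=1$, $-K_X\cdot L=3$: this is a genuinely new generator of type (iv), and $H^0(X,D)=\Sym^n H^0(X,L)$ shows no higher $n$ contributes. So I must verify: (a) such a length-nine generalized $(-1)$-curve exists only on $X_{9111}$, $X_{22}$, $X_{211}$ (read off from Table \ref{int}, as in the proof of Theorem \ref{teo}); (b) on those surfaces the class $[L]$ is not obtainable from negative curves, conic bundles and $-K_X$ — which follows because the base-point-free pencils already accounted for have $D^2=0$, while $L^2=1$, and any expression of a section of $L$ as a distinguished polynomial would have to involve a nonconstant polynomial in sections of strictly smaller degree whose degrees sum to $[L]$, contradicting $-K_X\cdot L=3$ together with $L^2=1$ and the structure of $D^\perp$. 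Assembling (i)--(iv) gives precisely the list in the corollary; the minimality assertion is exactly the statement that in each of these degrees $\R(X)$ is generated, i.e. every minimal generating set must include an element there, which is what Propositions \ref{proposition: D2=0} and the $n=1$ case above establish.
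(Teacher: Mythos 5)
Your proposal is correct and follows essentially the same route as the paper: reduce via Theorem~\ref{teo}, read off the $-K_X$ and conic-bundle contributions from Proposition~\ref{proposition: D2=0}, and for type (iv) observe that $\varphi_{|D|}$ is a birational morphism to $\pp^2$ which forces a new generator exactly when a generalized $(-1)$-curve of length nine is contracted to a point (occurring only on $X_{22}$, $X_{211}$, $X_{9111}$). The only soft spot is your justification of (b) by degree counting --- distinguished monomials of degree $[L]$ do exist (pullbacks of lines through the contracted point, which factor as a conic-bundle section times negative-curve sections); the real point is that these span only a two-dimensional subspace of the three-dimensional $H^0(X,L)$ --- but this is at the same level of detail as the paper's own assertion.
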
 
\begin{proof}
By Theorem \ref{teo} it in enough to find a set of 
distinguished sections which gives a minimal generating set of $\R(X)$.
A distinguished section of degree $[-K_X]$ is contained in such set 
if and only if the elliptic fibration has a unique reducible fiber. 
Similarly, the only conic bundles giving generators of $\R(X)$ are those 
with a unique reducible fiber, which have been classified in Proposition \ref{proposition: D2=0}. 
Finally, if $D$ is of type (iv), then $\varphi:=\varphi_{|D|}: X\to \pp^2$ is a birational morphism.
 In this case $H^0(X,D)=\varphi^*H^0(\pp^2,\Osh_{\pp^2}(1))$  is generated by sections defining negative curves unless 
 the exceptional divisor of $\varphi$  contracts a generalized $(-1)$-curve 
 of length nine to one point. A divisor $D$ with this property only exists 
 for the surfaces $X_{22}, X_{211}$ and $X_{9111}$ 
 (in the last case there are three such divisors, exchanged by the Mordell-Weil group).
 \end{proof}

%We recall that the quotient of an open 
%subset of the affine variety ${\rm Spec}\, \R(X)$ 
%for the action of a $10$-dimensional torus 
%is isomorphic to $X$ \cite{}, thus $r-\dim(I(X))-10=2.$

\section{Elliptic surfaces of
complexity one}\label{c*}

This section deals with 
extremal elliptic surfaces
$X$ which admit an action 
of the torus $T=\cc^*$.
These are the  surfaces  
$X_{22}, X_{33}, X_{44}$ and 
the surfaces $X_{11}(a)$,  where $a\in \cc-\{0,1\}$ 
(see~\cite[Proposition 9.2.17]{Du}).
The action of the torus is given as follows, where $c\in T$:
%\begin{proposition} Let $\varphi:X\to \pp^1$ be a rational elliptic surface. The following are equivalent:
%\begin{enumerate} 
%\item $\varphi$ has at most two singular fibers;
%\item the identity component of $\Aut(X)$ is isomorphic to $\cc^*$;
%\item $X$ is obtained by blowing up (nine times) $\pp^2$ at the base points of the following pencils of plane cubic curves:
\begin{center}
\begin{tabular}{ll}
%Surface &  $T$-action\\
 %\midrule
 $X_{22}$: 
 %& $z_0(x_1^3+x_0^2x_2)+z_1x_2^3=0$
 & $[x_0:x_1:x_2] \mapsto  [cx_0:x_1:c^{-2}x_2]$\\
 $X_{33}$:
 %& $z_0x_0(x_0x_2-x_1^2)+z_1x_2^3=0$
 & $[x_0:x_1:x_2] \mapsto [cx_0:x_1:c^{-1}x_2]$\\
 $X_{44}$:
 %& $z_0x_1x_2(x_1-x_2)+z_1x_0^3=0$
 & $[x_0:x_1:x_2] \mapsto [cx_0:x_1:x_2]$\\
 $X_{11}(a)$: 
 %& $z_0x_1x_2(x_1-x_2)+z_1(x_1-ax_2)x_0^2=0$
 & $[x_0:x_1:x_2] \mapsto [cx_0:x_1:x_2]$.
\end{tabular}
\end{center}
\vspace{3mm}

%
%\begin{center}
%\begin{tabular}{l|r|l}
%Type & Pencil of cubics& $T$-action\\
% \midrule
% ${\rm II}^*\, {\rm II}$ 
% & $z_0(x_1^3+x_0^2x_2)+z_1x_2^3=0$
% & $[x_0:x_1:x_2] \mapsto  [cx_0:x_1:c^{-2}x_2]$\\
% ${\rm III}^*\, {\rm III}$
% & $z_0x_0(x_0x_2-x_1^2)+z_1x_2^3=0$
% & $[x_0:x_1:x_2] \mapsto [cx_0:x_1:c^{-1}x_2]$\\
% ${\rm IV}^*\, {\rm IV}$
% & $z_0x_1x_2(x_1-x_2)+z_1x_0^3=0$
% & $[x_0:x_1:x_2] \mapsto [cx_0:x_1:x_2]$\\
% ${\rm 2I}_0^*$ 
% & $z_0x_1x_2(x_1-x_2)+z_1(x_1-ax_2)x_0^2=0$
% & $[x_0:x_1:x_2] \mapsto [cx_0:x_1:x_2]$.
%\end{tabular}
%\end{center}
%\vspace{3mm}
We will compute the Cox ring 
of such surfaces by means 
of~\cite[Theorem 3.18]{Hau}. 
We recall that a fixed point for the $T$-action
is called {\em elliptic} if it lies 
in the closure of infinitely
many orbits.
In order to compute the 
Cox ring we proceed in two steps, following 
the technique explained
in~\cite[\S 3.3]{Hau}:
first of all we produce a
$T$-equivariant blow-up
$\phi: X'\to X$ such that
$X'$ does not contain any 
elliptic fixed point,
then we construct
the intersection graph of
the negative curves of $X'$,
i.e. the {\em Orlik-Wagreich graph} of $X'$.
This information is enough
to give a presentation of
the Cox ring of $X'$, from
which one obtains
a presentation for the Cox
ring of $X$ putting equal to $1$ the 
variables corresponding to the exceptional 
divisors of $\phi$. We will discuss in detail 
one example, the other ones being similar.
\subsection{The surface $X_{22}$}
The surface $X_{22}$ contains 
one elliptic fixed point
at the cusp of the 
fiber of  type ${\rm II}$, whose
plane model is the curve
of equation $x_1^3+x_2x_0^2=0$,
and a curve of fixed points 
which corresponds to the the trivalent component 
of the fiber of type ${\rm II}^*$. 
Blowing-up the cusp three times
we obtain a surface $X_{22}'$ 
which does not contain elliptic points. 
The intersection graph of its negative 
curves is given in Figure \ref{X_22}.
%\begin{tikzpicture}
% \node[circle,draw, fill=gray!20] (F1) at (0,0) {$-2$};
 %\node[circle,draw, fill=gray!20] (2) at (1,1) {$-2$};
% \node[circle,draw, fill=gray!20] (3) at (2,1) {$-2$};
% \node[circle,draw, fill=gray!20] (4) at (3,1) {$-2$};
% \node[circle,draw, fill=gray!20] (5) at (4,1) {$-2$};
 %\node[circle,draw, fill=gray!20] (6) at (5,1) {$-2$};
% \node[circle,draw, fill=gray!20] (7) at (7,1) {$-1$};
% \node[circle,draw] (8) at (9,1) {$-6$};
% \node[circle,draw, dashed] (F2) at (10,0) {$-1$};
 %\node[circle,draw, fill=gray!20] (10) at (1.5,0) {$-2$};
 %\node[circle,draw] (11) at (7,0) {$-1$};
% \node[circle,draw, dashed] (12) at (8.5,0) {$-2$};
% \node[circle,draw, fill=gray!20] (13) at (1,-1) {$-2$};
% \node[circle,draw, fill=gray!20] (14) at (2,-1) {$-2$};
 %\node[circle,draw] (15) at (7,-1) {$-1$};
% \node[circle,draw, dashed] (16) at (9,-1) {$-3$};
 %\draw (F1) -- (2) -- (3) -- (4) -- (5) -- (6) -- (7) -- (8) -- (F2);
 %\draw (F1) -- (10) -- (11) -- (12) -- (F2);
% \draw (F1) -- (13) -- (14) -- (15) -- (16) -- (F2);
%\end{tikzpicture}
The four right-hand side  
vertices come from the blow-up of the cusp 
(the $(-1)$-curve there is pointwise fixed),
the left hand side is 
the graph of the fiber of type ${\rm II}^*$,
while the central $(-1)$-curves 
are orbits whose self-intersections 
are uniquely determined by the following conditions 
on the Hirzebruch-Jung continued fractions: 
\[
[2,2,2,2,2,1,6] =0,\ [2,1,2]=0,\ [2,2,1,3]=0.
\]
The partial quotients of the previous 
continued fractions 
give the orders of
the isotropy subgroups of
the $T$-action on the generic
points of the negative curves of $X_{22}'$ 
according to~\cite[pag. 31]{Hau}.
This allows to write down a presentation for 
$\R(X_{22}')$ by~\cite[Theorem 3.18]{Hau} 
and of $\R(X_{22})$ by putting equal to $1$ the 
variables corresponding to the exceptional 
divisors of the blow-up $X_{22}'\to X_{22}$ 
(those corresponding 
to vertices with white circle in Figure \ref{X_22}).
The ring $\R(X_{22})$ is given 
in Table \ref{cox}, where the variable 
$T_7$ is
%\[
% \cc[T_1,\dots,T_{14},S_1,S_2]/
% (T_1T_2^2T_3^3T_4^4T_5^5T_6^6T_7
% +
% T_8T_9^2T_{10}
% +
% T_{11}T_{12}^2T_{13}^3T_{14}
% )
%\]
%\[
% \R(X_{22})
 %\cong
 %\cc[T_1,\dots,T_{12},S_1]/
 %(T_1T_2^2T_3^3T_4^4T_5^5T_6^6T_7
 %+
 %T_8T_9^2
 %+
 %T_{10}T_{11}^2T_{12}^3
 %),
%\]
the defining section of the left trivalent vertex in the graph 
and 
$T_8,\ldots, T_{13}, T_4, T_1,T_3,T_6,T_5,T_2$ are the defining 
%$T_{4},T_8,\dots, T_{12}, T_{13},T_1, T_3,T_2,T_5,T_6$ are the defining 
sections of the remaining black vertices,
ordered row by row from the left to the right. 
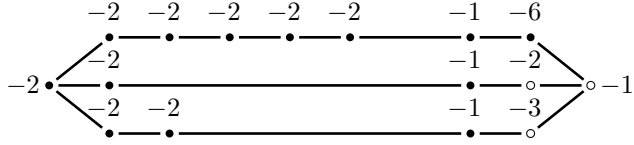
\begin{figure}[h!]
\begin{center}
\begin{tikzpicture}[scale=0.8]
\newcommand{\ndw}
 {\node[circle,draw, fill=white,inner sep=0pt,outer sep=2pt,minimum size=3pt]}
 \newcommand{\nd}
 {\node[circle,fill=black,inner sep=0pt,outer sep=2pt,minimum size=3pt]}
 \path (0,0) coordinate (origin);
 \foreach \x/\y/\i/\j in
{1/.8/1/-2,2/.8/2/-2,3/.8/3/-2,4/.8/4/-2,5/.8/5/-2,7/.8/6/-1,8/.8/7/-6}
{
  \nd (E\i) at (\x,\y) {};
  \node[above] at (\x-0.1,\y+0.1) {$\j$};
 }
 \foreach \x/\y/\i/\j in 
 {8/0/11/-2}
{
  \ndw (E\i) at (\x,\y) {};
  \node[above] at (\x-0.1,\y+0.1) {$\j$};
 }
  \foreach \x/\y/\i/\j in {1/0/9/-2,7/0/10/-1}
 {
  \nd (E\i) at (\x,\y) {};
  \node[above] at (\x-0.1,\y+0.1) {$\j$};
 }
 \foreach \x/\y/\i/\j in {1/-.8/12/-2,2/-.8/13/-2,7/-.8/14/-1}
 {
  \nd (E\i) at (\x,\y) {};
  \node[above] at (\x-0.1,\y+0.1) {$\j$};
 }
  \foreach \x/\y/\i/\j in {8/-.8/15/-3}
  {
  \ndw (E\i) at (\x,\y) {};
  \node[above] at (\x-0.1,\y+0.1) {$\j$};
 }
 \nd (S1) at (0,0) {};
 \ndw (S2) at (9,0) {};
 \node[left] at (0,0) {$-2$};
 \node[right] at (9,0) {$-1$};
 \draw[line width=1pt] (S1) -- (E1) -- (E2) -- (E3) -- (E4)-- (E5) -- (E6) --
(E7) -- (S2);
 \draw[line width=1pt] (S1) -- (E9) -- (E10) -- (E11) -- (S2);
 \draw[line width=1pt] (S1) -- (E12) -- (E13) -- (E14) -- (E15) -- (S2);
\end{tikzpicture}
\end{center}
\caption{The surface $X_{22}$} \label{X_22}
\end{figure}

We observe that the sections $x_1, x_0, x_2, x_1^3+x_0^2x_2$ 
define the images in $\pp^2$ of the curves defined by $T_2, T_3, T_1$ and 
$T_4$ respectively.
The unique relation of the Cox ring corresponds to 
the relation among the three cubics 
$x_1^3,\ x_0^2x_2,\ x_1^3+x_0^2x_2$,
which are elements of the pencil of cubics having $x_2=0$ as inflectional 
tangent in $(1:0:0)$ and which have a cusp at $(0:0:1)$ whose principal tangent is $x_0=0$.

\subsection{The surface $X_{33}$}
The surface $X_{33}$ contains
one elliptic fixed point,
the singular point
of the fiber of type ${\rm III}$, 
and a curve of fixed points,
given by the trivalent  
component of the fiber 
of type ${\rm III}^*$.
Blowing-up the elliptic point twice 
we obtain the surface $X'$ 
whose intersection 
graph of negative curves 
is given in Figure \ref{X_33}.
%The four right-hand side  
%vertices in the figure come 
%from the blow-up of the elliptic point, 
%the left  hand side is 
%the graph of the fiber of type ${\rm III}^*$,
%while the central $(-1)$-curves 
%are orbits whose self-intersections 
%In this case the self-intersections of the 
%central curves are determined by the following 
%Hirzebruch-Jung continued fractions 
%\[
%[2,2,2,1,4] =0,\ [2,1,2]=0,\ [2,2,2,1,4]=0.
%\]
%Observe that the upper and lower $(-1)$-curves in the graph 
%are the two sections of the fibration. 
The ring $\R(X_{33})$ is thus computed as before and 
is  given in Table \ref{cox} where the variable 
$T_6$ is
the defining section of the left trivalent vertex in the graph 
and 
$T_1, T_{11}, T_{12}, T_{13}, T_2, T_5,T_4,T_7,T_8,T_9,T_{10}, T_3$ are the defining 
sections of the remaining black vertices,
ordered row by row from the left to the right.

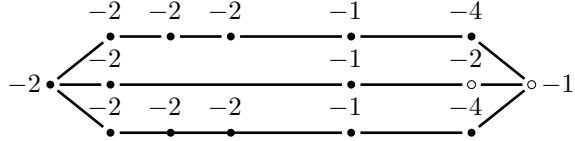
\begin{figure}[h!]
\begin{center}
\begin{tikzpicture}[scale=0.8]
\newcommand{\ndw}
 {\node[circle,draw, fill=white,inner sep=0pt,outer sep=2pt,minimum size=3pt]}
 \newcommand{\nd}
 {\node[circle,fill=black,inner sep=0pt,outer sep=2pt,minimum size=3pt]}
 \path (0,0) coordinate (origin);
 \foreach \x/\y/\i/\j in
{1/.8/1/-2,2/.8/2/-2,3/.8/3/-2,5/.8/4/-1,7/.8/5/-4}
{
  \nd (E\i) at (\x,\y) {};
  \node[above] at (\x-0.1,\y+0.1) {$\j$};
 }
 \foreach \x/\y/\i/\j in 
 {7/0/11/-2}
{
  \ndw (E\i) at (\x,\y) {};
  \node[above] at (\x-0.1,\y+0.1) {$\j$};
 }
  \foreach \x/\y/\i/\j in {1/0/9/-2,5/0/10/-1}
 {
  \nd (E\i) at (\x,\y) {};
  \node[above] at (\x-0.1,\y+0.1) {$\j$};
 }
 \foreach \x/\y/\i/\j in {1/-.8/12/-2,2/-.8/13/-2,3/-.8/13/-2,5/-.8/13/-1,7/-.8/14/-4}
 {
  \nd (E\i) at (\x,\y) {};
  \node[above] at (\x-0.1,\y+0.1) {$\j$};
 }
 \nd (S1) at (0,0) {};
 \ndw (S2) at (8,0) {};
 \node[left] at (0,0) {$-2$};
 \node[right] at (8,0) {$-1$};
 \draw[line width=1pt] (S1) -- (E1) -- (E2) -- (E3) -- (E4)-- (E5) --(E6)-- (S2);
 \draw[line width=1pt] (S1) -- (E9) -- (E10) -- (E11) -- (S2);
 \draw[line width=1pt] (S1) -- (E12) -- (E13) -- (E14)  -- (S2);
\end{tikzpicture}
\end{center}
\caption{The surface $X_{33}$} \label{X_33}
\end{figure}
\subsection{The surface $X_{44}$}
The surface $X_{44}$ contains
one elliptic fixed point
which is the singular point
of the fiber of type ${\rm IV}$.
Blowing-up this point once
we obtain the surface $X_{44}'$
whose graph of negative
curves is given in Figure \ref{X_44}. 
The ring $\R(X_{44})$ is thus computed as before and 
is  given in Table \ref{cox}
where the variable 
$T_1$ is
the defining section of the left trivalent vertex in the graph 
and 
$T_5$,$T_6$,$T_7$,$T_2$,$T_8$,$T_9$,$T_{10}$,$T_3$,$T_{11}$,$T_{12}$, $T_{13}$,$T_4$ are the defining 
sections of the remaining black vertices,
ordered row by row from the left to the right. 

\begin{figure}[h!]
\begin{center}
\begin{tikzpicture}[scale=0.8]
\newcommand{\ndw}
 {\node[circle,draw, fill=white,inner sep=0pt,outer sep=2pt,minimum size=3pt]}
 \newcommand{\nd}
 {\node[circle,fill=black,inner sep=0pt,outer sep=2pt,minimum size=3pt]}
 \path (0,0) coordinate (origin);
 \foreach \x/\y/\i/\j in
{1/.8/1/-2,2/.8/2/-2,4/.8/3/-1,6/.8/4/-3}
{
  \nd (E\i) at (\x,\y) {};
  \node[above] at (\x-0.1,\y+0.1) {$\j$};
 }
% \foreach \x/\y/\i/\j in 
 %{7/0/11/-2}
%{
  %\ndw (E\i) at (\x,\y) {};
  %\node[above] at (\x-0.1,\y+0.1) {$\j$};
 %}
  \foreach \x/\y/\i/\j in {1/0/9/-2,2/0/10/-2,4/0/11/-1,6/0/12/-3}
 {
  \nd (E\i) at (\x,\y) {};
  \node[above] at (\x-0.1,\y+0.1) {$\j$};
 }
 \foreach \x/\y/\i/\j in {1/-.8/12/-2,2/-.8/13/-2,4/-.8/14/-1,6/-.8/14/-3}
 {
  \nd (E\i) at (\x,\y) {};
  \node[above] at (\x-0.1,\y+0.1) {$\j$};
 }
  
 \nd (S1) at (0,0) {};
 \ndw (S2) at (7,0) {};
 \node[left] at (0,0) {$-2$};
 \node[right] at (7,0) {$-1$};
 \draw[line width=1pt] (S1) -- (E1) -- (E2) -- (E3) -- (E4)-- (S2);
 \draw[line width=1pt] (S1) -- (E9) -- (E10) -- (E11) -- (S2);
 \draw[line width=1pt] (S1) -- (E12) -- (E13) -- (E14)  -- (S2);
\end{tikzpicture}
\end{center}
\caption{The surface $X_{44}$} \label{X_44}
\end{figure}
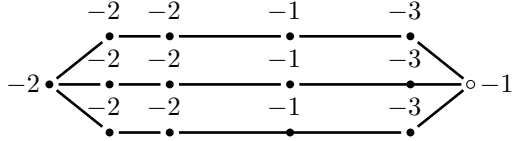

\subsection{The surface $X_{11}(a)$}
 In this case we have a one parameter
family of surfaces $X_{11}(a)$, $a\in \cc-\{0,1\}$.
%A smooth fiber of $X_{11}(a)$
%is an elliptic curve of $j$-invariant
%\[
% 256\frac{(a^2-a+1)^3}{a^2(a-1)^2}.
%\]
%In particular the fibration
%is locally isotrivial in 
%a neighborhood of any smooth
%fiber.
The surface $X_{11}(a)$ does
not contain elliptic fixed
points and its graph of negative
curves is given in Figure \ref{X_11a}  where the variables 
$T_{12}$ and $T_4$ are
the defining sections of the left fourvalent vertex and of the right fourvalent vertex respectively
and 
$T_1, T_7, T_6, T_2, T_9, T_8,T_3,T_{11},T_{10},T_{13},T_{14}, T_5$ are the defining 
sections of the remaining vertices,
ordered row by row from the left to the right. 
In this case the Cox ring, which is given 
in Table \ref{cox}, has two 
relations in the same degree. 
These relations correspond to the relations 
among the four lines $x_1=0$, $x_2=0$, 
$x_1-x_2=0$,  $x_1-ax_2=0$ of the pencil 
of lines in $\mathbb{P}^2$ through $(1:0:0)$.
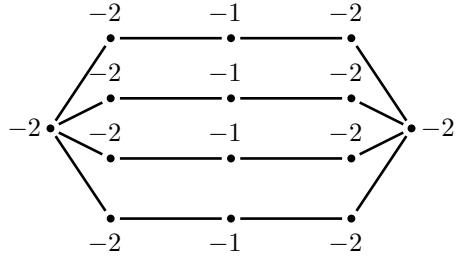
\begin{figure}[h!]
\begin{center}
\begin{tikzpicture}[scale=0.8]
\newcommand{\ndw}
 {\node[circle,draw, fill=white,inner sep=0pt,outer sep=2pt,minimum size=3pt]}
 \newcommand{\nd}
 {\node[circle,fill=black,inner sep=0pt,outer sep=2pt,minimum size=3pt]}
 \path (0,0) coordinate (origin);
 \foreach \x/\y/\i/\j in
{1/1.5/1/-2,3/1.5/2/-1,5/1.5/3/-2}
{
  \nd (E\i) at (\x,\y) {};
  \node[above] at (\x-0.1,\y+0.1) {$\j$};
 }
% \foreach \x/\y/\i/\j in 
 %{7/0/11/-2}
%{
  %\ndw (E\i) at (\x,\y) {};
  %\node[above] at (\x-0.1,\y+0.1) {$\j$};
 %}
  \foreach \x/\y/\i/\j in {1/0.5/4/-2,3/0.5/5/-1,5/0.5/6/-2}
 {
  \nd (E\i) at (\x,\y) {};
  \node[above] at (\x-0.1,\y+0.1) {$\j$};
 }
 \foreach \x/\y/\i/\j in {1/-.5/7/-2,3/-.5/8/-1,5/-.5/9/-2}
 {
  \nd (E\i) at (\x,\y) {};
  \node[above] at (\x-0.1,\y+0.1) {$\j$};
 }
  \foreach \x/\y/\i/\j in {1/-1.5/10/-2,3/-1.5/11/-1,5/-1.5/12/-2}
 {
  \nd (E\i) at (\x,\y) {};
  \node[below] at (\x-0.1,\y-0.1) {$\j$};
 }
 \nd (S1) at (0,0) {};
 \nd (S2) at (6,0) {};
 \node[left] at (0,0) {$-2$};
 \node[right] at (6,0) {$-2$};
 \draw[line width=1pt] (S1) -- (E1) -- (E2) -- (E3) -- (S2);
 \draw[line width=1pt] (S1) -- (E4) -- (E5) -- (E6) -- (S2);
  \draw[line width=1pt] (S1) -- (E7) -- (E8) -- (E9) -- (S2);
 \draw[line width=1pt] (S1) -- (E10) -- (E11) -- (E12)  -- (S2);
\end{tikzpicture}
\end{center}
\caption{The surface $X_{11}(a)$} \label{X_11a}
\end{figure}

 \section{Relations of the Cox ring}
In this section we will determine the ideal of relations 
$I(X)$ of $\R(X)$, where $X$ is an extremal 
rational elliptic surface, following
\cite[Construction 3.1]{BHK}. 

\subsection{Algebraic preliminaries}
Let $\cc[T,S]$ be a polynomial ring in $m+n$ variables 
$T_1,\dots, T_m,S_1,\dots,S_n$ graded by an abelian group 
$K_{T}\oplus K_{S}$, where $K_S$ is free.
We assume that the degrees of the $S$ variables give a basis of $K_S$.
In what follows $\cc[T]$ denotes the polynomial ring in the first $m$ variables graded 
by $K_{T}$, $\cc[T,S^{\pm 1}]$ is the localization of $\cc[T,S]$ with respect 
to all the $S$ variables and  $\cc[T,S^{\pm 1}]_0$ is its degree zero part 
with respect to the $K_S$ grading. 
Assume the following diagram of homomorphisms is given:
\[
\xymatrix{
I\ar[r]& \cc[T,S]\ar[r]^{\psi} \ar[d]_{\iota}& R\\
& \cc[T,S^{\pm 1}] &  \cc[T]\ar[u]_{\sigma}\ar[dl]^{\alpha}\ar[ul]_{\beta}\\
& \cc[T,S^{\pm 1}]_0\ar[u]^{\iota_0} & J\ar[u]
}
\]
where $\psi$ is a graded surjective homomorphism to a $K_T\oplus K_S$-graded 
domain $R$ with kernel $I$, $\sigma$ is a $K_T$-graded homomorphism 
with kernel $J$, $\iota$ and $\iota_0$ are the natural inclusions, $\sigma=\psi\circ \beta$ and 
\[
\iota_0\circ\alpha(T_i)=\iota\circ\beta(T_i)=T_i\cdot m_i(S),
\]
where $m_i(S)\in \cc[S]$ is the unique monomial in the variables $S$ such that 
$T_i\cdot m_i(S)$ has $K_S$ degree equal to zero (the uniqueness follows from the hypothesis 
on the degrees of the $S$ variables).  We observe that $\alpha$ is an isomorphism.
\begin{proposition}\label{ideal}
 Let $J'\subset \cc[T,S]$ be the extension and contraction of the ideal $\alpha(J)$ (by $\iota_0$ and $\iota$). Then $J'\subset I$.
 \end{proposition}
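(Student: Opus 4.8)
The plan is to track an arbitrary element $f\in J'$ through the diagram and show that $\psi(f)=0$, which is exactly the assertion $J'\subseteq I=\ker\psi$. First I would unwind the definition of $J'$: by hypothesis $J'$ is obtained from the ideal $\alpha(J)\subseteq\cc[T,S^{\pm1}]_0$ by first extending along $\iota_0$ to an ideal of $\cc[T,S^{\pm1}]$ and then contracting along $\iota$ to an ideal of $\cc[T,S]$. So a typical element of $J'$ is a polynomial $f\in\cc[T,S]$ whose image $\iota(f)\in\cc[T,S^{\pm1}]$ lies in the ideal $\iota_0(\alpha(J))\cdot\cc[T,S^{\pm1}]$; concretely $\iota(f)=\sum_k h_k\cdot\iota_0(\alpha(g_k))$ for finitely many $g_k\in J$ and $h_k\in\cc[T,S^{\pm1}]$.

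The key step is to extend $\psi$ to the localization. Since $R$ is a $K_T\oplus K_S$-graded domain and $\psi$ is graded and surjective, the images $\psi(S_1),\dots,\psi(S_n)$ are nonzero homogeneous elements, hence units are adjoined cleanly: $\psi$ extends uniquely to a graded homomorphism $\tilde\psi:\cc[T,S^{\pm1}]\to R_{S}$, where $R_S$ is the localization of $R$ at the multiplicative set generated by the $\psi(S_j)$, and $\tilde\psi\circ\iota=(\text{localization map})\circ\psi$. Because $R$ is a domain the localization map $R\to R_S$ is injective, so $\psi(f)=0$ in $R$ if and only if $\tilde\psi(\iota(f))=0$ in $R_S$. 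Thus it suffices to prove $\tilde\psi\big(\iota_0(\alpha(g))\big)=0$ in $R_S$ for every $g\in J$, since then $\tilde\psi(\iota(f))=\sum_k\tilde\psi(h_k)\,\tilde\psi(\iota_0(\alpha(g_k)))=0$ and we are done.

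To establish $\tilde\psi\circ\iota_0\circ\alpha=0$ on $J$, I would compare it with $\sigma$ on generators. The relation $\iota_0\circ\alpha(T_i)=\iota\circ\beta(T_i)$ given in the hypothesis says precisely that, on the subring $\cc[T]$, the two composites $\iota_0\circ\alpha$ and $\iota\circ\beta$ into $\cc[T,S^{\pm1}]$ agree. Applying $\tilde\psi$ and using $\tilde\psi\circ\iota=(\text{loc})\circ\psi$ together with $\sigma=\psi\circ\beta$, we get $\tilde\psi\circ\iota_0\circ\alpha=(\text{loc})\circ\psi\circ\beta=(\text{loc})\circ\sigma$ as maps $\cc[T]\to R_S$. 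Hence for $g\in J=\ker\sigma$ we obtain $\tilde\psi(\iota_0(\alpha(g)))=(\text{loc})(\sigma(g))=0$, as required. Combining this with the injectivity of $R\to R_S$ and the expression for $\iota(f)$ above yields $\psi(f)=0$, so $f\in I$; since $f\in J'$ was arbitrary, $J'\subseteq I$.

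The main obstacle is the bookkeeping around the localization: one must check that $\psi$ really does extend to $\cc[T,S^{\pm1}]$ (i.e. that the $\psi(S_j)$ are nonzerodivisors, which holds because $R$ is a domain and $\psi$ is graded so these images are nonzero homogeneous elements), and that the extension is compatible with $\iota$ and with $\iota_0$ in the way used above. A secondary subtlety is purely formal — being careful that "extension and contraction" is applied in the stated order and that the identity $\iota_0\circ\alpha=\iota\circ\beta$ on $\cc[T]$ is exactly what makes the two triangles in the diagram commute after composing with $\tilde\psi$. Once these compatibilities are in place the argument is a short diagram chase.
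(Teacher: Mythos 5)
Your argument is correct and is essentially the paper's proof in different clothing: passing to the localization $R_S$ and using injectivity of $R\to R_S$ is exactly the paper's observation that $I$, being prime, equals its saturation with respect to the $S$-variables, while your identity $\tilde\psi\circ\iota_0\circ\alpha=(\mathrm{loc})\circ\sigma$ repackages the paper's two facts $\beta(J)\subset I$ and $\beta(J)\cdot\cc[T,S^{\pm1}]=\alpha(J)\cdot\cc[T,S^{\pm1}]$. Both versions rest on the same implicit (and, in the application, true) fact that no $\psi(S_j)$ vanishes, so that the multiplicative set in $R$ avoids zero.
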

 \begin{proof}
 Observe that $\beta(J)\subset I$ since $\sigma=\psi\circ \beta$. Moreover, from the equality
 \[
 \beta(J)\cdot \cc[T,S^{\pm 1}]=\alpha(J)\cdot \cc[T,S^{\pm 1}]
 \]
 we get that $J'$ is contained in the saturation of $I$ with respect to the variables $S$.
 Since $R$ is a domain, then $I$ is saturated, hence we get the statement.
 \end{proof}
  
\subsection{Cox rings of blow-ups}
Let  $f:X\to Y$ be a blow-up
morphism, where $X$ and $Y$ are smooth Mori dream spaces.
Let $T_1,\dots,T_m,S_1,\dots,S_n$ be a minimal set of generators 
of $\R(X)$, where $S_1,\dots,S_n$ define the 
irreducible components of the exceptional divisor of $f$.
The polynomial ring $\cc[T,S]$ is graded by 
$\Cl(X)=K_T\oplus K_S$,
where $K_T=f^*\Cl(Y)$ and $K_S$  
is the free abelian group generated by the 
degrees of the $S$ variables.

In this setting we take $R=\R(X)$ 
and $\psi:\cc[T,S]\to \R(X)$ to be the natural quotient 
homomorphism with kernel $I(X)$. 
Observe that, since $\beta(T_i)$ has degree zero 
with respect to the $K_S$-grading,
then $\psi\circ \beta(T_i)=f^*(s_i)$ via  the
pull-back homomorphism $f^*:\R(Y)\to \R(X)$.
Thus we obtain the following result.

 \begin{corollary}\label{J} Let $J$ be the kernel of  the 
$K_T$-graded homomorphism
\[
\phi:\cc[T]\to \R(Y),\quad T_i\mapsto s_i.
\]
Then the ideal $J'$ defined before is contained in $I(X)$ and 
$J'=I(X)$ if $\dim J'=\dim I(X)$.
\end{corollary}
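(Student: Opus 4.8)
The plan is to combine Proposition \ref{ideal} with the specific geometry of the blow-up $f:X\to Y$. First I would observe that the hypotheses of Proposition \ref{ideal} are met in the present setting: the polynomial ring $\cc[T,S]$ is graded by $\Cl(X)=K_T\oplus K_S$ with $K_S$ free and generated by the degrees of the $S$ variables (these being the classes of the exceptional components, which form a basis of the kernel of $f_*$ on divisor classes), and $\psi:\cc[T,S]\to\R(X)$ is the graded quotient map with kernel $I(X)$. Taking $\sigma=\phi\circ(\text{projection to }\cc[T])$, or more precisely constructing $\beta:\cc[T]\to\cc[T,S^{\pm1}]$ by $\beta(T_i)=T_i\cdot m_i(S)$ and $\sigma=\psi\circ\beta$, the key identity to check is that $\psi\circ\beta(T_i)=f^*(s_i)$. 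This holds because $\beta(T_i)$ is the unique $K_S$-homogeneous lift of $T_i$ of $K_S$-degree zero, and $f^*(s_i)\in\R(X)$ indeed has $K_S$-degree zero (its class lies in $K_T=f^*\Cl(Y)$) and agrees with $T_i$ up to exceptional contributions; hence $\sigma$ factors as $\cc[T]\xrightarrow{\phi}\R(Y)\xrightarrow{f^*}\R(X)$ and its kernel is exactly $J=\ker\phi$ (using that $f^*$ is injective on Cox rings for a blow-up of smooth Mori dream spaces). Proposition \ref{ideal} then yields immediately $J'\subset I(X)$, which is the first assertion.

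For the second assertion, the point is that $J'\subset I(X)$ is an inclusion of prime ideals in $\cc[T,S]$ (here $I(X)$ is prime because $\R(X)$ is a domain; one should also note $J'$ is prime, being the contraction-then-extension of the prime $\alpha(J)$ under the localization $\iota_0$, equivalently $J'$ is the saturation of $\beta(J)\cc[T,S]$ with respect to the $S$-variables, and saturations of primes with respect to a multiplicative set are prime). An inclusion $\mathfrak p\subset\mathfrak q$ of prime ideals in a Noetherian domain with $\dim(\cc[T,S]/\mathfrak p)=\dim(\cc[T,S]/\mathfrak q)$ forces $\mathfrak p=\mathfrak q$: otherwise the inclusion would be strict, giving a strictly longer chain of primes above $\mathfrak p$ than above $\mathfrak q$ and hence a strict drop in dimension. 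Therefore $\dim J'=\dim I(X)$ implies $J'=I(X)$.

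The step I expect to require the most care is the verification that $\sigma$ really has kernel equal to $J=\ker\phi$, i.e.\ that passing through the localized ring $\cc[T,S^{\pm1}]$ and its degree-zero part does not enlarge or shrink the relations coming from $Y$; this rests on the injectivity of the pull-back $f^*:\R(Y)\to\R(X)$ and on the compatibility of the $K_T$-grading with the factorization $\sigma=f^*\circ\phi$, which are exactly the features of a blow-up of smooth Mori dream spaces recorded in the excerpt. Once that identification is in place, everything else—the inclusion $J'\subset I(X)$ from Proposition \ref{ideal}, and the primality/dimension argument for equality—is formal. In practice the corollary is useful precisely because $\dim I(X)$ is known (it equals $\dim\R(X)=\dim X+\rk\Cl(X)$) and $\dim J'$ can be computed from a presentation, so the dimension hypothesis is checkable case by case, which is how it will be applied to the extremal rational elliptic surfaces.
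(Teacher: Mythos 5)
Your proof is correct and follows essentially the same route as the paper: the inclusion $J'\subset I(X)$ is obtained from Proposition~\ref{ideal} via the factorization $\sigma=f^*\circ\phi$ and the injectivity of $f^*$ (so that $\ker\sigma=J$), and equality follows because $J'\subset I(X)$ is an inclusion of prime ideals of equal dimension in a finitely generated $\cc$-algebra. The only difference is cosmetic: you verify the primality of $J'$ directly, as the contraction of the prime $\alpha(J)\cdot\cc[T,S^{\pm 1}]$ along $\iota$, whereas the paper simply cites \cite{BHK} for this fact.
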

\begin{proof}
By definition $\sigma=f^*\circ \phi$.  
Since $f^*$ is injective, the first claim in the statement 
follows from Proposition \ref{ideal}.
By \cite[Proposition 3.3]{BHK} $J'$ is prime since $J$ is prime. 
Thus the second claim follows from the fact that $I(X)$ is prime.
\end{proof}
 We recall that the quotient of an open subset of the affine variety 
 ${\rm Spec}\,\R(X)$ for the action of a torus of dimension 
 $\rk \Cl(X)$  is isomorphic to $X$ \cite[]{ADHL}, thus 
 $\dim I(X)=\rk\Cl(X)+2$.

\subsection{The ideal of relations of rational elliptic surfaces}
We recall that any rational elliptic surface $X$ is a blow-up 
of the projective plane at nine points, eventually infinitely near.
Fixed a blow-up morphism $X\to \pp^2$, 
a natural basis for the Picard group of $X$ 
is given by $e_0,e_1,\dots,e_9$,
where $e_0$ is the pull-back 
of the hyperplane class of $\pp^2$
and the $e_i$'s with $i\geq 1$ are the classes of the 
exceptional divisors with $e_i\cdot e_j=-\delta_{ij}$ 
(observe that the exceptional divisors with classes $e_1,\dots,e_9$ 
are not necessarily irreducible if there are infinitely near points).
Thus in this case $K_T\cong \zz$ and $K_S\cong \zz^9$.
 
\begin{example}
We will apply Corollary \ref{J}
to compute the ideal of relations of the Cox ring 
of the surface $X=X_{411}$.
We recall that  $X$ has 
an elliptic fibration with a unique reducible fiber of type 
${\rm I}_4^*$ and Mordell-Weil group of order two.
 The surface can be obtained from the following 
pencil of cubic curves:
\[
(x_0^2x_1+x_2^3+x_1^2x_2)+tx_1x_2^2=0
.\]
Let $l_1: x_1=0$, $l_2: x_2=0$ and $p=(1:0:0)$ 
be their intersection point. 
We call $C$ the smooth cubic given 
by $t=0$.
Observe that the line $l_2$ is 
tangent to $C$ in $q=(0:1:0)$ and the line 
$l_1$ is the inflectional tangent of $C$ at $p$.  
The surface $X$ is obtained blowing-up 
$\mathbb{P}^2$ five times at $p$ 
(that is at $p$ and at four points infinitely near to it)
and four times at $q$. 
We denote by $e_1,\dots,e_5$ the classes of 
the exceptional divisors over $p$ and
by $e_6,\dots,e_9$ those over $q$,
where $e_i\cdot e_j=-\delta_{ij}$ 
and all the $e_i$'s are reducible except 
for $e_5$ and $e_9$, which are taken to be  
the exceptional divisors of the last blow-up 
over $p$ and $q$. 
%The classes of the strict transforms of the lines 
%$l_1$ and $l_2$ are given by $H-E_1-E_2-E_3$ 
%and $H-E_1-E_6-E_7$ respectively.
By Theorem \ref{teo} a set of generators for the Cox ring of $X$ is
given by 
\begin{enumerate}[$\bullet$]
\item the sections $T_{1}, T_{2}$ defining the 
proper transforms of the lines $l_1$ and $l_2$, whose degrees are:
\[
[\Theta_1]=e_0-e_1-e_2-e_3,\quad [\Theta_5]=e_0-e_1-e_6-e_7;
\]
\item three sections $T_{3}, T_{4}, T_{5}$ defining 
smooth fibers of the conic bundles whose degrees are:
\[
\begin{array}{l}
[\Theta_0+\Theta_8+\sum_{i=2}^6\Theta_i+P_0+P_1]=e_0-e_6,\\[3pt]
[2P_0+2\Theta_0+2\sum_{i=2}^6\Theta_i+\Theta_7+\Theta_8]=2e_0-e_6-e_7-e_8-e_9,\\[3pt]
  [2P_1+2\Theta_8+2\sum_{i=2}^6\Theta_i+\Theta_0+\Theta_1]=3e_0-(e_1+\cdots+e_5)-2e_6;
\end{array}
\]
\item the section $T_{6}$ defining the proper transform of the cubic $C$, of degree $3e_0-\sum_{i=1}^9e_i$.
\item the sections $T_7,\dots,T_{15}$ defining the irreducible 
components of the exceptional divisors 
over $p$ and $q$, whose degrees are:
\[
\begin{array}{llll}
[\Theta_0]=e_4-e_5 & [\Theta_i]=e_{5-i}-e_{6-i} & [P_0]=e_5 &(i=2,3,4)\\[3pt]
 [\Theta_6]=e_7-e_8 & [\Theta_7]=e_6-e_7 &  [\Theta_8]=e_8-e_9 &  [P_1]=e_9
\end{array}
\]

\end{enumerate}
As a smooth fiber of the first 
conic bundle we can choose 
the proper transform of the line $x_0=0$.
Moreover, the proper transform of the conic $x_0^2+x_1x_2=0$ 
 is smooth of degree $2e_0-e_6-e_7-e_8-e_9$ 
 and the proper transform of the cubic $x_0^2x_1+x_2^3=0$ 
 is smooth of degree $3e_0-(e_1+\cdots+e_5)-2e_6$,
 thus their defining sections can be taken to be 
generators of $\R(X)$.
With these choices, the sections $s_1,\dots,s_{6}$ defining the image 
of the curves defined by $T_{1},\dots, T_{6}$ are the following:
\[
\begin{array}{c}
s_1=x_1,\ s_2=x_2,\  s_3=x_0,\ s_4=x_0^2+x_1x_2,\\ 
s_5=x_0^2x_1+x_2^3,\ s_6=x_0^2x_1+x_2^3+x_1^2x_2.
\end{array}
\]
A computation with Magma \cite{Magma} shows that 
a set of generators for the ideal $J$ is:
\[
T_{1}T_{2} + T_{3}^2 - T_{4},\ 
T_{1}T_{4} + T_{2}^3 - T_{6},\ 
T_{1}T_{3}^2 + T_{2}^3 - T_{5},
\]
and gives a set  of generators for the ideal $J'$.
This set is the one given in Table \ref{cox}, in fact $J'=I(X)$ 
since a computation with the same program shows that $\dim J'= 12=\dim I(X)$.
%\[
%	T_{7}^2T_{8}^3T_{9}^4T_{10}^2T_{14}T_{15}^2T_{2}^3 - T_{10}^2T_{11}T_{6} + T_{1}T_{4},\quad 
%	-T_{7}^2T_{8}T_{6} + T_{10}T_{5} + T_{11}^2T_{12}^3T_{13}^4T_{14}^3T_{15}^2T_{1}^2T_{2},
%\]
%\[
 %	T_{7}^4T_{8}^4T_{9}^4T_{10}T_{14}T_{15}^2T_{2}^3 - T_{11}^2T_{12}T_{5} + T_{1}T_{3}^2,\quad 
%	-T_{7}^2T_{8}T_{4} + T_{10}T_{3}^2 + T_{11}^4T_{12}^4T_{13}^4T_{14}^3T_{15}^2T_{1}T_{2},
%\]	
%\[ 
%	 -T_{7}^2T_{8}^3T_{9}^4T_{10}T_{11}^2T_{12}^3T_{13}^4T_{14}^4T_{15}^4T_{1}T_{2}^4 - T_{3}^2T_{6} + T_{4}T_{5}.
 %\]
\end{example}

 \begin{remark}
 The set of generators of the ideal $J$ given above is induced by geometric relations among curves in $\mathbb{P}^2$. 
 Infact the three conics $s_1s_2$, $s_3^2$ and $s_4$ 
 belong to the pencil of conics whose tangent in $(0:1:0)$ is  $x_2=0$ 
 and whose tangent in $(0:0:1)$ is $x_1=0$, 
 thus there exists a linear relation among them, which indeed corresponds to the first generator of $J$. 
 The three cubics $s_1s_4$, $s_2^3$ and $s_6$  
 belong to the pencil of cubics which pass through $(1:0:0)$ (resp $(0:0:1)$) 
 and are osculating to the same cubic, $x_0^2x_1+x_1^2x_2+x_2^3=0$, with multiplicity 3 (resp. 6) in this point.  
 Thus there exists a linear relation among them, which corresponds to the second generator of $J$. 
 Finally, the three cubics $s_1s_3^2$, $s_2^3$ and $s_5$  
 belong to the pencil of cubics with a cusp in $(0:1:0)$ whose principal tangent is $x_0=0$ 
 and which are osculating with multiplicity 3 to the cubic $x_0^2x_1+x_2^2$ in $(1:0:0)$.  
 The linear relation between them corresponds to the third generator of $J$. 
 \end{remark}
 
The procedure in the previous example 
allows to compute 
the ideal of relations of the Cox ring 
of all the extremal rational elliptic surfaces,
except for the surfaces $X_{8211}, X_{9111}$ 
and $X_{4422}$, where we are not able to identify the 
ideal $J'$ for computational reasons. 

\begin{theorem}
The Cox rings of all extremal rational elliptic surfaces 
are given in Table \ref{cox}.
\end{theorem}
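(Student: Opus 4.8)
The statement is the assertion that Table~\ref{cox} correctly records a presentation of $\R(X)$ for every extremal rational elliptic surface $X$. I would organize the proof as a case-by-case verification, split into three groups according to the tools available. First, for the four surfaces of complexity one ($X_{22}$, $X_{33}$, $X_{44}$, $X_{11}(a)$) the presentation has already been obtained in Section~\ref{c*} via the Orlik--Wagreich graph technique of~\cite{Hau}; for these, the content of the theorem is simply that the output of that computation agrees with the corresponding rows of Table~\ref{cox}, so I would just point back to the relevant subsections. Second, for the thirteen remaining surfaces other than $X_{8211}$, $X_{9111}$, $X_{4422}$, I would invoke Theorem~\ref{teo} to produce the generators and then run the procedure of Corollary~\ref{J} exactly as illustrated in the worked example for $X_{411}$.

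The heart of the argument, for each surface in the second group, consists of four steps carried out in order. (1) Write down explicitly the blow-up morphism $X\to\pp^2$ from the pencil of cubics in Table~\ref{equa}, identifying which of $e_1,\dots,e_9$ are reducible exceptional divisors and fixing the numbering of fiber components as in Table~\ref{int}; (2) list the distinguished sections given by Theorem~\ref{teo} and its corollary --- the defining sections of the negative curves, of a smooth fiber of each conic bundle with a unique reducible fiber, of $-K_X$ when $\pi$ has a unique reducible fiber, and the type (iv) divisors --- and record their classes in $\Cl(X)=\zz e_0\oplus\cdots\oplus\zz e_9$; (3) identify, for each generator $T_i$ coming from a curve not lying over the blown-up points, the plane curve $s_i$ whose proper transform it is, so that $K_T\cong\zz$ is generated by $e_0$ and $\sigma(T_i)=s_i$; (4) compute a generating set for the kernel $J$ of $\cc[T]\to\R(\pp^2)=\cc[x_0,x_1,x_2]$, extend-and-contract to obtain $J'\subset I(X)$ by Proposition~\ref{ideal}, and confirm $\dim J'=\rk\Cl(X)+2=12$ so that $J'=I(X)$ by Corollary~\ref{J}. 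Steps (1)--(3) are bookkeeping that can be read off Tables~\ref{equa}, \ref{int} and the geometry of the pencil; step (4) is the Magma computation of~\cite{Magma}, whose dimension check closes the case.

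For the three exceptional surfaces $X_{8211}$, $X_{9111}$, $X_{4422}$ the computation of $J'$ from $J$ is not feasible for the stated computational reasons, so here the table records a presentation obtained by other means --- essentially by producing enough relations ``by hand'' from geometric incidences among the relevant curves in $\pp^2$ (pencils of conics or cubics through the base points, as in the remark following the $X_{411}$ example) and checking that the resulting ideal is prime of the correct dimension $\rk\Cl(X)+2$, which forces it to equal $I(X)$. Thus for these three the verification is: exhibit the listed relations, check they lie in $I(X)$, and check the dimension and primality of the ideal they generate.

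\textbf{Main obstacle.} The conceptual content is entirely supplied by Theorem~\ref{teo} (generators) and Corollary~\ref{J} (relations), so the genuine difficulty is computational rather than theoretical: for each of the sixteen surfaces one must correctly set up the blow-up, the numbering of fiber components, and the dictionary between generators and plane curves, and then carry out and trust a Gröbner-basis computation --- in particular the equality $\dim J'=12$, which is what upgrades the a priori inclusion $J'\subseteq I(X)$ to an equality. The three surfaces $X_{8211}$, $X_{9111}$, $X_{4422}$ are where even this computational route breaks down, and handling them requires the extra input of guessing a correct relation ideal and verifying its primality and dimension directly; this is the step I expect to be the most delicate.
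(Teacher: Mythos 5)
Your plan for thirteen of the sixteen surfaces is exactly the paper's proof: complexity one via Section~\ref{c*}, the rest via Theorem~\ref{teo} for the generators and Corollary~\ref{J} for the relations, with the blow-up $X\to\pp^2$ taken from the pencils in Table~\ref{equa} and the sections $s_i$ listed explicitly; the dimension check $\dim J'=12$ is indeed what upgrades $J'\subseteq I(X)$ to equality.

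The divergence is in your treatment of $X_{8211}$, $X_{9111}$ and $X_{4422}$, and it rests on a misreading of what Table~\ref{cox} contains. For those three surfaces the table records \emph{only} the degree matrix of the generators, not an ideal of relations: the authors state explicitly (and the abstract repeats) that they are unable to identify $J'$ in these cases for computational reasons, so no presentation is claimed there and nothing beyond the generating set (which follows from Theorem~\ref{teo} and its corollary) needs to be verified. Your proposed substitute --- guessing enough relations from geometric incidences and then certifying primality and dimension of the resulting ideal --- is not carried out in the paper, and as a proof step it is a genuine gap: you give no mechanism for producing a provably complete set of relations, and the primality check you would need is essentially the same computation the authors report as infeasible. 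If you restate the theorem for these three surfaces as asserting only the generators and their degrees, your argument closes; as written, the final paragraph of your proposal promises more than it (or the paper) delivers.
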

\begin{proof}
In case the surface has complexity one, its Cox ring is computed 
in section \ref{c*}.
Otherwise, the Cox rings are computed by means of Theorem \ref{teo}
and Corollary \ref{J}.
In order to apply Corollary \ref{J}, we recall that here $Y=\pp^2$ 
and the birational morphism  $f:X\to Y$ is chosen to be 
the blow up of the base locus of one of the pencils of cubics given in 
Table \ref{equa}. The sections $s_1,\dots,s_m\in \cc[x_0,x_1,x_2]$ 
defining the homomorphism $\phi$ in Corollary \ref{J} are the following:
%for the extremal rational elliptic surfaces which have no $\cc^*$ action and 
%which are distict from $X_{8211}, X_{9111}$ 
%and $X_{4422}$:
\[
\begin{array}{c|l}
%X_{22} & x_3,\ x_2,\ x_1,\ x_2^3+x_1^2x_3\\
%X_{211} & x_3,\ x_2,\ x_1,\ x_2^3+x_1^2x_3+x_2^2x_3\\
X_{411} & x_1,\ x_2,\ x_0,\ x_0^2+x_1x_2,\ x_0^2x_1+x_2^3,\ x_0^2x_1+x_2^3+x_1^2x_2\\[3pt]
%X_{33} & x_3,\ x_1,\ x_1x_3-x_2^2,\ x_2\\
%X_{321} & x_3,\ x_1,\ x_1x_3-x_2^2+x_2x_3,\ x_2\\
%X_{44} & x_1,\ x_2,\ x_3,\ x_2-x_3\\
%X_{431} & x_1,\ x_2,\ x_3,\ x_1+x_2+x_3\\
X_{222} & x_0-x_1,\ x_0,\ x_1,\ x_1-x_2,\ x_1^3+2x_0x_1x_2-2x_1^2x_2-x_0x_2^2+x_1x_2^2,\\ [3pt]
	& x_1^3+2x_0x_1x_2-2x_1^2x_2-x_0x_2^2+x_1x_2^2+x_0x_1(x_0-x_1)\\[3pt]
X_{141} & x_0,\ x_1,\ x_0-x_1,\ x_2,\ x_0x_1-x_1^2+x_0x_2\\[3pt]
X_{6321} & x_0,\ x_1,\ x_2,\ x_0+x_2,\ x_0+x_1,\ x_1+x_2,\ x_0+x_1+x_2,\ x_0x_1+x_0x_2+x_1x_2 \\[3pt]
X_{5511} & x_0,\ x_2,\ x_1,\ x_0+x_1,\ x_0+x_1+x_2,\ x_1+x_2\\[3pt]
X_{3333} & x_2,\ x_1,\ x_0,\ x_0 + x_1 + x_2,\ x_0 + x_1 + \epsilon^2 x_2,\\
 &   x_0 + x_1 + \epsilon x_2,\ x_0 + \epsilon^2x_1 + x_2,\ x_0 + \epsilon^2x_1 + \epsilon^2x_2,\\
   & x_0 + \epsilon ^2x_1 + \epsilon x_2,\ x_0 + \epsilon x_1 + x_2,\ x_0 + \epsilon x_1 + \epsilon^2x_2,\ x_0 + \epsilon x_1 + \epsilon x_2
\end{array}
\]
where $\epsilon$ is a primitive cube root of unity.
\end{proof}
\begin{remark}
The ideal of relation of the surface $X_{3333}$ is given up to the action 
of its automorphism group $G$, the Hessian group \cite{AD}.
\end{remark}

\section{Applications}
Let $X$ be an extremal rational elliptic surface. Any contraction of $(-1)$-curves 
and $(-2)$-curves gives a birational morphism 
 $X\to X'$, where $X'$ is still a Mori dream space by \cite[Theorem 1.1]{O}.
Moreover, by \cite[Theorem 1.3.3]{ADHL}, the morphism $X\to X'$ 
comes from a toric ambient modification,
so that the Cox ring $\R(X')$ is obtained from $\R(X)$ by putting $T_i=1$ 
for any generator $T_i$ of $\R(X)$ 
which defines an exceptional divisor of the morphism.
This allows one to compute the Cox ring of several generalized 
(and possibly singular) del Pezzo surfaces.
We will give only two examples here.
\begin{example}
The Cayley's cubic surface $Y$ is the 
only cubic surface in $\pp^3$ 
with four ordinary double points:
\[
Y:\ \frac{1}{x_0}+\frac{1}{x_1}+\frac{1}{x_2}+\frac{1}{x_3}=0.
\]
The surface is constructed as follows.
Let $l_1,l_2,l_3,l_4$ be the defining
polynomials of four general lines of 
$\pp^2$ whose unique linear relation
is $l_1+l_2+l_3+l_4=0$. Then
the Cayley cubic $Y$ is the image of the
rational map $\pp^2\to\pp^3$ defined
by
\[
 (x_0:x_1:x_2)\mapsto(l_1l_2l_3:l_1l_2l_4:l_1l_3l_4:l_2l_3l_4).
\]
Indeed the products of three of the
above reducible cubics equal $l_i(l_1l_2l_3l_4)^2$,
for $i\in\{1,2,3,4\}$, and thus their sum
is zero by construction.
Hence the minimal resolution $\tilde{Y}$ of $Y$
is the blow-up of $\pp^2$ along the six 
points of intersection of the four lines.
In particular $\tilde{Y}$ contains four
$(-2)$-curves, corresponding to the four 
lines, and nine $(-1)$-curves corresponding
to the six intersection points plus the
strict transforms of three lines through
pairs of these points.

Contracting the three $(-1)$-curves of
$X_{6321}$ corresponding to the generators
$T_{10}$, $T_{12}$, $T_{14}$, with the notation 
in Table \ref{cox}, it is easy
to verify that one obtains a smooth surface
whose configuration of negative curves
is exactly the one of $\tilde{Y}$.
Hence the Cox ring of $\tilde{Y}$ is obtained by
that of $X_{6321}$ by putting equal to 
one the variables $T_{10}, T_{12}, T_{14}$.
In this case $\R(\tilde Y)$ has $13$ generators, which 
correspond to the negative curves, and the ideal 
of relations is
\[
I(\tilde Y)=\left\langle
\begin{array}{c}
    T_1T_{11}T_{15} + T_{3}T_{13}T_{17} - T_{4}T_{16},
   T_1T_9T_{15} + T_2T_{13}T_{16} - T_5T_{17},\\
   T_{2}T_{11}T_{16} + T_{3}T_{9}T_{17} - T_{6}T_{15},
   T_{1}T_{9}T_{11} + T_6T_{13} - T_7T_{16}T_{17},\\
   T_{3}T_{9}T_{13} + T_5T_{11} - T_7T_{15}T_{16},
   T_2T_{11}T_{13} + T_4T_9 - T_7T_{15}T_{17},\\
   T_1T_3T_9^2 + T_2T_7T_{16}^2 - T_5T_6,
   T_1T_2T_{11}^2 + T_3T_7T_{17}^2 - T_4T_6,\\
   T_1T_7T_{15}^2 + T_2T_3T_{13}^2 - T_{4}T_5
   \end{array}
\right\rangle
.\] 
The relations above are induced by the pencils of lines through $p_1,\dots,p_6$ and 
by the pencils of conics through four of the points (such that no three of them are collinear).
In fact each such pencil clearly contains three sections which are polynomials in the generators of the Cox ring.

To obtain $Y$ it is enough to further
contract the $(-2)$-curves corresponding
to $T_1,T_2,T_3, T_7$. 
%$P_1,3P_1,5P_1$, where $P_1$ is 
%a generator of the Mordell-Weil group, and the four $(-2)$-curves which 
%do not meet them.
%the three $(-1)$-curves are defined by %Thus the Cox ring of $Y$ is generated by 
%$T_4,T_5,T_6,T_8,T_9,T_{11},T_{13}, T_{15}, T_{16}, T_{17}$.
When putting equal to one the previous variables in the generators of $I(X)$, 
one equation contains a pure linear term in the variable $T_8$.
After eliminating the variable $T_8$, we obtain that 
$\R(Y)$  is generated by $T_4$,$T_5$,$T_6$,$T_9$,
$T_{11}$,$T_{13}$, $T_{15}$, $T_{16}$, $T_{17}$
and its ideal of relations is
\[ 
I(Y)=\left\langle 
\begin{array}{c}
 T_6T_{15} - T_9T_{17} - T_{11}T_{16},\
 T_6T_{13} + T_9T_{11} - T_{16}T_{17},\\
 T_4T_5 - T_{13}^2 - T_{15}^2,\  T_4T_9 + T_{11}T_{13} - T_{15}T_{17}\\ 
  T_{5}T_{17} - T_{9}T_{15} - T_{13}T_{16},\
   T_{5}T_{11} + T_{9}T_{13} - T_{15}T_{16},\\ 
    T_9T_{11}T_{15}T_{16} + T_9T_{13}T_{16}T_{17} + T_{11}T_{13}T_{16}^2 - 
        T_{15}T_{16}^2T_{17},\\
    T_5T_6 - T_9^2 - T_{16}^2,\ T_4T_6 - T_{11}^2 - T_{17}^2,\
    T_4T_{16} - T_{11}T_{15} - T_{13}T_{17}
 \end{array}
\right\rangle.
\]
Observe that the degrees of the generators of $\R(Y)$  
are given by the classes of $\deg(T_i)$ in the quotient 
$\Cl(X)/K\cong \zz^3\oplus \zz/2\zz$, where $K$ is the subgroup 
generated by the degrees of the variables which have been put equal to one.

%The minimal resolution $\tilde Y$ of $Y$ is the blow-up of the projective 
%plane at the six intersection points $p_1,\dots,p_6$ of four general lines.
\end{example}

\begin{example}
In \cite{De} Derenthal classified the generalized del Pezzo 
surfaces whose Cox ring has only one relation and described its Cox rings.
An easy check shows that, by contracting $(-1)$-curves on extremal rational elliptic surfaces, 
a surface for each of the types in \cite[Theorem 2]{De} can be obtained. 
This allows to give an alternative computation of their Cox ring in several cases. 
%all cases except for the del Pezzo of type $2A_3+A_1$ and $2A_2+A_1$, 
%which are dominated by the surface $X_{4422}$.
For example, it is known that there are only two del Pezzo surfaces $Y_0,Y_1$ 
of degree one and type $E_8$ up to isomorphism \cite{AN}.
These can be obtained by contracting the unique section in the surface $X_{22}$ 
and in the surface $X_{211}$ respectively.
Their Cox rings are generated by $T_1,\dots,T_{12}$ and the ideal of relations is:
\[
I(Y_\lambda)=\langle T_1T_3^2+T_2^3T_5^2T_6+T_4T_8T_9^2T_{10}^3T_{11}^4T_{12}^5+
\lambda T_1T_2^2T_5^2T_6^2T_7^2T_8^2T_9^2T_{10}^2T_{11}^2T_{12}^2\rangle
,\]
with $\lambda=0,1$.
\end{example}

\newpage
\section{The Cox rings}
\begin{table}[!h]\label{coxrings1}
%\begin{center}
\begin{tabular}{l|c}
Surface & Degree matrix and $I(X)$\\
\midrule
\multirow{2}{*}{$X_{22}$} & ${\tiny \left(
\begin{tabular}{cccc|ccccccccc} 
1 & 1 & 1& 3 & 0 & 0 & 0 & 0 & 0 & 0 & 0 & 0 & 0\\
-1& -1 & 0 & -1 &1 & 0 & 0 & 0 & 0 & 0 & 0 & 0 & 0\\
-1 & 0 & 0 & -1 &  -1& 1 & 0 & 0 & 0 & 0 & 0 & 0 & 0\\
-1 & 0 & 0 & -1 & 0 & -1 & 1 & 0 & 0 & 0 & 0 & 0 & 0\\
0 & 0 & 0 & -1  & 0 & 0  & -1 & 1& 0 & 0 & 0 & 0 & 0\\
0 & 0 & 0 & -1 & 0 & 0 & 0 & -1 & 1 & 0 & 0 & 0 & 0 \\
0 & 0 & 0 & -1 & 0 & 0 & 0 & 0 & -1 & 1 & 0 & 0 & 0  \\
0 & 0 & 0 & -1  & 0 & 0 & 0 & 0 & 0 & -1 & 1 & 0 & 0\\
0 & 0 & 0 & -1  & 0 & 0 & 0 & 0 & 0 & 0 & -1 & 1 & 0\\
0 & 0 & 0 & -1 & 0 & 0 & 0 & 0 & 0 & 0 & 0 & -1 & 1
\end{tabular}\right)}$ \\[40pt]
& ${\tiny
\begin{array}{l}
I(X)=\langle T_{1}T_{3}^2 + T_{2}^3T_{5}^2T_{6} - T_{4}T_{8}T_{9}^2T_{10}^3T_{11}^4T_{12}^5T_{13}^6\rangle
\end{array}}
$
\\
\midrule
 \multirow{2}{*}{$X_{211}$} & ${\tiny \left(
\begin{tabular}{cccc|ccccccccc} 
1 & 1 & 1& 3 & 0 & 0 & 0 & 0 & 0 & 0 & 0 & 0 & 0\\
-1& -1 & 0 & -1 &1 & 0 & 0 & 0 & 0 & 0 & 0 & 0 & 0\\
-1 & 0 & 0 & -1 &  -1& 1 & 0 & 0 & 0 & 0 & 0 & 0 & 0\\
-1 & 0 & 0 & -1 & 0 & -1 & 1 & 0 & 0 & 0 & 0 & 0 & 0\\
0 & 0 & 0 & -1  & 0 & 0  & -1 & 1& 0 & 0 & 0 & 0 & 0\\
0 & 0 & 0 & -1 & 0 & 0 & 0 & -1 & 1 & 0 & 0 & 0 & 0 \\
0 & 0 & 0 & -1 & 0 & 0 & 0 & 0 & -1 & 1 & 0 & 0 & 0  \\
0 & 0 & 0 & -1  & 0 & 0 & 0 & 0 & 0 & -1 & 1 & 0 & 0\\
0 & 0 & 0 & -1  & 0 & 0 & 0 & 0 & 0 & 0 & -1 & 1 & 0\\
0 & 0 & 0 & -1 & 0 & 0 & 0 & 0 & 0 & 0 & 0 & -1 & 1
\end{tabular}\right)}$ \\[40pt]
& ${\tiny
\begin{array}{l}
I(X)=\langle T_{1}T_{3}^2 + T_{2}^3T_{5}^2T_{6} - T_{4}T_{8}T_{9}^2T_{10}^3T_{11}^4T_{12}^5T_{13}^6+
T_{1}T_{2}^2T_{5}^2T_{6}^2T_{7}^2T_{8}^2T_{9}^2T_{10}^2T_{11}^2T_{12}^2T_{13}^2\rangle
\end{array}}
$
\\

\midrule

\multirow{2}{*}{$X_{411}$} & ${\tiny \left(
\begin{tabular}{cccccc|ccccccccc} 
1  & 1   & 1  & 2& 3& 3 & 0 & 0 & 0 & 0 & 0 & 0 & 0 & 0 & 0\\  
-1 & -1 & 0 & 0 & -1 & -1 & 0 & 0 & 0 & 0 & 0 & 0 & 0 & 0 & 1\\  
-1 & 0 & 0 & 0 & -1 & -1 & 0 & 0 & 0 & 0 & 0 & 0 & 0 & 1 & -1\\   
-1 &  0 &  0 & 0 & -1 &  -1& 0 & 0 & 0 & 0 & 0 & 0 & 1 & -1 & 0\\
0  & 0 &  0 &  0 & -1 & -1 & 0 & 0 & 0 & 0 & 0 & 1 & -1 & 0 & 0\\
 0 & 0 & 0 & 0 &-1 &-1 & 0 & 0 & 0 & 0 & 1 & -1 & 0 & 0 & 0\\
 0 &-1 &-1 &-1& -2& -1 & 0 & 0 & 0 & 1 & 0 & 0 & 0 & 0 & 0\\
 0 &-1  &0 &-1  &0 &-1 & 0 & 0 & 1 & -1 & 0 & 0 & 0 & 0 & 0\\
 0 & 0 & 0 & -1 & 0 & -1& 0 & 1 & -1 & 0 & 0 & 0 & 0 & 0 & 0\\
 0  &0 &  0  &-1 &  0  &-1& 1 & -1 & 0 & 0 & 0 & 0 & 0 & 0 & 0
\end{tabular}\right)}$ \\[40pt]
& {\tiny $I(X)=\left\langle
\begin{array}{c}
T_{7}^2T_{8}^3T_{9}^4T_{10}T_{11}^2T_{12}^3T_{13}^4T_{14}^4T_{15}^4T_{1}T_{2}^4  - T_{4}T_{5}+T_{3}^2T_{6},\\[2pt]
T_{7}^2T_{8}^3T_{9}^4T_{10}^2T_{14}T_{15}^2T_{2}^3 - T_{10}^2T_{11}T_{6} + T_{1}T_{4},\ 
T_{11}^2T_{12}^3T_{13}^4T_{14}^3T_{15}^2T_{1}^2T_{2}-T_{7}^2T_{8}T_{6} + T_{10}T_{5},\\[2pt]
 T_{7}^4T_{8}^4T_{9}^4T_{10}T_{14}T_{15}^2T_{2}^3 - T_{11}^2T_{12}T_{5} + T_{1}T_{3}^2,\ 
T_{11}^4T_{12}^4T_{13}^4T_{14}^3T_{15}^2T_{1}T_{2}	-T_{7}^2T_{8}T_{4} + T_{10}T_{3}^2\\ [2pt]
\end{array}
\right\rangle
$}
\\
\midrule

\multirow{2}{*}{$X_{9111}$} & ${\tiny \left(
\begin{tabular}{cccccccccc|ccccccccc} 
    1&1&1&2&2&2&3&4&4&4&0&0&0&0&0&0&0&0&0\\
  -1&0 &-1&0 &-1 &-1 &-1&0 &-1 &-2&1&0&0&0&0&0&0&0&0\\
   0&0 &-1&0&0 &-1 &-1&0 &-1 &-2 &-1&1&0&0&0&0&0&0&0\\
   0&0&0&0&0 &-1 &-1&0 &-1 &-2&0 &-1&1&0&0&0&0&0&0\\
  -1 &-1&0 &-1&0 &-1 &-1 &-2&0 &-1&0&0&0&1&0&0&0&0&0\\
  -1&0&0 &-1&0&0 &-1 &-2&0 &-1&0&0&0 &-1&1&0&0&0&0\\
   0&0&0 &-1&0&0 &-1 &-2&0 &-1&0&0&0&0 &-1&1&0&0&0\\
   0 &-1 &-1 &-1 &-1&0 &-1 &-1 &-2&0&0&0&0&0&0&0&1&0&0\\
   0 &-1&0&0 &-1&0 &-1 &-1 &-2&0&0&0&0&0&0&0 &-1&1&0\\
   0&0&0&0 &-1&0 &-1 &-1 &-2&0&0&0&0&0&0&0&0 &-1&1
\end{tabular}\right)}$ \\[40pt]
\midrule
\multirow{2}{*}{$X_{33}$} & ${\tiny \left(
\begin{tabular}{cccc|ccccccccc} 
1 & 1 & 2 & 1 & 0 & 0 & 0 & 0 & 0 & 0 & 0 & 0 & 0\\
-1 & 0 & -1 & -1 & 1 & 0 & 0 & 0 & 0 & 0 & 0 & 0 & 0 \\
-1 & 0 & -1 & 0 & -1 & 1 & 0 & 0 & 0 & 0 & 0 & 0 &0 \\
0 & 0 & -1 & 0 & 0 & -1 & 1 & 0 & 0 & 0 & 0 & 0 & 0\\
0 & 0 & -1 & 0 & 0 & 0 & -1 & 1 & 0 & 0 & 0 & 0 & 0\\
0 & 0 & -1 & 0 & 0 & 0 & 0 & -1 & 1 & 0 & 0 & 0 & 0\\
0 & 0 & -1 & 0 & 0 & 0 & 0 & 0 & -1 & 1 & 0 & 0 & 0\\
-1 & -1& 0 & 0 & 0 & 0 & 0 & 0 & 0 & 0 & 1 & 0 & 0\\
0 & -1 & 0 & 0 & 0 & 0 & 0 & 0 & 0 & 0 & -1 & 1 & 0\\
0 & -1 & 0 & 0 & 0 & 0 & 0 & 0 & 0 & 0 & 0 & -1 & 1\\
\end{tabular}\right)}$ \\[40pt]
& ${\tiny
\begin{array}{l}
I(X)=\langle  T_{1}T_{2}T_{11}^2T_{12}^3T_{13}^4 -T_{3}T_{7}T_{8}^2T_{9}^3T_{10}^4 -T_{4}^2T_{5} \rangle
\end{array}}
$
\end{tabular}
\end{table}

\begin{table}[h]\label{coxrings2}
\begin{tabular}{l|c}
Surface & Degree matrix and $I(X)$\\
\midrule
\multirow{2}{*}{$X_{321}$} & ${\tiny \left(
\begin{tabular}{cccc|ccccccccc} 
1 & 1 & 2 & 1 & 0 & 0 & 0 & 0 & 0 & 0 & 0 & 0 & 0\\
-1 & 0 & -1 & -1 & 1 & 0 & 0 & 0 & 0 & 0 & 0 & 0 & 0 \\
-1 & 0 & -1 & 0 & -1 & 1 & 0 & 0 & 0 & 0 & 0 & 0 &0 \\
0 & 0 & -1 & 0 & 0 & -1 & 1 & 0 & 0 & 0 & 0 & 0 & 0\\
0 & 0 & -1 & 0 & 0 & 0 & -1 & 1 & 0 & 0 & 0 & 0 & 0\\
0 & 0 & -1 & 0 & 0 & 0 & 0 & -1 & 1 & 0 & 0 & 0 & 0\\
0 & 0 & -1 & 0 & 0 & 0 & 0 & 0 & -1 & 1 & 0 & 0 & 0\\
-1 & -1& 0 & 0 & 0 & 0 & 0 & 0 & 0 & 0 & 1 & 0 & 0\\
0 & -1 & 0 & 0 & 0 & 0 & 0 & 0 & 0 & 0 & -1 & 1 & 0\\
0 & -1 & 0 & 0 & 0 & 0 & 0 & 0 & 0 & 0 & 0 & -1 & 1\\
\end{tabular}\right)}$ \\[40pt]
& ${\tiny
\begin{array}{l}
 I(X)=\langle T_{1}T_{2}T_{11}^2T_{12}^3T_{13}^4 -T_{3}T_{7}T_{8}^2T_{9}^3T_{10}^4 -T_{4}^2T_{5}+
  T_{1}T_{4}T_{5}T_{6}T_{7}T_{8}T_{9}T_{10}T_{11}T_{12}T_{13}\rangle
  \end{array}}
$
\\
\midrule

\multirow{2}{*}{$X_{8211}$} & ${\tiny \left(
\begin{tabular}{ccccccccc|ccccccccc} 
 1& 1 & 1 & 1  &2&2&2&3&3&0&0&0&0&0&0&0&0&0\\
-1&0 &-1 &-1&0&0&0 &-1 & -1&1&0&0&0&0&0&0&0&0\\
 0&0&0 & -1&0&0&0 &-1 &-1 &-1&1&0&0&0&0&0&0&0\\
-1 &-1&0&0 &-1&0 & -1 & -2 & -1&0&0&1&0&0&0&0&0&0\\
-1&0&0&0 &-1&0 &-1&0 &-1&0&0 &-1&1&0&0&0&0&0\\
 0&0&0&0 &-1&0 &-1&0 &-1&0&0&0 &-1&1&0&0&0&0\\
 0 &-1 &-1&0 &-1 &-1&0 &-1 &-2&0&0&0&0&0&1&0&0&0\\
 0&0 & -1&0 & -1 & -1&0 & -1&0&0&0&0&0&0 &-1&1&0&0\\
 0&0&0&0 &-1 & -1&0 &-1&0&0&0&0&0&0&0 &-1&1&0\\
 0 &-1&0 & -1&0 & -1 & -1&0&0&0&0&0&0&0&0&0&0&1\\
\end{tabular}\right)}$ \\[40pt]
 \\
\midrule

\multirow{2}{*}{$X_{44}$} & ${\tiny \left(
\begin{tabular}{cccc|ccccccccc} 
 1 & 1 & 1& 1 & 0 & 0 & 0 & 0 & 0 & 0 & 0 & 0 & 0\\
 -1 & -1 & 0 & 0 & 1 & 0 & 0 & 0 & 0 & 0 & 0 & 0 & 0\\
 0 & -1 & 0 & 0 & -1 & 1 & 0 & 0 & 0 & 0 & 0 & 0 & 0\\
 0 & -1 & 0 & 0 &0 & -1 & 1 & 0 & 0 & 0 & 0 & 0 & 0\\
 -1 & 0 & -1 & 0 & 0 & 0 & 0 & 1 & 0 & 0 & 0 & 0 & 0\\
 0 & 0 & -1 & 0 & 0 & 0 & 0 & -1 & 1 & 0 & 0 & 0 & 0\\
 0 & 0 & -1 & 0 & 0 & 0 & 0 & 0 & -1 & 1 & 0 & 0 & 0\\
 -1 & 0 & 0 & -1 & 0 & 0 & 0 & 0 & 0 & 0 & 1 & 0 & 0\\
 0 & 0 & 0 & -1 & 0 & 0 & 0 & 0 & 0 & 0 & -1 & 1 & 0\\
 0 & 0 & 0 & -1 & 0 & 0 & 0 & 0 & 0 & 0 & 0 & -1 & 1\\ 
\end{tabular}\right)}$ \\[40pt]
& ${\tiny
\begin{array}{l}
 I(X)=\langle T_{2}T_{5}T_{6}^2T_{7}^3 - T_{3}T_{8}T_{9}^2T_{10}^3 - T_{4}T_{11}T_{12}^2T_{13}^3\rangle
  \end{array}}
$
\\
%\midrule

\midrule
\multirow{2}{*}{$X_{431}$} & ${\tiny \left(
\begin{tabular}{cccc|ccccccccc} 
 1 & 1 & 1& 1 & 0 & 0 & 0 & 0 & 0 & 0 & 0 & 0 & 0\\
 -1 & -1 & 0 & 0 & 1 & 0 & 0 & 0 & 0 & 0 & 0 & 0 & 0\\
 0 & -1 & 0 & 0 & -1 & 1 & 0 & 0 & 0 & 0 & 0 & 0 & 0\\
 0 & -1 & 0 & 0 &0 & -1 & 1 & 0 & 0 & 0 & 0 & 0 & 0\\
 -1 & 0 & -1 & 0 & 0 & 0 & 0 & 1 & 0 & 0 & 0 & 0 & 0\\
 0 & 0 & -1 & 0 & 0 & 0 & 0 & -1 & 1 & 0 & 0 & 0 & 0\\
 0 & 0 & -1 & 0 & 0 & 0 & 0 & 0 & -1 & 1 & 0 & 0 & 0\\
 -1 & 0 & 0 & -1 & 0 & 0 & 0 & 0 & 0 & 0 & 1 & 0 & 0\\
 0 & 0 & 0 & -1 & 0 & 0 & 0 & 0 & 0 & 0 & -1 & 1 & 0\\
 0 & 0 & 0 & -1 & 0 & 0 & 0 & 0 & 0 & 0 & 0 & -1 & 1\\ 
\end{tabular}\right)}$ \\[40pt]
& ${\tiny
\begin{array}{l}
I(X)=\langle  T_{2}T_{5}T_{6}^2T_{7}^3 + T_{3}T_{8}T_{9}^2T_{10}^3 - T_{4}T_{11}T_{12}^2T_{13}^3+ 
  T_{1}T_{5}T_{6}T_{7}T_{8}T_{9}T_{10}T_{11}T_{12}T_{13} \rangle
   \end{array}}
$\\

\midrule

\multirow{2}{*}{$X_{222}$} & ${\tiny \left(
\begin{tabular}{cccccc|ccccccccc} 
 1 & 1 & 1 & 1 & 3 & 3 & 0 & 0 & 0 & 0 & 0 & 0 & 0 & 0 & 0\\
 -1 & -1 & -1 & 0 & -1 & -1 &0 & 0 & 0 & 0 & 1 & 0 & 0 & 0 & 0\\
 -1 & 0 & 0 & 0 & -1 & -1 &0 & 0 & 0 & 1 & -1 & 0 & 0 & 0 & 0\\
 -1 & 0 & 0 & 0 & -1 & -1 &0 & 0 & 1 & -1 & 0 & 0 & 0 & 0 & 0\\
 0 & 0 & 0 & 0 & -1 & -1 & 0 & 1 & -1 & 0 & 0 & 0 & 0 & 0 & 0\\
 0 & 0 & 0 & 0 & -1 & -1 & 1 & -1 & 0 & 0 & 0 & 0 & 0 & 0 & 0\\
 0 & 0 & -1 & -1 & -2 & -1 &0 & 0 & 0 & 0 & 0 & 0 & 1 & 0 & 0\\
 0 & 0 & -1 & 0 & 0 & -1 & 0 & 0 & 0 & 0 & 0 & 1 & -1 & 0 & 0\\
 0 & -1 & 0 & -1 & -1 & -2 & 0 & 0 & 0 & 0 & 0 & 0 & 0 & 0 & 1\\
 0 & -1 & 0 & 0 & -1 & 0 & 0 & 0 & 0 & 0 & 0 & 0 & 0 & 1 & -1\\
 \end{tabular}\right)}$ \\[40pt]
& {\tiny $I(X)=\left\langle
\begin{array}{c}
 T_{1}T_{7}^2T_{8}^2T_{9}^2T_{10} - T_{2}T_{14}^2T_{15} + T_{3}T_{12}^2T_{13},\ 
 T_{1}^2T_{4}^2T_{8}T_{9}^2T_{10} + T_{2}T_{5}T_{14}^2 - T_{3}T_{6}T_{12}^2,\\[2pt]
 T_{1}T_{4}^2T_{13} - T_{2}^2T_{3}T_{10}T_{11}^2T_{14}^2 + T_{6}T_{7}^2T_{8},\ 
 T_{1}T_{4}^2T_{15} - T_{2}T_{3}^2T_{10}T_{11}^2T_{12}^2 + T_{5}T_{7}^2T_{8},\\[2pt]
 T_{1}T_{2}T_{3}T_{8}T_{9}^2T_{10}^2T_{11}^2 + T_{5}T_{13} - T_{6}T_{15}\\[2pt]
 
\end{array}
\right\rangle
$}
\\
\midrule

\multirow{2}{*}{$X_{141}$} & ${\tiny \left(
\begin{tabular}{ccccc|cccccccccc} 
 1 & 1 & 1 & 1 & 2 & 0 & 0 & 0 & 0 & 0 & 0 & 0 & 0 & 0\\
 -1 & -1 & -1 & 0 & -1 &0 & 0 & 0 & 1 & 0 & 0 & 0 & 0 & 0\\
 -1 & 0 & 0 & 0 & -1 & 0 & 0 & 1 & -1 & 0 & 0 & 0 & 0 & 0\\
 0 & 0 & 0 & 0 & -1 & 0 & 1& -1 & 0 & 0 & 0 & 0 & 0 & 0\\ 
 0 & 0 & 0 & 0 & -1 & 1 & -1 & 0 & 0 & 0 & 0 & 0 & 0 & 0\\ 
 0 & -1 & 0 & -1 & -1 & 0 & 0 & 0 & 0 & 1 & 0 & 0 & 0 & 0\\
 0 & -1 & 0 & 0 & 0 & 0 & 0 & 0 & 0 & -1 & 1 & 0 & 0 & 0\\
 0 & 0 & -1 & -1 & -1 & 0 & 0 & 0 & 0 & 0 & 0 & 1 & 0 & 0\\
 0 & 0 & -1 & 0 & 0 & 0 & 0 & 0 & 0 & 0 & 0 & -1 & 1 & 0\\
 -1 & 0 & 0 & -1 & 0 & 0 & 0 & 0 & 0 & 0 & 0 & 0 & 0 & 1
 \end{tabular}\right)}$ \\[40pt]
& {\tiny $I(X)=\left\langle
\begin{array}{l}
T_{1}T_{4}T_{14}^2 + T_{2}T_{3}T_{9}T_{11}T_{13} - T_{5}T_{6}^2T_{7},\ 
T_{1}T_{6}T_{7}T_{8}T_{14} - T_{2}T_{10}T_{11}^2 - T_{3}T_{12}T_{13}^2
 
\end{array}
\right\rangle
$}

  \end{tabular}
%\end{center}

\end{table}

  \begin{table}[!h]\label{coxrings3}
\begin{tabular}{l|c}
Surface & Degree matrix and $I(X)$\\
  
\midrule
\multirow{2}{*}{$X_{6321}$} & ${\tiny \left(
\begin{tabular}{cccccccc|ccccccccc} 
1 & 1 & 1 & 1 & 1 & 1 & 1 & 2 & 0 & 0 & 0 & 0 & 0 & 0 & 0 & 0 & 0\\
-1 & 0 & -1 & -1 & 0 & 0 & 0 & -1 &1 & 0 & 0 & 0 & 0 & 0 & 0 & 0 & 0\\
0 & 0 & 0 & -1 & 0 & 0 & 0 & -1 & -1 & 1 & 0 & 0 & 0 & 0 & 0 & 0 & 0\\
-1& -1& 0 & 0 & -1 & 0 & 0 & -1 & 0 & 0 & 1 & 0 & 0 & 0 & 0 & 0 & 0\\
0 & 0 & 0 & 0 & -1 & 0 & 0 & -1 & 0 & 0 & -1 & 1 & 0 & 0 & 0 & 0 & 0\\
0 & -1 & -1 & 0 & 0 & -1 & 0 & -1 & 0 & 0 & 0 & 0 & 1 & 0 & 0 & 0 & 0\\
0 & 0 & 0 & 0 & 0 & -1 & 0 & -1 & 0 & 0 & 0 & 0 & -1 & 1 & 0 & 0 & 0\\
-1 & 0 & 0 & 0 & 0 & -1 & -1 & 0 & 0 & 0 & 0 & 0 & 0 & 0 & 1 & 0 & 0\\
0 & -1 & 0 & -1 & 0 & 0 & -1 & 0 & 0 & 0 & 0 & 0 & 0 & 0 & 0 & 1 & 0\\
0 & 0 & -1 & 0 &-1 & 0  &-1 & 0 & 0 & 0 & 0 & 0 & 0 & 0 & 0 & 0 & 1
   \end{tabular}\right)}$ \\[40pt]
& {\tiny $I(X)=\left\langle
\begin{array}{c}
T_{2}T_{11}T_{12}T_{16} + T_{3}T_{9}T_{10}T_{17} - T_{6}T_{14}T_{15},\
T_{1}T_{11}T_{12}T_{15} + T_{3}T_{13}T_{14}T_{17} - T_{4}T_{10}T_{16},\\[2pt]
T_{1}T_{9}T_{10}T_{15} + T_{2}T_{13}T_{14}T_{16} - T_{5}T_{12}T_{17},\
T_{1}T_{3}T_{9}T_{15}T_{17} + T_{2}T_{4}T_{16}^2 - T_{8}T_{12}T_{14},\\[2pt]
T_{1}T_{2}T_{11}T_{15}T_{16} + T_{3}T_{5}T_{17}^2 - T_{8}T_{10}T_{14},\
T_{3}T_{9}T_{10}T_{13}T_{14} + T_{5}T_{11}T_{12}^2 - T_{7}T_{15}T_{16},\\[2pt]
T_{1}T_{9}T_{10}T_{11}T_{12} + T_{6}T_{13}T{14}^2 - T_{7}T_{16}T_{17},\
T_{1}T_{6}T_{15}^2 + T_{2}T_{3}T_{13}T_{16}T_{17} - T_{8}T_{10}T_{12},\\[2pt]
T_{2}T_{11}T_{12}T_{13}T_{14} + T_{4}T_{9}T_{10}^2 - T_{7}T_{15}T_{17},\
T_{1}^2T_{9}T_{11}T_{15}^2 - T_{4}T_{5}T_{16}T_{17} + T_{8}T_{13}T_{14}^2,\\[2pt]
T_{1}T_{2}T_{11}^2T_{12}^2 + T_{3}T_{7}T_{17}^2 - T_{4}T_{6}T_{10}T_{14},\
T_{3}^2T_{9}T_{13}T_{17}^2 - T_{4}T_{6}T_{15}T_{16} + T_{8}T_{11}T_{12}^2,\\[2pt]
 T_{1}T_{7}T_{15}^2 + T_{2}T_{3}T_{13}^2T_{14}^2 - T_{4}T_{5}T_{10}T_{12},\
 T_{1}T_{3}T_{9}^2T_{10}^2 + T_{2}T_{7}T_{16}^2 - T_{5}T_{6}T_{12}T_{14},\\[2pt]
 T_{2}^2T_{11}T_{13}T_{16}^2 - T_{5}T_{6}T_{15}T_{17} + T_{8}T_{9}T_{10}^2,\
 T_{1}T_{2}T_{3}T_{9}T_{11}T_{13} + T_{4}T_{5}T_{6} - T_{7}T_{8}\\[2pt]

 \end{array}
\right\rangle
$}
\\
\midrule

\multirow{2}{*}{$X_{11}(a)$} & ${\tiny \left(
\begin{tabular}{ccccc|ccccccccc} 
 1 & 1 & 1 & 1 & 1 & 0 & 0 & 0 & 0 & 0 & 0 & 0 & 0 & 0\\
 -1 & 0 & 0 & -1 & 0&1 & 0 & 0 & 0 & 0 & 0 & 0 & 0 & 0 \\
 -1 & 0 & 0 & 0 & 0 &-1 & 1 & 0 & 0 & 0 & 0 & 0 & 0 & 0\\
 0 & -1 & 0 & -1 & 0 & 0 & 0 & 1 & 0 & 0 & 0 & 0 & 0 & 0\\
 0 & -1 & 0 & 0 & 0  & 0 & 0 & -1 & 1 & 0 & 0 & 0 & 0 & 0\\
 0 & 0 & -1 & -1 & 0 & 0 & 0 & 0 & 0 & 1 & 0 & 0 & 0 & 0\\
 0 & 0 & -1 & 0 & 0  & 0 & 0 & 0 & 0 & -1 & 1 & 0 & 0 & 0\\
 -1 & -1 & -1 & 0 & -1 & 0 & 0 & 0 & 0 & 0 & 0 & 1 & 0 & 0\\
 0 & 0 & 0 & 0 & -1 & 0 & 0 & 0 & 0 & 0 & 0 & -1 & 1 & 0\\
 0 & 0 & 0 & 0 & -1 & 0 & 0 & 0 & 0 & 0 & 0 & 0 & -1 & 1\\
\end{tabular}\right)}$ \\[40pt]
& {\tiny $I(X)=\left\langle
\begin{array}{l}
(a-1)T_{2}T_{8}T_{9}^2 - T_{3}T_{10}T_{11}^2 + T_{5}T_{13}T_{14}^2,\
 (a-1)T_{1}T_{6}T_{7}^2 - aT_{3}T_{10}T_{11}^2 + T_{5}T_{13}T_{14}^2
   \end{array}
\right\rangle$}
\\
\midrule

  \multirow{2}{*}{$X_{5511}$} & ${\tiny \left(
\begin{tabular}{cccccc|ccccccccccc} 
 1 & 1 & 1 & 1& 1 & 1  & 0 & 0 & 0 & 0 & 0 & 0 & 0 & 0 & 0\\
 -1 & 0 & -1 & -1 & 0 & 0 &  0 & 1 & 0 & 0 & 0 & 0 & 0 & 0 & 0\\
 0 & 0 & 0 &-1 & 0 & 0  & 1 & -1 & 0 & 0 & 0 & 0 & 0 & 0 & 0\\
 -1 & 0 & 0 & 0 & -1 & -1 &  0 & 0 & 0 & 1 & 0 & 0 & 0 & 0 & 0\\
 -1 & 0 & 0 & 0 & 0 & 0 &  0 & 0 & 1 & -1 & 0 & 0 & 0 & 0 & 0\\
 0 & -1 & 0 & -1 & -1 & 0 &  0 & 0 & 0 & 0 & 0 & 1 & 0 & 0 & 0\\
 0 & -1 & 0 & 0 & 0 & 0 &  0 & 0 & 0 & 0 & 1 & -1 & 0 & 0 & 0\\
 0 & -1 & -1 & 0 & 0 & -1 &  0 & 0 & 0 & 0 & 0 & 0 & 0 & 1 & 0\\
 0 & 0 & 0 & 0 & 0 & -1 &  0 & 0 & 0 & 0 & 0 & 0 & 1 & -1 & 0\\
 0 & 0 & -1 & 0 & -1 & 0 &  0 & 0 & 0 & 0 & 0 & 0 & 0 & 0 & 1
   \end{tabular}\right)}$ \\[40pt]
& {\tiny $I(X)=\left\langle
\begin{array}{c}
 T_{2}T_{11}T_{13}T_{14} + T_{4}T_{7}^2T_{8} - T_{5}T_{9}T_{10}T_{15},\ 
 T_{2}T_{11}^2T_{12} + T_{3}T_{7}T_{8}T_{15} - T_{6}T_{9}T_{10}T_{13},\\[2pt]
 T_{1}T_{9}^2T_{10} + T_{3}T_{13}T_{14}T_{15} - T_{4}T_{7}T_{11}T_{12},\
 T_{1}T_{7}T_{8}T_{9} - T_{5}T_{11}T_{12}T_{15} + T_{6}T_{13}^2T_{14},\\[2pt]
 T_{1}T_{2}T_{9}T_{11} + T_{3}T_{5}T_{15}^2 - T_{4}T_{6}T_{7}T_{13}\\[2pt]

 \end{array}
\right\rangle
$}
\\
\midrule

 \multirow{2}{*}{$X_{4422}$} & ${\tiny \left(
\begin{tabular}{ccccccccccc|ccccccccc} 
 1&1&1&1&1&1&1&1&2&2&1&0&0&0&0&0&0&0&0&0\\
-1&0 &-1&0&0 &-1&0&0 &-1 & -1 & -1&1&0&0&0&0&0&0&0&0\\
 0&0&0&0&0 &-1&0&0 &-1 & -1&0 &-1&1&0&0&0&0&0&0&0\\
 0 &-1&0 &-1 &-1&0&0&0 &-1 &-1 & -1&0&0&1&0&0&0&0&0&0\\
 0 &-1&0&0&0&0&0&0 &-1 &-1&0&0&0 &-1&1&0&0&0&0&0\\
-1&0&0 &-1&0&0 &-1&0&0 &-1&0&0&0&0&0&1&0&0&0&0\\
-1&0&0&0 &-1&0&0 &-1 &-1&0&0&0&0&0&0&0&1&0&0&0\\
 0 &-1&0&0&0 &-1 &-1 & -1&0&0&0&0&0&0&0&0&0&1&0&0\\
 0&0 &-1 &-1&0&0&0 &-1 &-1&0&0&0&0&0&0&0&0&0&1&0\\
 0&0 &-1&0 &-1&0 &-1&0&0 &-1&0&0&0&0&0&0&0&0&0&1\\
   \end{tabular}\right)}$ \\[40pt]
 \\
\midrule

\multirow{2}{*}{$X_{3333}$} & ${\tiny \left(
\begin{tabular}{cccccccccccc|ccccccccc} 
 1 & 1 & 1 & 1& 1 & 1 & 1 & 1 & 1 & 1 & 1 & 1 & 0 & 0 & 0 & 0 & 0 & 0 & 0 & 0 & 0 \\
 0 & 0 & -1 & -1 & 0 & 0 & 0 & -1 & 0 & 0 & 0 & -1 & 1 & 0 & 0 & 0 & 0 & 0 & 0 & 0 & 0\\
 0 & 0 & -1 & 0 & 0 &-1 & -1 & 0 & 0 & 0 & -1 & 0 & 0 & 1 & 0 & 0 & 0 & 0 & 0 & 0 & 0\\
 0 & 0 & -1 & 0 & -1 & 0 & 0 & 0 & -1 & -1 & 0 & 0 & 0 & 0 & 1 & 0 & 0 & 0 & 0 & 0 & 0\\
 0 & -1 & 0 & -1 & 0 & 0 &-1 & 0 & 0 & -1 & 0  & 0 & 0 & 0 & 0 & 1 & 0 & 0 & 0 & 0 & 0\\
 0 & -1 & 0 & 0 & 0 & -1&0 & 0 & -1 & 0 & 0 & -1 & 0 & 0 & 0 & 0 & 1 & 0 & 0 & 0 & 0\\
 0 & -1 & 0 & 0 & -1 & 0 & 0 & -1 & 0 & 0 & -1 & 0& 0 & 0 & 0 & 0 & 0 & 1 & 0 & 0 & 0\\
 -1 & 0 & 0 & -1 & -1 & -1 & 0 & 0 & 0 & 0 & 0 & 0& 0 & 0 & 0 & 0 & 0 & 0 & 1 & 0 & 0\\
 -1 & 0 & 0 & 0 & 0 & 0 & 0 & 0 & 0 & -1 & -1 & -1& 0 & 0 & 0 & 0 & 0 & 0 & 0 & 1 & 0\\
 -1 & 0 & 0 & 0 & 0 & 0 & -1 & -1 & -1 & 0 & 0 & 0& 0 & 0 & 0 & 0 & 0 & 0 & 0 & 0 & 1 
\end{tabular}\right)}$ \\[40pt]
& {\tiny $I(X)=\left\langle
\begin{array}{c}
 (-2\epsilon - 1)T_1T_{20}T_{21} - T_{5}T_{15}T_{18} + T_{6}T_{14}T_{17}\\
 (\epsilon - 1)T_1T_5T_{19}^2 + T_8T_{10}T_{13}T_{16} - T_9T_{11}T_{14}T_{17}\\
 T_1T_2T_3 + 1/9(-\epsilon + 1)T_4T_9T_{11} + 1/9(\epsilon - 1)T_5T_7T_{12}\\
\end{array}
\right\rangle^G
$}
\end{tabular}

%\hrule
\vspace{0.5cm}
\caption{The Cox rings of extremal rational elliptic surfaces}\label{cox}
\end{table}

\clearpage

\begin{bibdiv}
\begin{biblist}
\bib{AN}{article}{
AUTHOR={Alexeev, Valery},
AUTHOR={Nikulin, Viacheslav V.},
  TITLE = {Del {P}ezzo and {$K3$} surfaces},
    SERIES = {MSJ Memoirs},
    VOLUME = {15},
 PUBLISHER = {Mathematical Society of Japan},
   ADDRESS = {Tokyo},
      YEAR = {2006},
     PAGES = {xvi+149},
}

\bib{AD}{article}{
    AUTHOR = {Artebani, Michela},
    AUTHOR={Dolgachev, Igor},
     TITLE = {The {H}esse pencil of plane cubic curves},
   JOURNAL = {Enseign. Math. (2)},
  FJOURNAL = {L'Enseignement Math\'ematique. Revue Internationale. 2e
              S\'erie},
    VOLUME = {55},
      YEAR = {2009},
    NUMBER = {3-4},
     PAGES = {235--273},
}

\bib{ArLa}{article}{
    AUTHOR = {Artebani, Michela},
    AUTHOR = {Laface, Antonio},
     TITLE = {Cox rings of surfaces and the anticanonical {I}itaka
              dimension},
   JOURNAL = {Adv. Math.},
  FJOURNAL = {Advances in Mathematics},
    VOLUME = {226},
      YEAR = {2011},
    NUMBER = {6},
     PAGES = {5252--5267},
      ISSN = {0001-8708},
     CODEN = {ADMTA4},
   MRCLASS = {14C20 (14J26)},
  MRNUMBER = {2775900},
       DOI = {10.1016/j.aim.2011.01.007},
       URL = {http://dx.doi.org/10.1016/j.aim.2011.01.007},
}

\bib{ADHL}{book}{
AUTHOR = {Arzhantsev, Ivan},
AUTHOR = {Derenthal, Ulrich}
AUTHOR = {Hausen, J\"urgen},
AUTHOR = {Laface, Antonio},
TITLE = {Cox rings}  
URL={http://www.mathematik.uni-tuebingen.de/~hausen/CoxRings/download.php?name=coxrings.pdf},
}

\bib{BP}{book}{
    AUTHOR = {Barth, Wolf P.}
    AUTHOR = {Hulek, Klaus}
    AUTHOR = {Peters, Chris A. M.}
    AUTHOR = {Van de Ven, Antonius},
     TITLE = {Compact complex surfaces},
    SERIES = {Ergebnisse der Mathematik und ihrer Grenzgebiete. 3. Folge. A
              Series of Modern Surveys in Mathematics [Results in
              Mathematics and Related Areas. 3rd Series. A Series of Modern
              Surveys in Mathematics]},
    VOLUME = {4},
   EDITION = {Second},
 PUBLISHER = {Springer-Verlag},
   ADDRESS = {Berlin},
      YEAR = {2004},
     PAGES = {xii+436},
      ISBN = {3-540-00832-2},
   MRCLASS = {14Jxx (14-02 32-02 32J15 57R57)},
  MRNUMBER = {2030225 (2004m:14070)},
MRREVIEWER = {I. Dolgachev},
}

\bib{BHK}{article}{
AUTHOR = {B\"aker, Hendrik},
AUTHOR = {Hausen, J\"urgen},
AUTHOR = {Keicherm Simon},
TITLE = {On {C}how quotients of torus actions},
 JOURNAL = {arXiv:1203.3759v1}
    EPRINT = {http://arxiv.org/pdf/1203.3759.pdf},
}

\bib{Magma}{article}{
    AUTHOR = {Bosma, Wieb},
    AUTHOR = {Cannon, John},
    AUTHOR = {Playoust, Catherine},
     TITLE = {The {M}agma algebra system. {I}. {T}he user language},
      NOTE = {Computational algebra and number theory (London, 1993)},
   JOURNAL = {J. Symbolic Comput.},
    VOLUME = {24},
      YEAR = {1997},
    NUMBER = {3-4},
     PAGES = {235--265}
}

\bib{De}{article}{
AUTHOR = {Derenthal, Ulrich},
TITLE= {Singular del Pezzo surfaces whose universal torsors are hypersurfaces},
JOURNAL= {arXiv:math/0604194},
EPRINT = {http://arxiv.org/abs/math.AG/0604194},
}

\bib{Du}{article}{
    AUTHOR = {Duistermaat, Johannes J.},
     TITLE = {Discrete integrable systems},
    SERIES = {Springer Monographs in Mathematics},
      NOTE = {QRT maps and elliptic surfaces},
 PUBLISHER = {Springer},
   ADDRESS = {New York},
      YEAR = {2010},
     PAGES = {xxii+627},
      ISBN = {978-1-4419-7116-6},
   MRCLASS = {37P55 (14H70 14J27)},
  MRNUMBER = {2683025},
       DOI = {10.1007/978-0-387-72923-7},
       URL = {http://dx.doi.org/10.1007/978-0-387-72923-7},
}

\bib{Har}{article}{
    AUTHOR = {Harbourne, Brian},
     TITLE = {Anticanonical rational surfaces},
   JOURNAL = {Trans. Amer. Math. Soc.},
  FJOURNAL = {Transactions of the American Mathematical Society},
    VOLUME = {349},
      YEAR = {1997},
    NUMBER = {3},
     PAGES = {1191--1208},
      ISSN = {0002-9947},
     CODEN = {TAMTAM},
   MRCLASS = {14C20 (14J26 14M20 14N05)},
  MRNUMBER = {1373636 (97f:14007)},
MRREVIEWER = {Luis Giraldo},
       DOI = {10.1090/S0002-9947-97-01722-4},
       URL = {http://dx.doi.org/10.1090/S0002-9947-97-01722-4},
}

\bib{Ha}{book}{
    AUTHOR = {Hartshorne, Robin},
     TITLE = {Algebraic geometry},
      NOTE = {Graduate Texts in Mathematics, No. 52},
 PUBLISHER = {Springer-Verlag},
   ADDRESS = {New York},
      YEAR = {1977},
     PAGES = {xvi+496},
      ISBN = {0-387-90244-9},
   MRCLASS = {14-01},
  MRNUMBER = {0463157 (57 \#3116)},
MRREVIEWER = {Robert Speiser},
}

\bib{Hau}{article}{
    AUTHOR = {Hausen, J\"urgen},
     TITLE = {Three lectures on Cox rings},
   JOURNAL = {arXiv:1106.0854v2}
    EPRINT = {http://arxiv.org/pdf/1106.0854v2.pdf},
}

\bib{K}{article}{
    AUTHOR = {Kodaira, Kunihiko},
     TITLE = {On compact analytic surfaces. {I}, {II}, {III}},
   JOURNAL = {Ann. of Math.  {\bf 71} (1960), 111--152; {\bf 77} (1963), 563--626; {\bf 78} (1963), 1--40.},
  FJOURNAL = {Annals of Mathematics. Second Series},
  %  VOLUME = {78},
    %  YEAR = {1963},
    % PAGES = {1--40},
      ISSN = {0003-486X},
   MRCLASS = {32.44 (14.20)},
  MRNUMBER = {0184257 (32 \#1730)},
MRREVIEWER = {M. F. Atiyah},
}

\bib{M}{book}{
    AUTHOR = {Miranda, Rick},
     TITLE = {The basic theory of elliptic surfaces},
    SERIES = {Dottorato di Ricerca in Matematica. [Doctorate in Mathematical
              Research]},
 PUBLISHER = {ETS Editrice},
   ADDRESS = {Pisa},
      YEAR = {1989},
     PAGES = {vi+108},
   MRCLASS = {14J27},
  MRNUMBER = {1078016 (92e:14032)},
MRREVIEWER = {Ulf Persson},
}

\bib{MiPe}{article}{
    AUTHOR = {Miranda, Rick},
    AUTHOR = {Persson, Ulf},
     TITLE = {On extremal rational elliptic surfaces},
   JOURNAL = {Math. Z.},
  FJOURNAL = {Mathematische Zeitschrift},
    VOLUME = {193},
      YEAR = {1986},
    NUMBER = {4},
     PAGES = {537--558},
      ISSN = {0025-5874},
     CODEN = {MAZEAX},
   MRCLASS = {14J27 (14J26)},
  MRNUMBER = {867347 (88a:14044)},
MRREVIEWER = {David R. Morrison},
       DOI = {10.1007/BF01160474},
       URL = {http://dx.doi.org/10.1007/BF01160474},
}

\bib{O}{article}{
AUTHOR = {Okawa, Shinnosuke},
TITLE={On images of Mori dream spaces},
JOURNAL = {arXiv:1104.1326}
    EPRINT = {http://arxiv.org/abs/1104.1326},
}

\bib{SS}{book}{ 
    AUTHOR = {Sch{\"u}tt , Matthias},  
    AUTHOR= {Shioda, Tetsuji},
     TITLE = {Elliptic surfaces},
 BOOKTITLE = {Algebraic geometry in {E}ast {A}sia---{S}eoul 2008},
    SERIES = {Adv. Stud. Pure Math.},
    VOLUME = {60},
     PAGES = {51--160},
 PUBLISHER = {Math. Soc. Japan},
   ADDRESS = {Tokyo},
      YEAR = {2010},
   MRCLASS = {14J27 (11G05 14J20 14J28)},
  MRNUMBER = {2732092 (2012b:14069)},
MRREVIEWER = {I. Dolgachev},
}

\bib{Shio}{article}{
    AUTHOR = {Shioda, Tetsuji},
     TITLE = {On the {M}ordell-{W}eil lattices},
   JOURNAL = {Comment. Math. Univ. St. Paul.},
  FJOURNAL = {Commentarii Mathematici Universitatis Sancti Pauli},
    VOLUME = {39},
      YEAR = {1990},
    NUMBER = {2},
     PAGES = {211--240},
      ISSN = {0010-258X},
     CODEN = {COMAAC},
   MRCLASS = {14J27 (11G05 14C17 14D10 14J05 14J20)},
  MRNUMBER = {1081832 (91m:14056)},
MRREVIEWER = {Joseph H. Silverman},
}

\end{biblist}
\end{bibdiv}

\end{document}